\newtheoremstyle{results}% name
  {1ex}%      Space above
  {1ex}%      Space below
  {\itshape}%         Body font
  {}%         Indent amount (empty = no indent, \parindent = para indent)
  {\bfseries}% Thm head font
  {.}%        Punctuation after thm head
  {.5em}%     Space after thm head: " " = normal interword space;
\theoremstyle{results}
	\newtheorem{Sat}{Satz}[section]
	\newtheorem{Thm}[Sat]{Theorem}
	\newtheorem{DefThm}[Sat]{Definition and Theorem}
	\newtheorem{Lem}[Sat]{Lemma}
	\newtheorem{Pro}[Sat]{Proposition}
	\newtheorem{Cor}[Sat]{Corollary}
	\newtheorem{Def}[Sat]{Definition}
	\newtheorem{Ass}[Sat]{Assumption}	
\newtheoremstyle{remarks}% name
  {1ex}%      Space above
  {1ex}%      Space below
  {}%         Body font
  {}%         Indent amount (empty = no indent, \parindent = para indent)
  {\itshape}% Thm head font
  {.}%        Punctuation after thm head
  {.5em}%     Space after thm head: " " = normal interword space;
\theoremstyle{remarks}
	\newtheorem{Ex}[Sat]{Example}
	\newtheorem{Rem}[Sat]{Remark}
	\newcommand{\FA}{\mathcal{F}}
	\newcommand{\aFA}{\mathcal{F}^\Lambda}
	\newcommand{\WM}{\mathbb{P}}
	\newcommand{\NZ}{\mathbb{N}}
	\newcommand{\stm}{{\mathcal{S}^\Lambda}}
	\newcommand{\stmd}{{{\mathcal{S}^{\Lambda,\mathrm{div}}}}}	
	\newcommand{\stmg}[1]{\stm\left([#1,\infty]\right)}
	\newcommand{\st}{{\mathcal{S}^\mathcal{O}}}
	\newcommand{\stp}{{\mathcal{S}^\mathcal{P}}}
	\newcommand{\midG}{\,\middle|\,}
	\newcommand{\stsetRO}[2]{\left\llbracket #1,#2 \right\llbracket}
	\newcommand{\stsetO}[2]{\left\rrbracket #1,#2 \right\llbracket}
	\newcommand{\stsetG}[1]{\left\llbracket #1 \right\rrbracket}
	\newcommand{\lsl}[1]{{^\ast #1}}
	\newcommand{\lsr}[2]{{#1^\ast_{#2}}}
	\newcommand{\citing}[3]{\cite{#1}, #2, p.#3}
\begin{document}
%%%%%%%%%%%%%%%%%%%%%%%%%%%%%%%%%%%%%%%%%%%%%%%%%%%%%%%%%%%%%%%%%%%

\makeatletter
\let\@fnsymbol\@arabic
\makeatother

\title{\vspace*{-3cm} On Lenglart's Theory of Meyer-$\sigma$-fields and El Karoui's Theory of Optimal Stopping}

\author{ Peter Bank\footnote{Technische Universit{\"a}t Berlin,
    Institut f{\"u}r Mathematik, Stra{\ss}e des 17. Juni 136, 10623
    Berlin, Germany, email \texttt{bank@math.tu-berlin.de}. }
  \hspace{4ex} David Besslich\footnote{Technische Universit{\"a}t
    Berlin, Institut f{\"u}r Mathematik, Stra{\ss}e des 17. Juni 136,
    10623 Berlin, Germany, email \texttt{besslich@math.tu-berlin.de}.}  }
\date{\today}

\maketitle
\begin{abstract}
	We summarize the general results
	of \cite{EK81} on optimal stopping problems for 
	processes which are measurable with respect to Meyer-$\sigma$-fields.
	Meyer-$\sigma$-fields are due to \cite{EL80} and include the optional and predictable $\sigma$-field as
	special cases. Novel contributions of our work
	are path regularity results for Meyer measurable processes and limit results for 
	Meyer-projections. We also clarify a minor issue in the proof of El Karoui's optimality result.
	These extensions were inspired and needed for the proof of a stochastic representation theorem in~\cite{BB18}.
	As an application of this theorem, we provide an alternative approach to optimal stopping in the spirit of \cite{BF03}.
\end{abstract}

\begin{description}
\item[Mathematical Subject Classification (2010):] 60G40, 60G07
\item[Keywords:] Optimal stopping problems, Snell envelope, 
Meyer-$\sigma$-fields
\end{description}

%%%%%%%%%%%%%%%%%%%%%%%%%%%%%%%%%%%%%%%%%%%%%%%%%%%%%%%%%%%%%%%%%%%%%%
\tableofcontents

\section{Introduction}

In recent work on stochastic optimal control problems (\cite{BB18_3}), we found it useful to model information flows that allow one to interpolate between predictable and optional controls. In fact, such a situation is perfectly natural when exogenous shocks are pre-announced by possibly noisy signals. For continuous-time models, Meyer-$\sigma$-fields turn out to be the tool of choice to capture such phenomena in a most flexible and rigorous manner. 

Meyer $\sigma$-fields where introduced by \cite{EL80} and used in \cite{EK81} for a most general theory of optimal stopping. This paper gives a survey of both of these papers, giving a condensed account of the main concepts and results we find most important for our optimal control purposes. Moreover, it provides supplementary results to both Lenglart's and El Karoui's theory which shed extra light on the regularity properties of Meyer-measurable processes and allow us to clarify minor issues in the proof of El Karoui's optimal stopping result. 

From Lenglart's theory, we recall the definition of Meyer-$\sigma$-fields, discuss their completion and how they are embedded between predictable and optional fields. Also, accessible and totally inaccessible Meyer-stopping times are discussed as are the fundamental tools provided by Meyer's section and projection theorems. 

Our survey of El Karoui's theory of optimal stopping starts with a discussion of Meyer-supermartingales and the structure of Snell envelopes. We explain how she relaxes the optimal stopping problem by introducing divided stopping times and show how this relaxation ensures existence of optimizers. To round off this survey, we recall how the results simplify in the classically considered optional case.

Apart from this survey section, we will clarify a minor issue arising in the analysis in \cite{EK81}. Moreover, we give extensions
to existing results concerning the testing of path properties for Meyer-measurable
processes and a result concerning the right- and left-upper-semicontinuous
envelopes of Meyer-projections. Those extensions are not just for the sake
of mathematical generality, but were inspired and needed in our paper \cite{BB18},
where we prove some extension of a representation result for stochastic processes from \cite{BK04}. This result in turn is used in \cite{BB18_3} to construct solutions to irreversible investment problems for which Meyer $\sigma$-fields offer a novel way to model information dynamics. 

For another application, we show how the solution to the representation theorem of \cite{BB18} can be used as a universal stopping signal which allows one to characterize optimal divided stopping times for a suitable parametric family of optimal stopping problems.

The rest of the paper is organized as follows. In Section \ref{sec:meyer} 
we follow \cite{EK81} to give a summary of the key results on Meyer-$\sigma$-fields 
from \cite{EL80}. In Section 
\ref{sec:optstop} we state the results of \cite{EK81}
 concerning optimal stopping problems
and in Section \ref{sec:extension}
 we state our extensions and clarifications on \cite{EK81}. Section \ref{sec:rep} studies the universal signals for optimal stopping as specified by a solution to a representation problem.

%%%%%%%%%%%%%%%%%%%%%%%%%%%%%%%%%%%%%%%%%%%%%%%%%%%%%%%%%%%%%%%%%%%%%%
%%%%%%%%%%%%%%%%%%%%%%%%%%%%%%%%%%%%%%%%%%%%%%%%%%%%%%%%%%%%%%%%%%%%%%
\section{Survey of results on Meyer-$\sigma$-fields and general optimal stopping}

This section gives a condensed account of the results from Lenglart's general theory of Meyer-$\sigma$-fields and El Karoui's general theory of optimal stopping that we found most useful for our own work in the companion papers \cite{BB18, BB18_3}. 

\subsection{Lenglart's theory of Meyer-$\sigma$-fields}\label{sec:meyer}

	\cite{EL80}
	gives a very thorough and comprehensive account of the theory of Meyer- $\sigma$-fields. Let us recall some of his results by following the outline given in the introduction of \cite{EK81}, p.118-121.  
	
	\subsubsection{Basic definition, examples and characterization result}
	
	We will start with the definition of Meyer-$\sigma$-fields and
	some fundamental examples.
	
	\begin{Def}[\citing{EL80}{Definition 2}{502}]
			\label{meyer_def_1}
				A $\sigma$-field $\Lambda$ on $\Omega\times [0,\infty)$
				is called a \emph{Meyer-$\sigma$-field},
				if the following conditions hold:
				\begin{enumerate}[(i)]
					\item It is generated by some right-continuous, left-limited
					(rcll or c\`adl\`ag in short) processes.
					\item It contains 
					$\{\emptyset,\Omega\}\times \mathcal{B}([0,\infty))$,
					where $\mathcal{B}([0,\infty))$
					denotes the Borel-$\sigma$-field on $[0,\infty)$.
					\item It is stable with respect to stopping at deterministic
					time points, i.e. for a $\Lambda$-measurable process $Z$,
					$s\in [0,\infty)$, also the stopped process 
					$(\omega,t)\mapsto Z_{t\wedge s}(\omega)$ is 
					$\Lambda$-measurable.
				\end{enumerate}
	\end{Def}	

	\begin{Ex}[\citing{EK81}{Remark}{p.118}]\label{meyer_ex_1}
			Assume we are given a filtered  probability space  	
	$(\Omega,\mathbb{F}, (\mathcal{F}_t)_{t\geq 0},
	\mathbb{P})$. Then
			 the optional $\sigma$-field with respect to the filtration
			$(\mathcal{F}_t)_{t\geq 0}$, 
			 i.e. the $\sigma$-field generated by all c\`adl\`ag, 
			$(\mathcal{F}_t)_{t\geq 0}$-adapted processes and the
			predictable $\sigma$-field with respect to the filtration
			$(\mathcal{F}_t)_{t\geq 0}$, 
			i.e. the $\sigma$-field generated by all continuous, 
			$(\mathcal{F}_t)_{t\geq 0}$-adapted processes are both
			Meyer-$\sigma$-fields.
	\end{Ex}
	
	So Meyer-$\sigma$-fields can emerge from suitable processes adapted to a given filtration. Conversely, Meyer-$\sigma$-fields also induce a filtration:
	
	\begin{Def}[Compare \cite{EK81}, p.119]\label{meyer_def_2}
			For a Meyer-$\sigma$-field $\Lambda$ we define
			its \emph{associated
			filtration} $\mathcal{F}^\Lambda:=
			(\mathcal{F}^\Lambda_t)_{t\geq 0}$ by setting 
			\[
			    \mathcal{F}^\Lambda_t:=\sigma\left(Z_t\,\middle|\, Z \text{ $\Lambda$-measurable}\right),\quad t\in [0,\infty).
			\]
			In addition, one can choose a $\sigma$-field $\aFA_{0-}\subset \aFA_0$ and we put
			$\mathcal{F}^\Lambda_\infty
			:=\bigvee_{t=1}^\infty \mathcal{F}^\Lambda_t$ as well as
			\[
				\mathcal{F}^\Lambda_{t+}:=\bigcap_{s>t}
				\mathcal{F}^\Lambda_s,\ t\geq 0,\quad (\mathcal{F}^\Lambda_+)_{0-}
				:=\mathcal{F}^\Lambda_{0-}.
			\]
			A process $Z:\Omega\times [0,\infty]\rightarrow 
				\mathbb{R}$ is called $\Lambda$-measurable, 
				if $Z_\infty$ is $\mathcal{F}^\Lambda_\infty$-measurable 
				and the restriction $Z|_{\Omega\times [0,\infty)}$
				 is $\Lambda$-measurable. 
	\end{Def}
	
	The next theorem gives us some idea what Meyer-$\sigma$-fields
	look like. 
	
	\begin{Thm}[Compare \citing{EK81}{Definition 2.22.2}{119}]
			\label{meyer_thm_1}
			A Meyer-$\sigma$-field contains the predictable
			$\sigma$-field $\mathcal{P}(\mathcal{F}^\Lambda)$
			relative to the filtration $(\mathcal{F}^\Lambda_t)_{t\geq 0}$ 
			and it is contained in the optional $\sigma$-field
			$\mathcal{O}(\mathcal{F}^\Lambda)$ relative to
			$(\mathcal{F}^\Lambda_t)_{t\geq 0}$.
			
			Conversely,  a $\sigma$-field on  
			$\Omega\times [0,\infty)$ generated by
			c\`adl\`ag processes is a Meyer-$\sigma$-field, if
			 it lies between the predictable
			and the optional $\sigma$-field of some filtration.
	\end{Thm}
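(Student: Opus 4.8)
The plan is to treat the two assertions separately, and within the first to establish the two inclusions by different means. Write $\mathcal{F}^\Lambda=(\mathcal{F}^\Lambda_t)_{t\geq 0}$ for the associated filtration. The inclusion $\Lambda\subseteq\mathcal{O}(\mathcal{F}^\Lambda)$ is essentially immediate from the generators: by (i) of Definition~\ref{meyer_def_1} the field $\Lambda$ is generated by c\`adl\`ag processes, and any such generator $Z$ is in particular $\Lambda$-measurable, so by the very definition of $\mathcal{F}^\Lambda$ each $Z_t$ is $\mathcal{F}^\Lambda_t$-measurable, i.e.\ $Z$ is $\mathcal{F}^\Lambda$-adapted. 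A c\`adl\`ag $\mathcal{F}^\Lambda$-adapted process is $\mathcal{O}(\mathcal{F}^\Lambda)$-measurable, and since such processes generate $\Lambda$, the inclusion follows.

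The reverse inclusion $\mathcal{P}(\mathcal{F}^\Lambda)\subseteq\Lambda$ is the first point requiring work. I would show that the predictable rectangles $A\times(s,\infty)$ with $A\in\mathcal{F}^\Lambda_s$ lie in $\Lambda$, since these together with the analogous sets at $t=0$ generate $\mathcal{P}(\mathcal{F}^\Lambda)$. The mechanism is the following: for a $\Lambda$-measurable $Z$, property (iii) of Definition~\ref{meyer_def_1} makes the stopped process $Z^s=(Z_{t\wedge s})_t$ $\Lambda$-measurable, while (ii) makes the deterministic indicator $\mathbf{1}_{(s,\infty)}(t)$ $\Lambda$-measurable; since $\Lambda$-measurable functions form an algebra, the product $W:=Z^s\,\mathbf{1}_{(s,\infty)}=Z_s\,\mathbf{1}_{(s,\infty)}$ is $\Lambda$-measurable. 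Intersecting $\{W\in B\}$ with $\Omega\times(s,\infty)\in\Lambda$ yields $\{Z_s\in B\}\times(s,\infty)\in\Lambda$. As $Z$ ranges over the $\Lambda$-measurable processes the events $\{Z_s\in B\}$ generate $\mathcal{F}^\Lambda_s$, and since $\{A\subseteq\Omega:A\times(s,\infty)\in\Lambda\}$ is readily checked to be a $\sigma$-field, it must contain all of $\mathcal{F}^\Lambda_s$; the generators at $t=0$ are handled the same way using $\Omega\times\{0\}\in\Lambda$. Taking differences $A\times(s,t]=(A\times(s,\infty))\setminus(A\times(t,\infty))$ then produces all predictable generators.

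For the converse I would verify the three defining conditions for a $\sigma$-field $\Lambda$ generated by c\`adl\`ag processes with $\mathcal{P}(\mathbb{F})\subseteq\Lambda\subseteq\mathcal{O}(\mathbb{F})$ for some filtration $\mathbb{F}=(\mathcal{F}_t)$. Condition (i) is the hypothesis, and (ii) holds because every deterministic Borel set $\Omega\times B$ is predictable, hence in $\mathcal{P}(\mathbb{F})\subseteq\Lambda$. The crux is the stopping stability (iii). Here the key observation is the decomposition, valid for a c\`adl\`ag adapted $X$ and $s\geq 0$,
\[
X^s=X\,\mathbf{1}_{[0,s]}+X_s\,\mathbf{1}_{(s,\infty)},
\]
in which the singleton $\{s\}$ is deliberately absorbed into the first term. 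The first summand is the product of the $\Lambda$-measurable $X$ with a deterministic indicator, hence $\Lambda$-measurable; the second is a predictable rectangle process, $X_s$ being $\mathcal{F}_s$-measurable since $\Lambda\subseteq\mathcal{O}(\mathbb{F})$, hence it lies in $\mathcal{P}(\mathbb{F})\subseteq\Lambda$. Thus $X^s$ is $\Lambda$-measurable for every generator. To pass from generators to an arbitrary $\Lambda$-measurable $Z$ I would invoke the functional monotone class theorem: applying bounded continuous functions to the generators and forming finite products gives a bounded multiplicative system generating $\Lambda$, on which $Z\mapsto Z^s$ is well behaved because stopping commutes with composition and products, $(\phi\circ X)^s=\phi\circ X^s$ and $(XY)^s=X^sY^s$, and with pointwise limits; the class $\{Z\text{ bounded}:Z\text{ and }Z^s\text{ both }\Lambda\text{-measurable}\}$ is then a monotone vector space containing this system, so it contains all bounded $\Lambda$-measurable processes, and truncation removes the boundedness restriction.

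The main obstacle I anticipate is precisely condition (iii) in the converse: the naive split of $X^s$ at the half-open interval $[0,s)$ leaves a residual term $X_s\,\mathbf{1}_{\{s\}}$ that is only optional and not in general predictable, hence not obviously in $\Lambda$. Arranging the bookkeeping so that the time point $s$ is carried by the deterministic factor, where optionality is irrelevant, rather than by the genuinely stochastic factor, is the one delicate point; everything else reduces to the monotone class routine sketched above.
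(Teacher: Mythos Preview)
The paper does not actually prove this theorem; it is stated as part of the survey of Lenglart's theory, with a citation to \cite{EK81} (and implicitly to \cite{EL80}), and no argument is given in the text. So there is no ``paper's own proof'' to compare against.

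Your argument is correct and self-contained. The inclusion $\Lambda\subseteq\mathcal{O}(\mathcal{F}^\Lambda)$ via generators is immediate, and your route to $\mathcal{P}(\mathcal{F}^\Lambda)\subseteq\Lambda$ via the stopped-process trick $Z^s\mathbf{1}_{(s,\infty)}=Z_s\mathbf{1}_{(s,\infty)}$ is clean; the passage from generators $\{Z_s\in B\}$ to all of $\mathcal{F}^\Lambda_s$ by the $\sigma$-field observation is the right move. In the converse, the decomposition $X^s=X\mathbf{1}_{[0,s]}+X_s\mathbf{1}_{(s,\infty)}$ and the observation that the second summand is predictable (using $\Lambda\subseteq\mathcal{O}(\mathbb{F})$ to get $X_s\in\mathcal{F}_s$) are exactly what is needed; your anticipated difficulty with the split at $\{s\}$ is correctly resolved. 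The monotone class extension is routine, and your verification that $(\phi\circ X)^s=\phi\circ X^s$ and $(XY)^s=X^sY^s$ ensures the multiplicative system is carried into the target class. One minor remark: you implicitly use that every $\mathcal{O}(\mathbb{F})$-measurable process is $\mathbb{F}$-adapted (applied to the generators of $\Lambda$), which is standard but worth stating explicitly since the generators are only assumed c\`adl\`ag, not a priori adapted.
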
	
	
 \subsubsection{Stopping times corresponding to a Meyer-$\sigma$-field
 $\Lambda$} Next we will give a definition for the concept of
stopping times when using general Meyer-$\sigma$-fields.
	
	\begin{Def}[\citing{EK81}{Definition 2.22.2}{119}]
			\label{meyer_def_3}	
			A random variable $S$ with values in $[0,\infty]$
			is a \emph{$\Lambda$-stopping time},
			if 
			\[
				[[S,\infty[[:=\left\{(\omega,t)\in \Omega\times 
				[0,\infty)\, \middle | \, S(\omega)\leq t\right\} \in \Lambda.
			\]			
			The set of all $\Lambda$-stopping times is denoted
			by $\stm$.
			Additionally we define for each mapping 
			$S:\Omega\rightarrow [0,\infty]$ a
			$\sigma$-field 
			\[
			\mathcal{F}^\Lambda_S := \sigma(Z_S \,|\, Z \text{ $\Lambda$-measurable process})
			\]				
	\end{Def}
	
	This concept of $\Lambda$-stopping times extends classical notions of stopping times in a natural way:
	
	\begin{Ex}[Compare \cite{EK81}, p.119]\label{meyer_ex_2}
			For a filtration $(\mathcal{F}_t)_{t\geq 0}$ and 
			$\Lambda=\mathcal{O}(\mathcal{F})$, a $\Lambda$-stopping
			time $S$ is a
			 classical stopping time associated to the filtration
			$(\mathcal{F}_t)_{t\geq 0}$
			 and $\mathcal{F}^\Lambda_S=\mathcal{F}_S$.
			 For $\Lambda=\mathcal{P}(\mathcal{F})$ by contrast a
			$\Lambda$-stopping time $S$ is a
				predictable stopping time associated to the filtration
				$(\mathcal{F}_t)_{t\geq 0}$. 
	\end{Ex}
	
	\begin{Rem}[Compare \cite{EK81}, p.119]\label{meyer_rem_1}
			The $\sigma$-fields from Definition \ref{meyer_def_3}
			satisfy 
			\[
				\mathcal{F}^\Lambda_{S-}\subset \mathcal{F}^\Lambda_S
				\subset \mathcal{F}^\Lambda_{S+},
			\]
			where $\mathcal{F}^\Lambda_{S+}:=(\mathcal{F}^\Lambda_+)_S$
			with $\mathcal{F}^\Lambda_+$ the right-continuous
			filtration defined in Definition \ref{meyer_def_2} and $\mathcal{F}^\Lambda_{S-}:=
			({ \mathcal{F}^\Lambda})_S^{\mathcal{P}({ \mathcal{F}^\Lambda})}$,
			which is in the case of an $\mathcal{F}^\Lambda_+$-stopping time
			$S$ equal to
			the $\sigma$-field
			generated by $\mathcal{F}_{0-}^\Lambda$ and the sets $A\cap
			\{t<S\}$, $t\geq 0$, $A
			\in \mathcal{F}_t^\Lambda$ (see \citing{EL80}{Remark}{505}).
			By \citing{EL80}{Theorem 4.1}{505}, we have for a 
			$\Lambda$-stopping time $S$ 
			\[
				\mathcal{F}^\Lambda_S=\left\{H\in 
					\mathcal{F}^\Lambda_{\infty} \ \middle\vert\  
					S_H \in \stm \right\},
			\]
			where $S_H$ is defined to be
			\[
			    S_H:=\begin{cases}
			            S&\text{ on } H,\\
			            \infty &\text{ on } H^c.
			        \end{cases}
			\]
	\end{Rem}
		
	\subsubsection{Meyer's section and projection theorems}	
		
	Next we state the key section and projection theorems,
	which are well known
	for the optional and the predictable $\sigma$-field, but 
	actually still
	hold for Meyer-$\sigma$-fields. We will fix for the rest of this
	section a complete probability space
	$(\Omega,\mathbb{F},\mathbb{P})$ and a Meyer-$\sigma$-field 
	$\Lambda$ (not necessarily complete; see Definition \ref{meyer_def_4} below), which is contained
	in $\mathbb{F}\otimes \mathcal{B}([0,\infty))$.
	
	\begin{Thm}[Meyer Section Theorem, \citing{EK81}{Theorem 2.23.1}{120}]\label{meyer_thm_2}
			Let $B$ be an element of $\Lambda$. For every
				$\epsilon>0$, there exists $S\in \stm$
				such that $B$ contains the graph of $S$, i.e. 
				\[ 
					B\supset \mathrm{graph}(S):=\stsetG{S}:=\{(\omega,S(\omega))\in \Omega\times 
				[0,\infty)	\, | \, S(\omega)<\infty\}
				\]								
				and
				\[
					\mathbb{P}(S<\infty)>\mathbb{P}(\pi(B))-\epsilon,
				\]
				where $\pi(B):=\left\{\omega\in \Omega \,
				\middle |\, (\omega,t)\in B
					 \text{ for some $t\in [0,\infty)$}\right\}$
				denotes the projection of $B$ onto $\Omega$.
	\end{Thm}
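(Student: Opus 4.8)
The plan is to establish this by Choquet's capacitability method, in the spirit of the Dellacherie--Meyer proofs of the optional and predictable section theorems, rather than by reducing to a known case. A reduction to the optional section theorem is not available: although $\Lambda\subset\mathcal{O}(\mathcal{F}^\Lambda)$ by Theorem \ref{meyer_thm_1}, the optional time it would produce need not be a $\Lambda$-stopping time, and since $\Lambda$ may coincide with the predictable $\sigma$-field $\mathcal{P}(\mathcal{F}^\Lambda)$, the statement contains the (strictly harder) predictable section theorem as a special case. As a preliminary point I note that $\pi(B)$, being the projection onto $\Omega$ of a set in $\mathbb{F}\otimes\mathcal{B}([0,\infty))$, is analytic and hence universally measurable, so by completeness of $(\Omega,\mathbb{F},\mathbb{P})$ the quantity $\mathbb{P}(\pi(B))$ on the right-hand side is well defined.

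I would first reduce the theorem to an equality of set functions. Set $c(B):=\sup\{\mathbb{P}(S<\infty)\mid S\in\stm,\ \stsetG{S}\subset B\}$. For $S,T\in\stm$ one has $S\wedge T\in\stm$, because $\stsetRO{S\wedge T}{\infty}=\stsetRO{S}{\infty}\cup\stsetRO{T}{\infty}\in\Lambda$, together with $\stsetG{S\wedge T}\subset\stsetG{S}\cup\stsetG{T}$ and $\{S\wedge T<\infty\}=\{S<\infty\}\cup\{T<\infty\}$. Hence, given $\Lambda$-stopping times $S_1,S_2,\dots$ with graphs in $B$ whose probabilities approach $c(B)$, the minima $S_1\wedge\cdots\wedge S_n$ are again $\Lambda$-stopping times with graphs in $B$ and $\mathbb{P}(S_1\wedge\cdots\wedge S_n<\infty)\uparrow c(B)$. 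Thus it suffices to prove $c(B)=\mathbb{P}(\pi(B))$: any $\epsilon$-optimal single $\Lambda$-stopping time then satisfies both conclusions of the theorem.

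To prove $c(B)=\mathbb{P}(\pi(B))$ (the reverse inequality being trivial) I would introduce the Choquet capacity $I(A):=\mathbb{P}^{*}(\pi(A))$ on subsets $A\subset\Omega\times[0,\infty)$, together with the paving $\mathcal{K}$ of finite unions of graphs $\stsetG{S}$, $S\in\stm$. This paving is stable under finite unions by definition and under finite intersections, since $\stsetG{S}\cap\stsetG{T}=\stsetG{S_H}$ with $H=\{S=T\}\in\mathcal{F}^\Lambda_S$, the restriction $S_H$ of Remark \ref{meyer_rem_1} being again a $\Lambda$-stopping time. Using that $\Lambda$ is generated by c\`adl\`ag processes (Definition \ref{meyer_def_1}(i)) and stable under deterministic stopping (Definition \ref{meyer_def_1}(iii)), one verifies that every $B\in\Lambda$ is $\mathcal{K}$-analytic. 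Choquet's capacitability theorem, combined with compactness of the paving relative to $I$, then yields a finite union $\stsetG{S_1}\cup\cdots\cup\stsetG{S_n}\subset B$ with $I(\stsetG{S_1}\cup\cdots\cup\stsetG{S_n})>\mathbb{P}(\pi(B))-\epsilon$, which collapses through $S:=S_1\wedge\cdots\wedge S_n\in\stm$ as in the previous step. I expect the main obstacle to lie exactly in establishing this compactness, i.e.\ in verifying that $I$ is a genuine $\mathcal{K}$-capacity, and in particular that it is continuous along decreasing sequences of paving elements. This property fails for arbitrary sub-$\sigma$-fields and is precisely where the right-continuity of the c\`adl\`ag generators of $\Lambda$ must be exploited, so as to guarantee that a decreasing limit of stopping-time graphs remains controlled by a $\Lambda$-stopping time.
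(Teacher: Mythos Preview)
The paper does not prove this theorem; it is quoted from \cite{EK81}, Theorem~2.23.1, who in turn refers to \cite{EL80}. There is therefore no in-paper proof to compare against.

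Your overall strategy is the correct one and matches the original literature: a Choquet capacitability argument relative to a paving built from $\Lambda$-stopping times, followed by the observation that the resulting approximating set can be collapsed to a single $\Lambda$-stopping time via minima. Your reduction to the identity $c(B)=\mathbb{P}(\pi(B))$ is clean and correct, as is the stability of $\stm$ under finite minima.

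There is, however, a genuine gap in the choice of paving. Finite unions of graphs $\stsetG{S}$ do not form a paving for which the required ``continuity from above'' of $I$ is available, nor is it clear that every $B\in\Lambda$ is analytic over this paving; you yourself flag both points as open. In the classical arguments (Dellacherie, Lenglart) the paving consists instead of finite unions of closed stochastic intervals $\stsetC{S}{T}$ with $S\leq T$ in $\stm$. For such intervals a decreasing intersection is again of the same type (one takes the supremum of the left endpoints and the infimum of the right endpoints, and here one must check these are again $\Lambda$-stopping times), and the projection of the intersection equals the decreasing limit of projections by a compactness argument in the $t$-variable. The c\`adl\`ag generators of $\Lambda$ enter precisely in showing that $\Lambda$ is contained in the $\sigma$-field generated by this interval paving, so that every $B\in\Lambda$ is analytic over it. With graphs alone neither step goes through: a single graph carries no ``thickness'' in $t$, so the compactness mechanism that drives continuity from above is absent. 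Replacing your paving by closed stochastic intervals, and verifying that debuts of $\Lambda$-sets after $\Lambda$-stopping times are again $\Lambda$-stopping times, should close the argument.
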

	
	\begin{Rem}
		The projection $\pi(B)$ of a set $B\in \Lambda$ is an element of $\mathbb{F}$ 
		as the probability space is assumed to be complete. In general we would have to
		replace $\mathbb{P}(\pi(B))$ by $\mathbb{P}^\ast(\pi(B))$, where 
		$\mathbb{P}^\ast$ denotes the outer measure
		of $\mathbb{P}$ (see \citing{DM78}{Footnote (1)}{137}).
	\end{Rem}
	
	An important consequence of Theorem \ref{meyer_thm_2} is the following corollary:
	
	\begin{Cor}[\cite{EL80}, p. 507]	\label{meyer_cor_1}
		If $Z$ and $Z'$ are two $\Lambda$-measurable processes,
		such that for each bounded $T\in \stm$ we have
		$Z_T\leq Z_T'$ a.s. (resp. $Z_T=Z_T'$ a.s.), then the 
		set $\{Z>Z'\}$ is evanescent (resp. $Z$ and $Z'$ are
		indistinguishable).
	\end{Cor}
	
	Next it is also possible to project a suitable process 
	into the space of $\Lambda$-measurable processes:
	
	\begin{Thm}[Projection Theorem, compare \citing{EL80}{Theorem 8}{511}]\label{meyer_thm_3}
				For any bounded or positive $\mathbb{F}\otimes
				\mathcal{B}([0,\infty))$-measurable process $Z$,  
				there exists a 
				$\Lambda$-measurable process $^\Lambda Z$, unique up 
				to indistinguishability, such that 
				\[
					{^\Lambda Z}_S=\mathbb{E}\left[Z_S|
					\mathcal{F}^\Lambda_S\right]\quad \mathbb{P}
					\text{-a.s. for any finite $S\in \stm$}.			
				\]
				This process is called \emph{$\Lambda$-projection of $Z$}.
	\end{Thm}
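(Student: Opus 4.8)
The plan is to establish uniqueness first, then to obtain existence by a functional monotone–class reduction, with the construction of the projection of a time–constant process as the analytic core.

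\emph{Uniqueness and reduction.} If $Y,Y'$ are $\Lambda$-measurable with $Y_S=\mathbb E[Z_S\mid\mathcal F^\Lambda_S]=Y'_S$ a.s. for every finite $S\in\stm$, then in particular $Y_T=Y'_T$ a.s. for every bounded $T\in\stm$, so $Y,Y'$ are indistinguishable by Corollary \ref{meyer_cor_1}; its inequality form will also be used below to compare projections. For existence I would first reduce to bounded $Z$: for positive $Z$ one applies the bounded case to $Z\wedge n$, takes the projections $Y^n$ increasing (legitimate by the inequality form of Corollary \ref{meyer_cor_1}), and sets $Y:=\lim_n Y^n$, whose defining identity follows from conditional monotone convergence. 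For bounded $Z$, let $\mathcal H$ be the set of bounded $\mathbb F\otimes\mathcal B([0,\infty))$-measurable processes admitting a $\Lambda$-projection. By uniqueness $\mathcal H$ is a vector space, it contains the constants, and the same monotone–limit argument shows it is closed under bounded increasing limits. Thus by the functional monotone class theorem it suffices to exhibit the projection on a multiplicatively stable family generating $\mathbb F\otimes\mathcal B([0,\infty))$, and I take the products $Z_t(\omega)=H(\omega)k(t)$ with $H$ bounded $\mathbb F$-measurable and $k$ bounded Borel.

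\emph{Separating the time variable.} Since the identity process $I_t:=t$ is continuous and adapted, hence $\Lambda$-measurable, every finite $S\in\stm$ satisfies $S=I_S$, so $S$, and therefore $k(S)$, is $\mathcal F^\Lambda_S$-measurable. Consequently, writing $^\Lambda H$ for the projection of the time–constant process $H$, the process $(\omega,t)\mapsto k(t)\,{}^\Lambda H_t$ is $\Lambda$-measurable (the deterministic factor $k$ is $\Lambda$-measurable by Definition \ref{meyer_def_1}(ii)), and at every finite $S$ it satisfies $k(S)\,{}^\Lambda H_S=k(S)\,\mathbb E[H\mid\mathcal F^\Lambda_S]=\mathbb E[Hk(S)\mid\mathcal F^\Lambda_S]$, i.e. it is the projection of $Hk$. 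Everything therefore reduces to constructing $^\Lambda H$ for bounded $\mathbb F$-measurable $H$.

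\emph{The core construction.} Let $M$ be a càdlàg modification of the uniformly integrable martingale $t\mapsto\mathbb E[H\mid\mathcal F^\Lambda_{t+}]$, which exists under the assumed completeness of $(\Omega,\mathbb F,\mathbb P)$ together with the right–continuity of $\mathcal F^\Lambda_+$ from Definition \ref{meyer_def_2}. In the two extreme cases the answer is classical: $^\Lambda H=M$ when $\Lambda=\mathcal O(\mathcal F^\Lambda)$ and $^\Lambda H=M_-$ (the predictable projection) when $\Lambda=\mathcal P(\mathcal F^\Lambda)$. For a general Meyer-$\sigma$-field, Theorem \ref{meyer_thm_1} sandwiches $\Lambda$ between these two, and the idea is to build $^\Lambda H$ from $M$ by keeping its right–continuous values on the totally inaccessible part while replacing $M$ by its left limit $M_-$ along exactly those predictable (accessible) times that $\Lambda$ does not see from the right; the resulting path is $\Lambda$-measurable precisely because $\mathcal P(\mathcal F^\Lambda)\subset\Lambda\subset\mathcal O(\mathcal F^\Lambda)$.

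\emph{Verification and the main obstacle.} It remains to check $^\Lambda H_S=\mathbb E[H\mid\mathcal F^\Lambda_S]$ for every finite $S\in\stm$; the $\mathcal F^\Lambda_S$-measurability of the left–hand side is automatic from $\Lambda$-measurability, so only the conditional expectation identity is at stake. I would test this graph by graph using the Meyer section theorem (Theorem \ref{meyer_thm_2}), exploiting that $A\in\mathcal F^\Lambda_S$ is equivalent to $S_A\in\stm$ (Remark \ref{meyer_rem_1}) to encode the $\mathbf 1_A$-weighting through evaluation at the stopping time $S_A$, and splitting $S$ into its totally inaccessible and accessible parts. On the totally inaccessible part this is the optional projection identity for $M$. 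The delicate point — and the step I expect to be the main obstacle — is the accessible part, where the constructed value ($M$ or $M_-$) must be matched against $\mathcal F^\Lambda_S$, whose position between $\mathcal F^\Lambda_{S-}$ and $\mathcal F^\Lambda_{S+}$ (Remark \ref{meyer_rem_1}) is dictated by how $\Lambda$ treats that particular predictable time. It is exactly here that the finer structure of Meyer-$\sigma$-fields from Lenglart's theory, rather than merely the sandwiching of Theorem \ref{meyer_thm_1}, has to be invoked.
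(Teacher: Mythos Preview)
The paper does not prove this theorem at all: it is stated in the survey part (Section~2.1.3) as a result taken directly from \cite{EL80}, with no argument beyond the citation. So there is no ``paper's own proof'' to compare against; I can only comment on whether your sketch stands on its own.

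Your reductions are fine. Uniqueness via Corollary~\ref{meyer_cor_1}, the passage from positive to bounded by truncation, the functional monotone class argument, and the factorisation $Z_t=H(\omega)k(t)$ with $k(S)\in\mathcal F^\Lambda_S$ are all standard and correct. The problem is the part you yourself flag as ``the core construction'' and ``the main obstacle'': you never actually build ${}^\Lambda H$. Saying one should ``replace $M$ by $M_-$ along exactly those predictable times that $\Lambda$ does not see from the right'' is a description of what the answer must look like, not a construction. To make this precise one needs a concrete $\Lambda$-measurable object (in Lenglart's treatment this is done via the decomposition of $\Lambda$ in terms of the predictable field and a distinguished thin $\Lambda$-measurable set carrying the ``visible'' jumps, cf.\ his Theorems~7 and~8), and then a genuine verification that $\mathbb E[{}^\Lambda H_S\mathbf 1_A]=\mathbb E[H\mathbf 1_A]$ for $A\in\mathcal F^\Lambda_S$ on the accessible part. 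As written, your proposal stops at the point where the real work begins: you correctly locate the difficulty but do not resolve it, so the argument is a plausible outline rather than a proof.
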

	
	\begin{Ex}\label{meyer_ex_3}
		If $\Lambda$ is the optional or predictable $\sigma$-field
		with respect to a filtration $(\mathcal{F}_t)_{t\geq 0}$
		then $^\Lambda Z$ coincides with the well known optional and 
		predictable projection, respectively.
	\end{Ex}
	
	\begin{Rem}\label{meyer_rem_2}
			By \citing{EL80}{Theorem 11}{513}, we can use Theorem
			\ref{meyer_thm_3} also for processes $Z$
			of class($D^\Lambda$), i.e. when $\{Z_T\ | \ T\in
			\stm\}$
			is uniformly integrable.
	\end{Rem}
	
	The Meyer Section Theorem and the definition of $\Lambda$-stopping times yield the following equivalent characterization:
	
	\begin{Thm}
			For any $\mathbb{F}\otimes
				\mathcal{B}([0,\infty))$-measurable process $Z\geq 0$,  
			the $\Lambda$-projection $^\Lambda Z$ is the, unique up 
			to indistinguishability, process satisfying 
			\[
				\mathbb{E}\left[\int_{[0,\infty)} Z_s\mathrm{d} A_s\right]
				=\mathbb{E}\left[\int_{[0,\infty)} {^\Lambda Z}_s\mathrm{d} A_s\right]
			\]
			for any c\`adl\`ag, $\Lambda$-measurable, increasing process
			$A$.
	\end{Thm}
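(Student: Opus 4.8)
The plan is to establish the two halves of the claim separately: that $^\Lambda Z$ satisfies the stated identity (existence) and that it is the only $\Lambda$-measurable process doing so (uniqueness). Both rest on the defining property of the $\Lambda$-projection from Theorem~\ref{meyer_thm_3}, combined with a reduction of the general integrator $A$ to single-jump integrators carried by $\Lambda$-stopping times.

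For existence, the idea is to decompose a general increasing integrator through its right-continuous inverse. Given a càdlàg, $\Lambda$-measurable, increasing $A$, I would set $T_a:=\inf\{t\geq 0: A_t\geq a\}$ for $a>0$. Right-continuity and monotonicity of $A$ yield the set identity $\stsetRO{T_a}{\infty}=\{A\geq a\}$, which lies in $\Lambda$ since $A$ is $\Lambda$-measurable, so each $T_a$ is a $\Lambda$-stopping time in the sense of Definition~\ref{meyer_def_3}. The classical change-of-variables formula for Stieltjes integrals then gives, for any nonnegative $\mathbb{F}\otimes\mathcal{B}([0,\infty))$-measurable $W$,
\[
\int_{[0,\infty)} W_s\,\mathrm{d}A_s=\int_{(0,\infty)} W_{T_a}\,\mathbb{1}_{\{T_a<\infty\}}\,\mathrm{d}a .
\]
Applying this to $W=Z$ and to $W={^\Lambda Z}$, taking expectations, and interchanging $\mathbb{E}$ with the $a$-integral by Tonelli (all integrands nonnegative), the matter reduces to showing $\mathbb{E}[Z_{T_a}\mathbb{1}_{\{T_a<\infty\}}]=\mathbb{E}[{^\Lambda Z}_{T_a}\mathbb{1}_{\{T_a<\infty\}}]$ for each fixed $a$. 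Since $T_a$ may be infinite while Theorem~\ref{meyer_thm_3} is stated for finite times, I would apply the projection identity to the bounded time $T_a\wedge n$, integrate over $\{T_a\leq n\}\in\mathcal{F}^\Lambda_{T_a\wedge n}$ using the tower property, and let $n\to\infty$ by monotone convergence; integrating back in $a$ then gives the asserted equality.

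For uniqueness I would test against single-jump integrators and invoke Corollary~\ref{meyer_cor_1}. Let $Y$ be any $\Lambda$-measurable process satisfying the identity. For a bounded $T\in\stm$ and $H\in\mathcal{F}^\Lambda_T$, Remark~\ref{meyer_rem_1} shows $T_H\in\stm$, so $A^{T,H}_s:=\mathbb{1}_{\stsetRO{T_H}{\infty}}(\cdot,s)=\mathbb{1}_H\,\mathbb{1}_{\{T\leq s\}}$ is a càdlàg, increasing, $\Lambda$-measurable process. As $T$ is bounded, $\int Z_s\,\mathrm{d}A^{T,H}_s=Z_T\mathbb{1}_H$, whence the identity applied to $A^{T,H}$ for $Y$ and for $^\Lambda Z$ gives $\mathbb{E}[Y_T\mathbb{1}_H]=\mathbb{E}[{^\Lambda Z}_T\mathbb{1}_H]$ for every $H\in\mathcal{F}^\Lambda_T$. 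Since $Y_T$ and $^\Lambda Z_T$ are $\mathcal{F}^\Lambda_T$-measurable by Definition~\ref{meyer_def_3}, it follows that $Y_T={^\Lambda Z}_T$ a.s.\ (first on the $\mathcal{F}^\Lambda_T$-sets where both are bounded, by truncating $H$ with $\{Y_T\leq n\}\cap\{{^\Lambda Z}_T\leq n\}$, then everywhere by taking unions). As this holds for every bounded $T\in\stm$, Corollary~\ref{meyer_cor_1} forces $Y$ and $^\Lambda Z$ to be indistinguishable.

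The step I expect to be the main obstacle is the rigorous treatment of the time-change in the existence part: verifying the set identity $\stsetRO{T_a}{\infty}=\{A\geq a\}$ at the jump times of $A$, establishing joint measurability of $(\omega,a)\mapsto T_a(\omega)$ (which follows from its monotonicity in $a$ and measurability in $\omega$) so that Tonelli applies, and correctly passing from the finite-time projection identity to the possibly infinite $T_a$ via the truncation $T_a\wedge n$. The uniqueness direction, by contrast, is a fairly direct consequence of the section theorem packaged in Corollary~\ref{meyer_cor_1}.
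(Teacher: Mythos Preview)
Your proposal is correct and follows precisely the route the paper signals. The paper does not give a written-out proof here; it only states that ``the Meyer Section Theorem and the definition of $\Lambda$-stopping times yield the following equivalent characterization,'' and your argument unpacks exactly these two ingredients: the definition of $\Lambda$-stopping times is used to recognize each $T_a=\inf\{t:A_t\geq a\}$ as a $\Lambda$-stopping time via $\stsetRO{T_a}{\infty}=\{A\geq a\}\in\Lambda$, reducing the integral identity to the defining property of the $\Lambda$-projection (Theorem~\ref{meyer_thm_3}); and the Section Theorem, through Corollary~\ref{meyer_cor_1}, gives uniqueness after testing against the single-jump integrators $\mathbb{1}_{\stsetRO{T_H}{\infty}}$.
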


	\subsubsection{Completion of Meyer-$\sigma$-fields} 
	
		Let us next analyze  
		the influence of $\mathbb{P}$-nullsets on $\Lambda$-stopping times and
		$\Lambda$-measurable process. Recall that we
		have fixed a probability space
		$(\Omega,\mathbb{F},\mathbb{P})$ and a Meyer-$\sigma$-field 
		$\Lambda$, which is contained
		in $\mathbb{F}\otimes \mathcal{B}([0,\infty))$.
	
		\begin{Def}[Compare \citing{EL80}{Definition and Theorem 2}{507}]
		\label{meyer_def_4}
			A Meyer-$\sigma$-field $\Lambda$ is called $\mathbb{P}$-complete if
			and only if one of the following equivalent conditions is fulfilled:
			\begin{enumerate}[(i)]
				\item Every mapping $T:\Omega\rightarrow 
				[0,\infty]$, which is almost surely
				equal to a $\Lambda$-stopping time is a $\Lambda$-stopping time.
				\item Every $\mathbb{F}\otimes
				\mathcal{B}([0,\infty))$-measurable process, which is 
				indistinguishable from a $\Lambda$-measurable
				process is itself $\Lambda$-measurable.
			\end{enumerate}
		\end{Def}
		
		The next statement shows how to obtain a completion of a Meyer-$\sigma$-field:
		Meyer-$\sigma$-field $\Lambda$:
		
		\begin{DefThm}[Compare \cite{EL80}, p.507-509]
				Define $\Lambda^{\mathbb{P}}$ as the $\sigma$-field generated by
				the stochastic intervals $\stsetRO{T}{\infty}$ for 
				random variables $T:\Omega\rightarrow [0,\infty]$
				which almost surely coincide with $\Lambda$-stopping time. Then
				the following results hold true:
				\begin{enumerate}[(i)]
					\item $\Lambda^{\mathbb{P}}$ is the smallest 
								$\mathbb{P}$-complete Meyer-$\sigma$-field containing $\Lambda$.	
					\item 	 
					The mapping $T:\Omega\rightarrow
			[0,\infty]$
			is a $\Lambda^{\mathbb{P}}$-stopping time if and
			only if the graph of $T$ is
			contained in $\Lambda^{\mathbb{P}}$.
					\item A random variable $T:\Omega\rightarrow [0,\infty]$
							is a $\Lambda^{\mathbb{P}}$-stopping time if and only if
							it is a.s. equal to a $\Lambda$-stopping time. 
					\item Fix a $\Lambda^{\mathbb{P}}$-stopping time $T$
					and take any corresponding $\Lambda$-stopping time 
					$\tilde{T}$ with $T=\tilde{T}$
					almost surely. Then
					$\mathcal{F}^{\Lambda^{\mathbb{P}}}_T=
						\bar{\mathcal{F}}^{\Lambda}_{\tilde{T}}$, where
						$\bar{\mathcal{F}}^{\Lambda}_{\tilde{T}}$ denotes
						the $\sigma$-field generated by 
						$\mathcal{F}^{\Lambda}_{\tilde{T}}$ and all 
						$\mathbb{P}$-nulsets.
				\end{enumerate}
				We call $\Lambda^{\mathbb{P}}$ the $\mathbb{P}$-completion of
				$\Lambda$.
		\end{DefThm}
			
		Analogously to Theorem~\ref{meyer_thm_1}, the following 
		theorem characterizes the $\mathbb{P}$-complete Meyer-$\sigma$-fields:
		
		\begin{Thm}[\citing{EL80}{Theorem 5}{509}]\label{meyer_thm_4}
			A $\sigma$-field $\Lambda$ generated by c\`adl\`ag processes is a 
			$\mathbb{P}$-complete Meyer-$\sigma$-field if and 
			only if $\Lambda$ is in between the 
			predictable and optional $\sigma$-field of a filtration,
			which is right-continuous and
			$\mathbb{P}$-complete, i.e. which fulfills the usual conditions.
		\end{Thm}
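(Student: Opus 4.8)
The strategy is to bootstrap from the uncompleted characterization in Theorem~\ref{meyer_thm_1} and to control how the predictable and optional $\sigma$-fields behave once the associated filtration is augmented to satisfy the usual conditions. The organizing observation, which I would record first, is that for a Meyer-$\sigma$-field the two conditions of Definition~\ref{meyer_def_4} are equivalent to the single requirement that $\Lambda$ contain every $\mathbb{F}\otimes\mathcal{B}([0,\infty))$-measurable evanescent set. Indeed, if $A$ is such a set then $\mathbf{1}_A$ is indistinguishable from the $\Lambda$-measurable process $0$, so condition (ii) forces $A\in\Lambda$; conversely, if every measurable evanescent set lies in $\Lambda$ and $Z$ is indistinguishable from a $\Lambda$-measurable $Z'$, then $A:=\{Z\neq Z'\}$ is a measurable evanescent set and hence in $\Lambda$, and since $Z$ agrees with $Z'$ off $A$ while the level sets $\{Z\mathbf{1}_A\in B\}$ for Borel $B\not\ni 0$ are measurable evanescent subsets of $A$, the process $Z$ is $\Lambda$-measurable. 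In short, $\mathbb{P}$-completeness of $\Lambda$ is the statement that $\Lambda$ absorbs measurable evanescent sets.

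For necessity, let $\Lambda$ be a $\mathbb{P}$-complete Meyer-$\sigma$-field and let $\mathcal{G}$ be the usual augmentation of its associated filtration, so that $\mathcal{G}_s$ is obtained from $\mathcal{F}^\Lambda_{s+}$ by adjoining the $\mathbb{P}$-null sets and $\mathcal{G}_{0-}$ from the chosen initial $\mathcal{F}^\Lambda_{0-}$ of Definition~\ref{meyer_def_2}; by construction $\mathcal{G}$ is right-continuous and $\mathbb{P}$-complete. Since $\mathcal{F}^\Lambda_t\subseteq\mathcal{G}_t$, Theorem~\ref{meyer_thm_1} gives at once $\Lambda\subseteq\mathcal{O}(\mathcal{F}^\Lambda)\subseteq\mathcal{O}(\mathcal{G})$. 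For the reverse inclusion $\mathcal{P}(\mathcal{G})\subseteq\Lambda$ I would check it on generators: a typical generator $(s,t]\times A$ with $A\in\mathcal{G}_s$ splits, up to an evanescent set, as $(s,t]\times A'$ with $A'\in\mathcal{F}^\Lambda_{s+}$; then $(s,t]\times A'=\bigcup_n\,(s+\tfrac1n,t]\times A'$ lies in $\mathcal{P}(\mathcal{F}^\Lambda)\subseteq\Lambda$ because $A'\in\mathcal{F}^\Lambda_{s+}\subseteq\mathcal{F}^\Lambda_{s+1/n}$, while the evanescent remainder lies in $\Lambda$ by the first paragraph. The time-$0$ generators $\{0\}\times A$, $A\in\mathcal{G}_{0-}$, are treated identically through $\mathcal{F}^\Lambda_{0-}$, which is exactly why the choice of $\mathcal{F}^\Lambda_{0-}$ recalled in Remark~\ref{meyer_rem_1} must be carried along. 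Hence $\mathcal{P}(\mathcal{G})\subseteq\Lambda\subseteq\mathcal{O}(\mathcal{G})$ with $\mathcal{G}$ satisfying the usual conditions.

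For sufficiency, assume $\Lambda$ is generated by c\`adl\`ag processes and $\mathcal{P}(\mathcal{G})\subseteq\Lambda\subseteq\mathcal{O}(\mathcal{G})$ for some filtration $\mathcal{G}$ obeying the usual conditions. The converse part of Theorem~\ref{meyer_thm_1} immediately yields that $\Lambda$ is a Meyer-$\sigma$-field, so only $\mathbb{P}$-completeness remains, and by the first paragraph this reduces to showing that every $\mathbb{F}\otimes\mathcal{B}([0,\infty))$-measurable evanescent set $A$ belongs to $\Lambda$. As $\mathcal{P}(\mathcal{G})\subseteq\Lambda$, it suffices to place $A$ in $\mathcal{P}(\mathcal{G})$. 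Writing $A\subseteq N\times[0,\infty)$ with $N:=\pi(A)$ a $\mathbb{P}$-null set—here completeness of $\mathbb{F}$ secures $N\in\mathbb{F}$ and the usual conditions give $N\in\mathcal{G}_0$—I would note that the trace $\sigma$-field $\{B\in\mathbb{F}\otimes\mathcal{B}([0,\infty)):B\subseteq N\times[0,\infty)\}$ is generated by the rectangles $(F\cap N)\times C$ with $F\in\mathbb{F}$, $C\in\mathcal{B}([0,\infty))$, each of which is predictable because $F\cap N\subseteq N$ is $\mathbb{P}$-null and thus in $\mathcal{G}_0$. Since the predictable sets contained in $N\times[0,\infty)$ form a $\sigma$-field on $N\times[0,\infty)$ containing these generators, every such $B$—in particular $A$—is predictable, completing the proof.

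The routine ingredients are the monotone-class and generator computations together with the elementary inclusion $\mathcal{O}(\mathcal{F}^\Lambda)\subseteq\mathcal{O}(\mathcal{G})$ under filtration enlargement. The step I expect to require the most care is the necessity inclusion $\mathcal{P}(\mathcal{G})\subseteq\Lambda$: it is precisely here that right-continuification, $\mathbb{P}$-completion, and the treatment of time $0$ all interact, and one must verify that neither passing to $\mathcal{F}^\Lambda_{+}$ nor adjoining null sets enlarges the predictable $\sigma$-field beyond $\Lambda$. The right-continuity part rests on the identity $\mathcal{P}(\mathcal{F}^\Lambda_{+})=\mathcal{P}(\mathcal{F}^\Lambda)$, with the initial contribution isolated through $\mathcal{F}^\Lambda_{0-}$, and the completion part rests on the $\mathbb{P}$-completeness of $\Lambda$ absorbing the evanescent discrepancies; keeping these two mechanisms cleanly separated is the crux of the argument.
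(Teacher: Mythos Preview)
The paper does not actually prove this theorem: it is stated as a survey result cited verbatim from \cite{EL80}, Theorem~5, p.~509, without any argument given here. There is therefore no ``paper's own proof'' to compare your attempt against.

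That said, your argument is essentially correct and follows the natural route. The reformulation of $\mathbb{P}$-completeness as ``$\Lambda$ absorbs measurable evanescent sets'' is the right organizing principle, and both directions then reduce to generator computations together with Theorem~\ref{meyer_thm_1}. One small point to watch in the sufficiency direction: you place the null set $N$ in $\mathcal{G}_0$ and then argue that $(F\cap N)\times C$ is predictable. For the part of $C$ meeting $\{0\}$ this requires $F\cap N\in\mathcal{G}_{0-}$, not just $\mathcal{G}_0$; in the paper's conventions (see the standing assumptions at the start of Section~\ref{sec:extension}) the initial $\sigma$-field $\mathcal{F}_{0-}$ is itself taken to be $\mathbb{P}$-complete, so this is fine, but you should make that hypothesis explicit rather than relying only on completeness of $\mathcal{G}_0$. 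Apart from this detail, the proof is sound.
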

		
		\subsubsection{Inaccessible stopping times and connection to Meyer-measurable processes}	
	
		This paragraph follows \cite{EL80}, Section 3, p.510, and \cite{EL80}, Section 4, p.513. We give a small extension to those results in Proposition \ref{Main:778}.
		
		\begin{Def}[\citing{EL80}{Definition}{510}]\label{meyer_acc_def}
			For a Meyer-$\sigma$-field $\Lambda$ a random variable 
			$T:\Omega\rightarrow [0,\infty]$ is called \emph{$\Lambda$-accessible} if there exists
			a sequence of $\Lambda$-stopping times $(T_n)_{n\in \mathbb{N}}$ such that 
			\[
				\mathbb{P}\left(\bigcup_{n\in \NZ} \left\{T_n=T<\infty\right\}\right)=\mathbb{P}(T<\infty).
			\]
			The random variable $T$ is called \emph{totally $\Lambda$-inaccessible} if $\mathbb{P}(S=T<\infty)=0$ for all $\Lambda$-stopping times $S$.
		\end{Def}
		
		Now we have the following decomposition result:
		
		\begin{Thm}[\citing{EL80}{Theorem 6}{510}]\label{meyer_acc_Thm}
			Let $\Lambda$ be a Meyer-$\sigma$-field contained in another Meyer-$\sigma$-field $\bar{\Lambda}$. If $T$ is a $\bar{\Lambda}$-stopping time there exists a partition of $\{T<\infty\}$ into $A, I\in \FA_T^{\bar{\Lambda}}$ such that $T_A$ is
			a $\bar{\Lambda}$-stopping time $\Lambda$-accessible and $T_I$ is
			a $\bar{\Lambda}$-stopping time totally $\Lambda$-inaccessible. The sets $A,I$ are unique up to $\WM$-nulsets.
		\end{Thm}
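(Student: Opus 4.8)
The plan is to mimic the classical decomposition of a stopping time into its accessible and totally inaccessible parts, replacing optional/predictable sections by the Meyer notions of Definition~\ref{meyer_acc_def}. Throughout I would use that every $\Lambda$-stopping time $S$ is also a $\bar\Lambda$-stopping time, since $\stsetRO{S}{\infty}\in\Lambda\subset\bar\Lambda$; in particular $\{S=T\}\in\mathcal{F}^{\bar\Lambda}_T$ for every such $S$, and $\{T<\infty\}\in\mathcal{F}^{\bar\Lambda}_T$ because $T$ is a $\bar\Lambda$-stopping time.

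First I would introduce the class
\[
  \mathcal{A}:=\cbr{H\in\mathcal{F}^{\bar\Lambda}_T \midG H\subset\{T<\infty\},\ T_H \text{ is } \Lambda\text{-accessible}}
\]
and observe that it is stable under countable unions: if $(H_m)_{m\in\NZ}\subset\mathcal{A}$ carry covering sequences $(S_{m,k})_{k\in\NZ}\subset\stm$, then the countable family $(S_{m,k})_{m,k}$ witnesses accessibility of $T_{\bigcup_m H_m}$, since on $H_m$ one has $T_{\bigcup_m H_m}=T=T_{H_m}$. Setting $a:=\sup_{H\in\mathcal{A}}\mathbb{P}(H)\le 1$, I would take a maximizing sequence $(H_m)\subset\mathcal{A}$ and put $A:=\bigcup_m H_m\in\mathcal{A}$, so that $\mathbb{P}(A)=a$ and $T_A$ is $\Lambda$-accessible. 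Then I set $I:=\{T<\infty\}\setminus A\in\mathcal{F}^{\bar\Lambda}_T$.

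The key step is to show $T_I$ is totally $\Lambda$-inaccessible. Suppose not; then there is $S\in\stm$ with $\mathbb{P}(S=T_I<\infty)>0$. The set $B:=\{S=T<\infty\}\cap I$ lies in $\mathcal{F}^{\bar\Lambda}_T$ and equals $\{S=T_I<\infty\}$, so $\mathbb{P}(B)>0$; moreover $T_B$ is $\Lambda$-accessible, being covered by the single stopping time $S$. As $B\cap A=\emptyset$, the set $A\cup B$ belongs to $\mathcal{A}$ with $\mathbb{P}(A\cup B)=a+\mathbb{P}(B)>a$, contradicting maximality. Hence $T_I$ is totally $\Lambda$-inaccessible. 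Finally, since $A,I\in\mathcal{F}^{\bar\Lambda}_T$, the characterization of $\mathcal{F}^{\bar\Lambda}_T$ recalled in Remark~\ref{meyer_rem_1} shows that $T_A$ and $T_I$ are indeed $\bar\Lambda$-stopping times.

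For uniqueness, given a second decomposition $(A',I')$, I would note that $T_{A\cap I'}$ is simultaneously $\Lambda$-accessible (as a subset of the accessible part $A$) and totally $\Lambda$-inaccessible (as a subset of $I'$); comparing the covering property with the definition of total inaccessibility forces $\mathbb{P}(A\cap I')=0$, and symmetrically $\mathbb{P}(A'\cap I)=0$, so the partition is unique up to $\mathbb{P}$-nullsets. The main obstacle I anticipate is the exhaustion step: checking cleanly that $\Lambda$-accessibility is preserved under countable unions and under passage to measurable subsets, and that the essential supremum is attained inside $\mathcal{A}$; the secondary point requiring care is the measurability bookkeeping guaranteeing $B\in\mathcal{F}^{\bar\Lambda}_T$, which rests on $\{S=T\}\in\mathcal{F}^{\bar\Lambda}_T$ for the two $\bar\Lambda$-stopping times $S$ and $T$.
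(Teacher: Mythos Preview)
The paper does not give its own proof of this theorem: it is quoted verbatim as \citing{EL80}{Theorem 6}{510} in the survey part of Section~\ref{sec:meyer}, with no argument supplied. There is therefore nothing in the present paper to compare your proposal against.

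That said, your exhaustion argument is the standard route to this decomposition and is essentially what one finds in Lenglart's original treatment: build the accessible part by taking an essential supremum over all $H\in\mathcal{F}^{\bar\Lambda}_T$ with $T_H$ $\Lambda$-accessible, exploit closure of accessibility under countable unions to realize the supremum, and then argue by contradiction that the complement must be totally $\Lambda$-inaccessible. Your bookkeeping is fine; the two points you flag as delicate (closure under countable unions and passage to subsets, and the membership $\{S=T\}\in\mathcal{F}^{\bar\Lambda}_T$) are indeed the places requiring care, but both go through using the characterization of $\mathcal{F}^{\bar\Lambda}_T$ in Remark~\ref{meyer_rem_1} together with the fact that $\Lambda\subset\bar\Lambda$ makes every $\Lambda$-stopping time a $\bar\Lambda$-stopping time.
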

		
		The next result characterizes $\Lambda$-measurability of a process via its sections at certain stopping times.
		
		\begin{Thm}[Compare \citing{EL80}{Theorem 13}{513}]\label{meyer_acc_Thm_2}
			Let $\Lambda$ be $\mathbb{P}$-complete and denote by $\FA:=(\FA_t)_{t\geq 0}$ a filtration satisfying the usual conditions such that $\mathcal{P}(\FA)\subset \Lambda\subset \mathcal{O}(\FA)$ (see Theorem \ref{meyer_thm_4}).
			
			A c\`adl\`ag process $X$ is $\Lambda$-measurable if and only if it satisfies the following two conditions:
			\begin{enumerate}
				\item[(i)] For any finite $\Lambda$-stopping time $T$, $X_T$ is $\mathcal{F}^\Lambda_T$-measurable.
				\item[(ii)] For any totally $\Lambda$-inaccessible $\FA$-stopping time $T$, $\Delta X_T=0$ a.s.. 
			\end{enumerate}
		\end{Thm}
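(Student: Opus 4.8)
The plan is to prove the two implications separately, handling necessity quickly and devoting the real effort to sufficiency, where the interplay of (i), (ii) and the accessibility decomposition does all the work. Throughout I write $X_-$ for the left-limit process $t\mapsto X_{t-}$ and $\Delta X=X-X_-$ for the jump process, and I will repeatedly use that $X_-$ is left-continuous and $\mathcal{F}^\Lambda$-adapted, hence $\mathcal{P}(\mathcal{F}^\Lambda)$-measurable and therefore $\Lambda$-measurable by Theorem~\ref{meyer_thm_1}.

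\emph{Necessity.} Assume $X$ is càdlàg and $\Lambda$-measurable. Condition (i) is immediate, since $\mathcal{F}^\Lambda_T=\sigma(Z_T\mid Z\ \Lambda\text{-measurable})$ has $X_T$ among its generators. For (ii), note that $\Delta X=X-X_-$ is $\Lambda$-measurable and that its support $\{\Delta X\neq 0\}$ is a thin $\Lambda$-set. Applying the Meyer Section Theorem~\ref{meyer_thm_2} repeatedly, I would exhaust this support, up to an evanescent set, by a sequence of graphs $\stsetG{S_n}$ of $\Lambda$-stopping times $S_n\in\stm$. For a totally $\Lambda$-inaccessible $\mathbb{F}$-stopping time $T$, Definition~\ref{meyer_acc_def} gives $\mathbb{P}(T=S_n<\infty)=0$ for every $n$, so that $\mathbb{P}(\Delta X_T\neq 0,\ T<\infty)\le\sum_n\mathbb{P}(T=S_n<\infty)=0$, which is (ii).

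\emph{Sufficiency.} Assume now that the càdlàg process $X$ satisfies (i) and (ii). Taking deterministic times in (i) shows $X$ is $\mathcal{F}^\Lambda$-adapted, hence optional. Since $X_-$ is already $\Lambda$-measurable, it suffices to show that $\Delta X$ is $\Lambda$-measurable and to conclude via $X=X_-+\Delta X$. The support $\{\Delta X\neq 0\}$ is now a thin \emph{optional} set, so I can write $\{\Delta X\neq 0\}=\bigcup_n\stsetG{T_n}$ with $\mathbb{F}$-stopping times $T_n$. Applying the decomposition Theorem~\ref{meyer_acc_Thm} with $\bar\Lambda=\mathcal{O}(\mathbb{F})$, I split each graph $\stsetG{T_n}$ into its $\Lambda$-accessible part $\stsetG{(T_n)_{A_n}}$ and its totally $\Lambda$-inaccessible part $\stsetG{(T_n)_{I_n}}$. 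On the latter, condition (ii) forces $\Delta X=0$ almost surely, so up to evanescence all jumps sit on the $\Lambda$-accessible parts; by Definition~\ref{meyer_acc_def} these are covered by graphs of $\Lambda$-stopping times. After disjointifying (using $\mathbb{P}$-completeness of $\Lambda$ together with Remark~\ref{meyer_rem_1} to restrict $\Lambda$-stopping times to $\mathcal{F}^\Lambda$-measurable sets), I obtain $\Lambda$-stopping times $(S_k)$ with pairwise disjoint graphs such that $\{\Delta X\neq 0\}\subset\bigcup_k\stsetG{S_k}$ up to an evanescent set.

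It then remains to reassemble $\Delta X$ as a $\Lambda$-measurable process. On each $\{S_k<\infty\}$ the height $H_k:=\Delta X_{S_k}=X_{S_k}-X_{S_k-}$ is $\mathcal{F}^\Lambda_{S_k}$-measurable: $X_{S_k}$ by (i), and $X_{S_k-}=(X_-)_{S_k}$ because $X_-$ is $\Lambda$-measurable. The single-graph process $H_k\,\setind{\stsetG{S_k}}$ is $\Lambda$-measurable (for $H=Z_{S_k}$ with $Z$ $\Lambda$-measurable one has $H\,\setind{\stsetG{S_k}}=Z\,\setind{\stsetG{S_k}}$, and a monotone-class argument extends this to arbitrary $\mathcal{F}^\Lambda_{S_k}$-measurable $H$). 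As the graphs are disjoint, $\sum_k H_k\,\setind{\stsetG{S_k}}$ converges pointwise and equals $\Delta X$ up to evanescence, so $\Delta X$, and hence $X=X_-+\Delta X$, is $\Lambda$-measurable; the value at $0$ is absorbed by taking $S_0\equiv 0$. The main obstacle is precisely the second step of sufficiency: converting the purely optional covering of the jump set into a covering by $\Lambda$-stopping-time graphs. This is exactly where (ii) must be married to the accessibility decomposition of Theorem~\ref{meyer_acc_Thm}, and where $\mathbb{P}$-completeness of $\Lambda$ is indispensable, both to upgrade "$\Lambda$-accessible" from a probabilistic covering to an actual graph covering up to evanescence and to disjointify the resulting $\Lambda$-stopping times.
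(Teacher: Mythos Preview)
The paper does not actually prove this theorem: it is stated as a survey result with the citation ``Compare \cite{EL80}, Theorem~13, p.~513'' and no proof is supplied in the present paper. So there is no ``paper's own proof'' to compare against.

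That said, your argument is correct and follows what is in fact the standard route for this kind of characterization (and is essentially how Lenglart proceeds): in the necessity direction you exhaust the thin $\Lambda$-measurable jump set $\{\Delta X\neq 0\}$ by graphs of $\Lambda$-stopping times via the Meyer Section Theorem and then use the definition of total $\Lambda$-inaccessibility; in the sufficiency direction you reduce to $\Delta X$ via the $\Lambda$-measurability of the predictable process $X_-$, cover the optional jump set by graphs of $\mathcal{F}$-stopping times, invoke the accessibility decomposition of Theorem~\ref{meyer_acc_Thm} with $\bar\Lambda=\mathcal{O}(\mathcal{F})$, kill the inaccessible parts via (ii), and then rebuild $\Delta X$ from the $\Lambda$-accessible pieces using (i) and $\mathbb{P}$-completeness. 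This is exactly the right structure and the identification of where $\mathbb{P}$-completeness is needed is accurate.

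Two small points. First, a notational slip: throughout sufficiency you write $\mathbb{F}$ for the filtration, but in this paper $\mathbb{F}$ is the ambient $\sigma$-field and the filtration is $\mathcal{F}=(\mathcal{F}_t)_{t\ge 0}$; so ``$\mathbb{F}$-stopping time'' and ``$\bar\Lambda=\mathcal{O}(\mathbb{F})$'' should read $\mathcal{F}$-stopping time and $\mathcal{O}(\mathcal{F})$. Second, condition (i) is stated for \emph{finite} $\Lambda$-stopping times, whereas your $S_k$ need not be finite; this is harmless since only $H_k\mathbb{1}_{\{S_k<\infty\}}$ enters the graph process, and the required measurability on $\{S_k<\infty\}$ follows by approximating with $S_k\wedge n$, but it is worth making explicit.
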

		
		As a generalization of (ii) in the above result we note the following observation:
	
		\begin{Pro}\label{Main:778}
			In the setting of Theorem \ref{meyer_acc_Thm_2}, we have $\WM$-almost surely for any $\Lambda$-measurable bounded process $C$
					\[
				C_T=(^\mathcal{P} C)_T
			\]
			at any totally $\Lambda$-inaccessible $\mathcal{F}$-stopping time $T$.
			\end{Pro}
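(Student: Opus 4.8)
The plan is to reduce the statement, via a functional monotone class argument, to the case of a c\`adl\`ag process $C$, and to settle that case using Theorem~\ref{meyer_acc_Thm_2}(ii) together with the elementary structure of the predictable projection at totally inaccessible times. Two preliminary observations set this up. Since $\mathcal{P}(\mathcal{F})\subset \Lambda$, every predictable process is $\Lambda$-measurable; in particular $^{\mathcal{P}}C$ is itself $\Lambda$-measurable, so the asserted identity compares two $\Lambda$-measurable processes. Moreover, again because predictable $\mathcal{F}$-stopping times are $\Lambda$-stopping times, a totally $\Lambda$-inaccessible $\mathcal{F}$-stopping time $T$ is in particular totally inaccessible in the classical sense relative to $\mathcal{F}$, i.e. $\mathbb{P}(R=T<\infty)=0$ for every predictable $\mathcal{F}$-stopping time $R$. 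This is the only way totally $\Lambda$-inaccessibility will enter.

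For c\`adl\`ag $C$ I would argue as follows. On the one hand, Theorem~\ref{meyer_acc_Thm_2}(ii) gives $\Delta C_T=0$ a.s., hence $C_T=C_{T-}$ on $\{T<\infty\}$. On the other hand I claim that $(^{\mathcal{P}}C)_T=C_{T-}$ a.s. To see this, write $C=C_{-}+\Delta C$, where $C_{-}$ is left-continuous and adapted, hence predictable, so that $^{\mathcal{P}}(C_{-})=C_{-}$ and therefore $^{\mathcal{P}}C=C_{-}+{}^{\mathcal{P}}(\Delta C)$. The jump process $\Delta C$ is optional and carried by a thin set $\bigcup_n \stsetG{S_n}$; splitting each $S_n$ into its accessible and its totally inaccessible part and invoking the predictable section theorem, one checks that $^{\mathcal{P}}(\Delta C)$ is indistinguishable from a process carried by a countable union of graphs of predictable stopping times. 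Indeed, the totally inaccessible pieces project to zero, since a predictable time meets a totally inaccessible graph only on a $\mathbb{P}$-null set, while the accessible pieces remain on graphs of predictable times. As $T$ avoids every predictable stopping time up to a null set, this gives $(^{\mathcal{P}}(\Delta C))_T=0$ a.s., whence $(^{\mathcal{P}}C)_T=(C_{-})_T=C_{T-}$. Combining the two displays yields $C_T=C_{T-}=(^{\mathcal{P}}C)_T$.

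To remove the c\`adl\`ag hypothesis I would use a functional monotone class argument. The family $\mathcal{H}$ of bounded $\Lambda$-measurable processes $C$ with $C_T=(^{\mathcal{P}}C)_T$ $\mathbb{P}$-a.s. contains the constants and is a vector space, since $C\mapsto {}^{\mathcal{P}}C$ is linear. It is closed under bounded monotone limits, because the predictable projection commutes with such limits (conditional monotone convergence applied at predictable sections), so that the identities $C^n_T=(^{\mathcal{P}}C^n)_T$ pass to the limit $\mathbb{P}$-a.s. on $\{T<\infty\}$. By the previous step $\mathcal{H}$ contains all bounded c\`adl\`ag $\Lambda$-measurable processes; these form a multiplicative class and, by the very definition of a Meyer-$\sigma$-field, generate $\Lambda$. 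The functional monotone class theorem then shows that $\mathcal{H}$ comprises every bounded $\Lambda$-measurable process, which is the claim.

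The main obstacle is the middle step, the identification $(^{\mathcal{P}}C)_T=C_{T-}$ at the totally inaccessible time $T$: it is here that one must control the predictable projection of the jump process $\Delta C$ and verify, through the accessible/totally inaccessible decomposition and the predictable section theorem, that this projection does not charge the graph $\stsetG{T}$. Once this is in place, the reduction to Theorem~\ref{meyer_acc_Thm_2}(ii) and the monotone class extension are routine.
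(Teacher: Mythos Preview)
Your proof is correct and follows essentially the same strategy as the paper's: a monotone class reduction combined with the accessible/totally inaccessible decomposition to show that the predictable projection of the jump part vanishes at $T$, and then an appeal to Theorem~\ref{meyer_acc_Thm_2}(ii) for $C_T=C_{T-}$. The only difference is cosmetic: the paper takes as generating class the processes $\alpha\,\mathbb{1}_{\stsetRO{S}{\infty}}$ with $S\in\stm$ and $\alpha\in\mathcal{F}^\Lambda_S$, so that only a single stopping time $S$ needs to be split into its $\mathcal{P}$-accessible and totally $\mathcal{P}$-inaccessible parts, whereas you work with general c\`adl\`ag $\Lambda$-measurable processes and must first exhaust the thin set $\{\Delta C\neq 0\}$ by a sequence of stopping times before doing the same splitting.
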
			
			
			\begin{proof}
				The result follows by a monotone class argument, if we can show the result for 
				$C:=\alpha\mathbb{1}_{\stsetRO{S}{\infty}}$ for $S\in \stm$ and positive, bounded $\alpha \in \aFA_S$. In this situation, we can find by Theorem \ref{meyer_acc_Thm} two disjoint sets $A$, $I \in \aFA_S$ such that $\{S<\infty\}=A\cup I$, $S_I$ is totally $\mathcal{P}$-inaccessible and $S_A$ is $\mathcal{P}$-accessible. Hence, there exists a sequence of $\FA$-predictable stopping times $(S_n)_{n\in \NZ}$ such that 
				\[
					\mathbb{P}(\cup_{n\in \NZ}\{S_n=S_A<\infty\})=\WM(S_A<\infty).
				\]
				We assume without loss of generality that the graphs of the $(S_n)_{n\in \NZ}$ are disjoint, because otherwise
				we could consider the sequence of $\FA$-predictable stopping times 
				\[
					\tilde{S}_1:=S_1,\quad \tilde{S}_n:=\left(S_n\right)_{\cup_{k=1}^{n-1}\{S_n\neq S_k\}},\quad n=2,3,\dots.
				\]
				Now we have
				\begin{align}\label{Main:780}
					{^\mathcal{P}C}={^\mathcal{P} \left(\alpha \mathbb{1}_{\stsetG{S_A}}\right)}
					+{^\mathcal{P} \left(\alpha \mathbb{1}_{\stsetG{S_I}}\right)}
					+\alpha \mathbb{1}_{\stsetO{S}{\infty}}.
				\end{align}
				As $S_I$ is totally $\mathcal{P}$-inaccessible we get at any predictable stopping time $T$ 
				\[
				    \left(\alpha\mathbb{1}_{\stsetG{S_I}}\right)_T=\alpha\mathbb{1}_{\{S_I=T\}}=0\quad \text{a.s.,}
				\]
				and so, by Corollary \ref{meyer_cor_1} to Meyer's Section Theorem:
				\begin{align}\label{Main:781}
					{^\mathcal{P} \left(\alpha \mathbb{1}_{\stsetG{S_I}}\right)} \equiv 0.
				\end{align}
				Moreover, by monotonicity and additivity of the $\mathcal{P}$-projection
				 and because the graphs of the $(S_n)_{n\in \NZ}$ are disjoint we obtain 
				\[
					0\leq {^\mathcal{P} \left(\alpha \mathbb{1}_{\stsetG{S_A}}\right)}
					= \sum_{n=1}^\infty {^\mathcal{P} \left(\alpha \mathbb{1}_{\stsetG{S_n}}\right)}
					= \sum_{n=1}^\infty {^\mathcal{P} \left(\alpha \right)}\mathbb{1}_{\stsetG{S_n}},
				\]
				where we have used in the last step that $\mathbb{1}_{\stsetG{S_n}}$ is predictable. Hence, we obtain at the totally 
				$\mathcal{P}$-inaccessible $\FA$-stopping time $T$ that a.s.
				\begin{align}\label{Main:779}
					0\leq {^\mathcal{P} \left(\alpha \mathbb{1}_{\stsetG{S_A}}\right)}_T
					= \sum_{n=1}^\infty {^\mathcal{P} \left(\alpha \right)}\mathbb{1}_{\stsetG{S_n}}(T)=0.
				\end{align}
				Combining \eqref{Main:780} with \eqref{Main:781} and \eqref{Main:779} gives us
				\[
					\left({^\mathcal{P}C}\right)_T=\alpha \mathbb{1}_{\stsetO{S}{\infty}}(T)=C_{T-}=C_T,
				\]
				where the final identity follows from Theorem \ref{meyer_acc_Thm_2} as $T$ is  totally $\Lambda$-inaccessible. 
			\end{proof}
	
	\subsubsection{A Riesz representation} Let us conclude this section with a Riesz representation stated in \cite{EL80}, p.516,
	which was originally proven in the optional case in
	\citing{DM82}{Theorem 2}{184}.
	
	We denote by $\mathbb{G}$ a $\wedge$-stable
	vector space of processes, which satisfies the following:
	\begin{enumerate}[(i)]
		\item $\mathbb{G}$ contains the almost constant 
					processes, i.e. processes
					of the form $a\mathbb{1}_{(0,\infty]}$, $a\in \mathbb{R}$.
		\item All $Z\in \mathbb{G}$ are c\`agl\`ad with a limit
					at infinity such that $Z_+$ is $\Lambda$-measurable.
		\item	For any $\Lambda$-stopping time $T$ the process
		$\mathbb{1}_{\stsetO{T}{\infty}}$ is contained in $\mathbb{G}$.
	\end{enumerate}

	Then we have the following result:
	
	\begin{Thm}[\citing{EL80}{Theorem 18}{516}]
		Let $J$ be a positive linear form on $\mathbb{G}$ with the following
		property: For any non-increasing sequence $(Z^n)_{n\in \mathbb{N}}$
		of positive elements of $\mathbb{G}$, such that 
		\[
			\lim_{n\rightarrow\infty} \sup_{t\in [0,\infty]} Z^n_t=0
		\]
		we have  
		\[
			\lim_{n\rightarrow\infty} J(Z^n)=0.
		\]
		Then there exists two increasing, right-continuous processes
		$A$, $B$ with $\mathbb{E}[A_\infty]<\infty$,
		and $\mathbb{E}[B_\infty]<\infty$,
		where $A$ is $\mathcal{F}^\Lambda$-predictable, $A_0=0$
		and $B$ is $\Lambda$-measurable, purely discontinuous 
		with $\lim_{t\rightarrow \infty} B_{t}=B_{\infty}$ such that
		for any $Z\in \mathbb{G}$ we have
		\[
			J(Z)=\mathbb{E}\left[\int_{(0,\infty]} Z_s\mathrm{d} A_s
						+\int_{[0,\infty)} Z_{s+}\mathrm{d} B_s\right].
		\]
		The representating processes $A,B$ are unique up to indistinguishability.
	\end{Thm}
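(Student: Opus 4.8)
The plan is to realize $J$ as integration against a finite measure on $\Omega\times[0,\infty]$ and then to split that measure into an $\mathcal{F}^\Lambda$-predictable and a purely discontinuous $\Lambda$-measurable part, following the optional-case argument of \citing{DM82}{Theorem 2}{184} (see also \citing{EL80}{Theorem 18}{516}) but carried out relative to $\Lambda$. First I would set up a Daniell--Stone representation. Since $\mathbb{G}$ is a vector space that is $\wedge$-stable, it is automatically $\vee$-stable (because $Z\vee Z'=Z+Z'-Z\wedge Z'$), hence a vector lattice; together with the almost constant processes from (i) it is a Stone lattice. The hypothesis on $J$ --- that $J(Z^n)\to0$ whenever $Z^n\downarrow$ with $\sup_t Z^n_t\to0$ --- is exactly the continuity from above needed for the Daniell--Stone theorem, so $J$ extends to a measure $\mu$ on the $\sigma$-field generated by $\mathbb{G}$ on $\Omega\times[0,\infty]$, finite because $\mathbb{1}_{(0,\infty]}\in\mathbb{G}$ by (i). Reading the continuity hypothesis with $\mathbb{P}$-a.s.\ convergence, a $\mathbb{P}$-evanescent process is annihilated by $J$, so $\mu$ charges no set $N\times[0,\infty]$ with $\mathbb{P}(N)=0$; hence the projection of $\mu$ onto $\Omega$ is absolutely continuous with respect to $\mathbb{P}$, which is what lets the expectation $\mathbb{E}[\cdot]$ appear in the final formula.

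Next I would extract an increasing process from $\mu$. Condition (iii) puts every $\mathbb{1}_{\stsetO{T}{\infty}}$, $T\in\stm$, into $\mathbb{G}$, so $T\mapsto J(\mathbb{1}_{\stsetO{T}{\infty}})$ is defined on all $\Lambda$-stopping times; testing $J$ on these generators and invoking the section theorem (Theorem~\ref{meyer_thm_2} and Corollary~\ref{meyer_cor_1}) produces a c\`adl\`ag increasing $\Lambda$-measurable process $C$ of integrable total variation whose associated Stieltjes (Dol\'eans) measure is $\mu$. The values on the generators already pin $C$ down up to indistinguishability via Corollary~\ref{meyer_cor_1}, and this is the seed of the eventual uniqueness of the pair $(A,B)$.

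The decisive step is the decomposition of $C$ into the two pieces of the statement. I would take $A$ to be the dual $\mathcal{F}^\Lambda$-predictable projection of $C$ (so $A$ is $\mathcal{F}^\Lambda$-predictable and increasing with $A_0=0$ and $\mathbb{E}[A_\infty]<\infty$) and let $B$ collect the remaining jumps of $C$, which sit on graphs of $\Lambda$-stopping times and therefore form a purely discontinuous $\Lambda$-measurable increasing process. The asymmetry between $Z_s$ and $Z_{s+}$ is forced by measurability: against the predictable $A$ one integrates the left-continuous (hence $\mathcal{F}^\Lambda$-predictable) value $Z_s$, whereas against $B$, whose mass sits at $\Lambda$-stopping times, the correct integrand is the $\Lambda$-measurable right-limit $Z_{s+}$ supplied by condition (ii); at common jump times these two values genuinely differ, which is why the split is canonical. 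Matching both integrals to $\mu$ on the generators --- where the target identity reads
\[
J\left(\mathbb{1}_{\stsetO{T}{\infty}}\right)=\mathbb{E}\left[(A_\infty-A_T)+(B_\infty-B_{T-})\right]
\]
--- and extending by linearity and monotone limits inside the lattice $\mathbb{G}$ yields the formula for all $Z\in\mathbb{G}$.

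I expect this decomposition to be the main obstacle: separating the predictable part from the purely discontinuous $\Lambda$-measurable part, and rigorously justifying that the jumps of $C$ beyond its dual $\mathcal{F}^\Lambda$-predictable projection are carried by $\Lambda$-stopping times and must be read with the right-limit $Z_{+}$ (including the bookkeeping between $B_{T-}$ and $B_T$ forced by the half-open generators), requires the full Meyer machinery --- the accessible/totally inaccessible splitting of Theorem~\ref{meyer_acc_Thm}, the projection theorem (Theorem~\ref{meyer_thm_3}), the characterization in Theorem~\ref{meyer_thm_4}, and the identity of Proposition~\ref{Main:778} at totally $\Lambda$-inaccessible times. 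Uniqueness is then comparatively routine: if two pairs represent $J$, one first separates the predictable $A$-part from the purely discontinuous $\Lambda$-measurable $B$-part by their differing measurability, and then tests the differences against the generators $\mathbb{1}_{\stsetO{T}{\infty}}$ over all $T\in\stm$, so that Corollary~\ref{meyer_cor_1} forces $A=A'$ and $B=B'$ up to indistinguishability.
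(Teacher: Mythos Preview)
The paper does not supply its own proof of this theorem; it is quoted verbatim as \citing{EL80}{Theorem 18}{516}, with only the remark that the optional case was originally proved in \citing{DM82}{Theorem 2}{184}. There is therefore no in-paper argument to compare your sketch against beyond those two references, and your overall Daniell--Stone strategy is indeed the one those sources pursue.

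That said, the decomposition step in your sketch is not correct as written. Taking $A$ to be the dual $\mathcal{F}^\Lambda$-predictable projection of $C$ does not leave an increasing remainder: the difference $C-A$ is then a (local) martingale of finite variation, not a nonnegative increasing process, so the sentence ``let $B$ collect the remaining jumps of $C$'' has no content. The mechanism used in Lenglart's proof---and mirrored in the $\Lambda$-supermartingale decomposition of Proposition~\ref{optstop_pro_1}---is not compensation but a direct separation of the jumps of $C$ by type: the continuous part of $C$ together with its jumps carried by graphs of $\mathcal{F}^\Lambda$-predictable stopping times form $A$, while the jumps carried by graphs of $\Lambda$-stopping times that are not $\mathcal{F}^\Lambda$-predictable form the purely discontinuous $B$. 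The asymmetry $Z$ versus $Z_{+}$ in the two integrals then falls out because at the latter times the $\Lambda$-measurable section is $Z_{+}$ (hypothesis (ii)), whereas at predictable times the c\`agl\`ad $Z$ is already the right $\mathcal{F}^\Lambda$-predictable integrand. Your identification of this step as the main obstacle is accurate, but the tool you proposed for it does not do the job.
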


%%%%%%%%%%%%%%%%%%%%%%%%%%%%%%%%%%%%%%%%%%%%%%%%%%%%%%%%%%%%%%%%%%%%%%
%%%%%%%%%%%%%%%%%%%%%%%%%%%%%%%%%%%%%%%%%%%%%%%%%%%%%%%%%%%%%%%%%%%%%%

\subsection{El Karoui's general theory of optimal stopping}\label{sec:optstop}

	%%%%%%%%%%%%%%%%%%%%%%%%%%%%%%%%%%%%%%%%%%%%%%%%%%%%%%%%%%%%%%%%%%
	%%%%%%%%%%%%%%%%%%%%%%%%%%%%%%%%%%%%%%%%%%%%%%%%%%%%%%%%%%%%%%%%%%

	Let us henceforth consider a fixed $\mathbb{P}$-complete 
	Meyer-$\sigma$-field $\Lambda\subset \mathbb{F}\otimes
	\mathcal{B}([0,\infty))$
	with a given complete probability space
	$(\Omega,\mathbb{F},\mathbb{P})$. The upcoming part is based on \cite{EK81}, p.117-143, and we will just give
	a summary of the results stated in there for the special case where the considered ``weak chronology'' is given by
	$\stm$.

	\subsubsection{$\Lambda$-supermartingales}
	\label{optstop_ssec_1}	
		
		In this section we develop the notion of supermartingales
		for Meyer-$\sigma$-fields. As we will
		only state a small part of the results on such $\Lambda$-supermartingales
		we refer the interested reader to \cite{EL80},
		Chapter~III, for a full account.
		
		\begin{Def}[\citing{EK81}{Definition 2.25.2}{121}]
				\label{optstop_def_1}
				A family of random variables $(Z(S))_{S\in 
				\stm}$
				is a $\stm$\emph{-system}, if 
				\begin{enumerate}[(i)]
					\item For all $S$ and $T$ in 
					$\stm$,
					we have $Z(S)=Z(T)$ a.s.
					on $\{S=T\}$.
					\item $Z(S)$ is $\mathcal{F}^\Lambda_S$-measurable
					for all $S\in \stm$. 
				\end{enumerate}
				A process $Z:\Omega\times [0,\infty]\rightarrow
				\mathbb{R}$ aggregates a given
				$\stm$-system 
				$(Z(S))_{S\in \stm}$, if it
				is $\Lambda$-measurable and $Z_S=Z(S)$ almost surely 
				for all $S\in \stm$.
		\end{Def}
		
		\begin{Rem}[\cite{EK81}, p.121]\label{optstop_rem_1}
			The process which aggregates 
			$(Z(S))_{S\in \stm}$
			is unique up to indistinguishability by Corollary 
			\ref{meyer_cor_1}.
		\end{Rem}		
		
		Next we introduce the notion of super- and
		submartingales for the previous sets of random variables.		
		
		\begin{Def}[\cite{EK81}{Definition 2.25.3}{122}]
				\label{optstop_def_2}
				\begin{enumerate}[a)]
					\item We call an $\stm$-system 
								$(Z(S))_{S\in \stm}$
								an $\stm$-supermartingale system, if
								\begin{enumerate}[(i)]
									\item $Z(S)$ is integrable for 
									all $S\in \stm$.
									\item $Z(S)\geq \mathbb{E}[Z(T)|
									\mathcal{F}^\Lambda_S]$ a.s. for all $S,T\in
									\stm$ 
									with $S\leq T$.
								\end{enumerate}
								Analogously we define 
								$\stm$-martingale systems, 
								if we have equality in (ii). 
					\item A $\Lambda$-measurable process $Z:\Omega\times [0,\infty]
								\rightarrow \mathbb{R}$ is called a 
								$\Lambda$-supermartingale
								if the $\stm$-system $(Z_S)_{S\in
								\stm}$ is an 
								$\stm$-supermartingale system.
								Analogously, 
								we call $Z$ a $\Lambda$-martingale if $(Z_S)_{S
								\in \stm}$ is an 
								$\stm$-martingale system.
				\end{enumerate}
		\end{Def}		
				
		Next we get a statement concerning aggregation and decomposition 
		of $\Lambda$-super\-martingales. 	
		
		\begin{Pro}[Compare \citing{EK81}{Proposition 2.26}{123}]
		\label{optstop_pro_1}
			Let $(Z(S))_{S\in \stm}$ 
			be an $\stm$-supermartingale system.
			\begin{enumerate}[(i)]
				\item There exists a $\Lambda$-supermartingale $Z$
							unique up to indistinguishability,
							which aggregates $(Z(S))_{S\in \stm}$,
							i.e. 
							for all $S\in \stm$ we have 
							$Z_S=Z(S)$ almost surely.						
				\item Assume that the $\stm$-system
				$(Z(S))_{S\in \stm}$ 
				is of class($\text{D}^\Lambda$), i.e. $\{Z(S)|S\in \stm\}$ is uniformly integrable.				
						Then the 
							$\Lambda$-supermartingale $Z$ from (i) is of
							class($\text{D}^\Lambda$), i.e. 
								$(Z_S)_{S\in \stm}$ 
				is of class($\text{D}^\Lambda$), and it has the following
							unique decomposition
							\[
								Z=M-A-B_-,
							\]
							where $M:\Omega\times [0,\infty]\rightarrow \mathbb{R}$ 
							is a $\Lambda$-martingale of class($D^\Lambda$), 
							$A$ is a non-decreasing, right-continuous process
							which is $\mathcal{F}^\Lambda$-predictable with $A_0=0$, 
							$\mathbb{E}[A_\infty]
							<\infty$ and
							$B$ is a non-decreasing,
							 right-continuous, $\Lambda$-measurable process
							which is purely
							discontinuous, $B_{0-}=0$, $B_\infty=B_{\infty-}$, and $\mathbb{E}[B_\infty]
							<\infty$.							
			\end{enumerate}
		\end{Pro}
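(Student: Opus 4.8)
The plan is to establish uniqueness once and for all, since it underlies both parts and is essentially immediate. If $Z$ and $Z'$ are two $\Lambda$-measurable processes aggregating the same system, then $Z_T = Z(T) = Z'_T$ almost surely for every bounded $T\in\stm$, so Corollary~\ref{meyer_cor_1} to Meyer's Section Theorem forces $Z$ and $Z'$ to be indistinguishable. Throughout I would rely on this principle: to identify two $\Lambda$-measurable processes it suffices to compare their values at all $\Lambda$-stopping times.

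For existence in~(i) I would first restrict the system to deterministic times. The associated filtration $\mathcal{F}^\Lambda$ satisfies the usual conditions (Theorem~\ref{meyer_thm_4}), and the family $(Z(t))_{t\geq 0}$ is then an ordinary $\mathcal{F}^\Lambda$-supermartingale, so Doob's regularisation along the rationals furnishes the right limits $Z^{+}_t:=\lim_{q\downarrow t}Z(q)$, a c\`adl\`ag and hence optional supermartingale. The delicate point is that $Z^{+}$ recovers the system only up to the (countably many) times where $t\mapsto\mathbb{E}[Z(t)]$ fails to be right-continuous, so in general $Z^{+}$ merely bounds the desired aggregator from below. I would absorb the nonnegative discrepancy $Z(S)-Z^{+}_S$, which is carried on a thin predictable set, into a correction built from the Projection Theorem~\ref{meyer_thm_3}, and then verify that the resulting process is genuinely $\Lambda$-measurable and matches $Z(S)$ at every $S\in\stm$ by the section theorem. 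Upgrading optionality to $\Lambda$-measurability and checking all sections, not merely the deterministic ones, is the technical heart of part~(i).

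For~(ii), class($D^\Lambda$) of $Z$ is immediate from $Z_S=Z(S)$ and the hypothesis on the system. To obtain the decomposition I would pass to the potential: class($D^\Lambda$) makes the terminal value $Z(\infty)$ integrable, I aggregate the martingale system $S\mapsto\mathbb{E}[Z(\infty)\mid\mathcal{F}^\Lambda_S]$ into a $\Lambda$-martingale $\bar M$, and set $Y:=Z-\bar M\geq 0$, a positive $\Lambda$-supermartingale with $Y_\infty=0$. The crux is to represent this potential by an increasing process and to split it into an $\mathcal{F}^\Lambda$-predictable piece and a $\Lambda$-measurable purely discontinuous piece. For this I would invoke the Riesz representation theorem concluding Section~\ref{sec:meyer}: from $Y$ one constructs a positive linear form $J$ on the c\`agl\`ad test space $\mathbb{G}$, whose values on the processes $\mathbb{1}_{\stsetO{T}{\infty}}$ encode the potential increments, verifies that class($D^\Lambda$) yields the continuity of $J$ along sequences decreasing uniformly to zero, and reads off the increasing processes $A$ (predictable, $A_0=0$, $\mathbb{E}[A_\infty]<\infty$) and $B$ ($\Lambda$-measurable, purely discontinuous, $B_{0-}=0$, $B_\infty=B_{\infty-}$, $\mathbb{E}[B_\infty]<\infty$) together with the representation $J(\zeta)=\mathbb{E}\!\left[\int_{(0,\infty]}\zeta_s\,\mathrm{d}A_s+\int_{[0,\infty)}\zeta_{s+}\,\mathrm{d}B_s\right]$.

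It then remains to take $M$ to be the $\Lambda$-martingale aggregating $S\mapsto\mathbb{E}[Z(\infty)+A_\infty+B_\infty\mid\mathcal{F}^\Lambda_S]$ and to confirm $Z=M-A-B_-$ by evaluating both sides at an arbitrary $S\in\stm$, using the representation of $J$ to compute $\mathbb{E}[Y_S]$ and then appealing once more to Corollary~\ref{meyer_cor_1}; uniqueness of the decomposition follows from the uniqueness clause of the Riesz theorem together with the same section argument. I expect the genuine difficulty to lie in part~(ii): verifying the continuity hypothesis of the Riesz representation out of class($D^\Lambda$), and matching the left-limit process $B_-$ against the right-limit integrand $\zeta_{s+}$ so that the sections of $M-A-B_-$ reproduce $Z(S)$ at both predictable and $\Lambda$-accessible jump times. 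This bookkeeping of which jumps are carried by the predictable $A$ and which by the purely discontinuous $B$ is exactly where the Meyer-$\sigma$-field framework departs from the classical Doob--Meyer decomposition.
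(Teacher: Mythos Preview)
The paper does not contain a proof of this proposition. Proposition~\ref{optstop_pro_1} sits in the survey section~\ref{optstop_ssec_1} and is stated with the attribution ``Compare \citing{EK81}{Proposition 2.26}{123}'' and nothing more; the text moves on directly to notation and Proposition~\ref{optstop_pro_2}. There is therefore no proof in the paper to compare your proposal against.

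That said, your outline is broadly in the right spirit for how the result is obtained in the literature. A few remarks. For part~(i), your appeal to Theorem~\ref{meyer_thm_4} to justify that $\mathcal{F}^\Lambda$ satisfies the usual conditions is not quite right: that theorem gives a filtration satisfying the usual conditions with $\mathcal{P}\subset\Lambda\subset\mathcal{O}$, but this filtration need not be $\mathcal{F}^\Lambda$ itself; rather, by Lemma~\ref{extension_lem_1}, it is $\mathcal{F}^\Lambda_+$ that plays this role. More importantly, the step where you ``absorb the nonnegative discrepancy $Z(S)-Z^+_S$'' via Theorem~\ref{meyer_thm_3} is left entirely programmatic, and this is precisely where one has to do real work to pass from an optional aggregator to a $\Lambda$-measurable one. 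For part~(ii), your plan to reduce to a potential and invoke the Riesz representation of Section~\ref{sec:meyer} is the correct route and is essentially how \cite{EL80} and \cite{EK81} proceed; the construction of the linear form $J$ from $Y$ (which you do not spell out) is the substantive step, typically done via $J(\mathbb{1}_{\stsetO{T}{\infty}}):=\mathbb{E}[Y_T]$ and linear extension.
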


		\emph{Notation:} If a process $Z$ has a left- or a right limit,
		then we define by $Z_+$ the right-limit process and
		by $Z_-$ the left-limit process $Z_{\infty+}:=Z_\infty$.
		More generally, if a process $Z$ is just $\Lambda$-measurable,
		we will often use for $t\in [0,\infty)$ the notation
		\begin{align}\label{optstop_eq_5}
				\lsr{Z}{t}(\omega)&:=\limsup_{s\downarrow t} Z_s(\omega)
						:=\lim_{n\rightarrow \infty} \sup_{s\in (t,t+\frac1n)} 
						Z_s(\omega),  &\lsr{Z}{\infty}(\omega)
						:=Z_{\infty}(\omega),\nonumber\\
						Z_{t\ast}(\omega)&:=\liminf_{s\downarrow t} Z_s(\omega)
						:=\lim_{n\rightarrow \infty} \inf_{s\in (t,t+\frac1n)} 
						Z_s(\omega),  &{Z}_{\infty\ast}(\omega)
						:=Z_{\infty}(\omega),
			\end{align}
			and, for $t\in (0,\infty)$,
			\begin{align}\label{optstop_eq_6}
				&\lsl{Z}_t(\omega):=\limsup_{s\uparrow t} Z_s(\omega)
						:=\lim_{n\rightarrow \infty} \sup_{s\in (t-\frac1n,t)
						\cap [0,\infty)} Z_s(\omega),\nonumber \\
					& \lsl{Z}_{0}(\omega):=Z_0(\omega), \quad
					\lsl{Z}_{\infty}(\omega):=\limsup_{t
					\uparrow \infty} Z_t(\omega):=\lim_{n\rightarrow \infty}
					\sup_{s\in [n,\infty)} Z_s(\omega),\nonumber\\					
					&{_\ast Z}_t(\omega):=\liminf_{s\uparrow t} Z_s(\omega)
						:=\lim_{n\rightarrow \infty} \inf_{s\in (t-\frac1n,t)
						\cap [0,\infty)} Z_s(\omega),\nonumber \\
					& {_\ast Z}_{0}(\omega):=Z_0(\omega), 
					\quad {_\ast Z}_{\infty}(\omega):=\liminf_{t
					\uparrow \infty} Z_t(\omega):=\lim_{n\rightarrow \infty}
					\inf_{s\in [n,\infty)} Z_s(\omega)
			\end{align}				
		Furthermore we denote by $^ \Lambda Z$ the $\Lambda$-projection 
		and by $^\mathcal{P} Z$ the $\mathcal{F}^\Lambda$-predictable projection
		of $Z$ if they exist; we furthermore follow the convention $^ \Lambda Z_{\infty}:=Z_{\infty}$.
		
		\begin{Rem}\label{optstop_rem_3}
				By \citing{DM78}{Theorem 90}{143}, the process $\lsr{Z}{}$ is $\mathcal{F}^\Lambda$-progressively
				measurable and $\lsl{Z}$ is an 
				$\mathcal{F}^\Lambda$-predictable process.
				In general, $\lsr{Z}{}$ is not 
				$\Lambda$-measurable, not even $\mathcal{F}^\Lambda$-optional,
				anymore, which can be seen by \citing{DM78}{Remark 91 (b)}{144}.
		\end{Rem}
		
		In the last proposition of this section we state results
		on the path of a $\Lambda$-supermartingale 
		and detailed results on its points of discontinuity.
		
		\begin{Pro}[Compare \citing{EK81}{Proposition 2.27}{125}]\label{optstop_pro_2}
			\begin{enumerate}[(i)]
				\item We have that every $\Lambda$-martingale 
				$M:[0,\infty]\rightarrow \mathbb{R}$ is l\`adl\`ag. 
				The process 
			$M_+$ is an $\mathcal{F}^\Lambda_+$-martingale whose
			$\Lambda$-projection is equal to $M$, i.e.
			\[
				M={^\Lambda\left(M_+\right)}.
			\]
				\item		Every $\Lambda$-supermartingale 
				$Z$ of class($\text{D}^\Lambda$) is l\`adl\`ag. 
				Furthermore such a $Z$ is the
			$\Lambda$-projection of an 
			$\mathcal{O}(\mathcal{F}^\Lambda_+)$-supermartingale
			$\hat{Z}$ and
			\[
				\hat{Z}_+=Z_+.
			\]
			For the discontinuities $\Delta A:=A-A_-$
			and $\Delta B:=B-B_-$ of the non-decreasing, right-continuous processes $A$ and $B$ of 
			the decomposition $Z=M-A-B_-$ from Proposition 
			\ref{optstop_pro_1}, we have
			\[
				\Delta A=Z_--{^\mathcal{P} Z},\quad
				\Delta B=Z- {^\Lambda (Z_+)}.
			\]
			In particular, $Z_-\geq {^\mathcal{P} Z}$ and $Z\geq {^\Lambda(Z_+)}$.
			\end{enumerate}
		\end{Pro}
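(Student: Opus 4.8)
The plan is to treat the two parts in order, using the martingale case~(i) as the engine for the supermartingale case~(ii). Throughout I fix, via Theorem~\ref{meyer_thm_4}, a filtration satisfying the usual conditions with $\mathcal{P}(\mathcal{F}^\Lambda)\subset\Lambda\subset\mathcal{O}(\mathcal{F}^\Lambda)$, so that every $S\in\stm$ is in particular an $\mathcal{F}^\Lambda_+$-stopping time and $\mathcal{F}^\Lambda_{S-}\subset\mathcal{F}^\Lambda_S\subset\mathcal{F}^\Lambda_{S+}$ (Remark~\ref{meyer_rem_1}).

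For~(i), since $\infty\in\stm$, the martingale-system property gives the closure $M_S=\mathbb{E}[M_\infty\mid\mathcal{F}^\Lambda_S]$ for every $S\in\stm$; in particular $(M_t)_{t\geq0}$ is a classical $(\mathcal{F}^\Lambda_t)$-martingale closed by $M_\infty$. First I would regularise: by Doob's upcrossing inequality applied to $(M_q)_{q\in\mathbb{Q}_+}$, on a set of full measure all two-sided limits along the rationals exist, so that $M_+:=\lim_{q\downarrow\,\cdot}M_q$ is a c\`adl\`ag $\mathcal{F}^\Lambda_+$-martingale (equal to $\mathbb{E}[M_\infty\mid\mathcal{F}^\Lambda_{\cdot+}]$). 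To see that $M$ itself is l\`adl\`ag I would identify the genuine one-sided limits along the reals with conditional expectations: for $s\downarrow t$ and $s\uparrow t$, L\'evy's downward and upward convergence theorems turn $M_s=\mathbb{E}[M_\infty\mid\mathcal{F}^\Lambda_s]$ into $\mathbb{E}[M_\infty\mid\mathcal{F}^\Lambda_{t+}]$ resp.\ $\mathbb{E}[M_\infty\mid\mathcal{F}^\Lambda_{t-}]$, which shows the right and left limits exist and that $M_+$ is the right-limit process. Finally, optional sampling for the c\`adl\`ag martingale $M_+$ together with the tower property gives, for every finite $S\in\stm$,
\[
\mathbb{E}\bigl[(M_+)_S\mid\mathcal{F}^\Lambda_S\bigr]
=\mathbb{E}\bigl[\mathbb{E}[M_\infty\mid\mathcal{F}^\Lambda_{S+}]\mid\mathcal{F}^\Lambda_S\bigr]
=\mathbb{E}[M_\infty\mid\mathcal{F}^\Lambda_S]=M_S,
\]
so $M={}^\Lambda(M_+)$ by the uniqueness in Theorem~\ref{meyer_thm_3}.

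For~(ii) I would start from the decomposition $Z=M-A-B_-$ of Proposition~\ref{optstop_pro_1}. Since $M$ is l\`adl\`ag by~(i), $A$ is c\`adl\`ag and $B_-$ is c\`agl\`ad, $Z$ is l\`adl\`ag, and a direct passage to the limit gives the path formulas $Z_-=M_--A_--B_-$ and $Z_+=M_+-A-B$. Setting $\hat Z:=M_+-A-B_-$ produces an $\mathcal{O}(\mathcal{F}^\Lambda_+)$-measurable process (as $M_+,A$ are c\`adl\`ag adapted and $B_-$ is c\`agl\`ad adapted) which is a supermartingale, being the difference of the martingale $M_+$ and the non-decreasing adapted process $A+B_-$; its right-limit process is $M_+-A-B=Z_+$, giving $\hat Z_+=Z_+$, and evaluating $\mathbb{E}[\hat Z_S\mid\mathcal{F}^\Lambda_S]=M_S-A_S-B_{S-}=Z_S$ (using~(i) for the $M_+$-term) shows $Z={}^\Lambda\hat Z$.

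It remains to read off the jump formulas, which I would do by evaluating the two projections at stopping times and invoking the appropriate section theorem. At a finite $S\in\stm$, using that $A_S,B_S,B_{S-}$ are $\mathcal{F}^\Lambda_S$-measurable and ${}^\Lambda(M_+)=M$, one gets $\bigl({}^\Lambda(Z_+)\bigr)_S=M_S-A_S-B_S$, whence $Z_S-({}^\Lambda(Z_+))_S=\Delta B_S$; since both sides are $\Lambda$-measurable, Corollary~\ref{meyer_cor_1} yields $\Delta B=Z-{}^\Lambda(Z_+)$. At a predictable $T$, using that $A_T,B_{T-}$ are $\mathcal{F}^\Lambda_{T-}$-measurable and that ${}^\mathcal{P}M=M_-$ (which follows from $M_{T-}=\mathbb{E}[M_\infty\mid\mathcal{F}^\Lambda_{T-}]$ as above), one gets ${}^\mathcal{P}Z_T=M_{T-}-A_T-B_{T-}$, whence $Z_{T-}-{}^\mathcal{P}Z_T=\Delta A_T$; as $Z_-,{}^\mathcal{P}Z,\Delta A$ are all $\mathcal{F}^\Lambda$-predictable (Remark~\ref{optstop_rem_3}), the predictable section theorem gives $\Delta A=Z_--{}^\mathcal{P}Z$. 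The two inequalities then follow from $\Delta A\geq0$ and $\Delta B\geq0$. The main obstacle is the path-regularity/limit-identification step in~(i): obtaining honest l\`adl\`ag paths (with a single exceptional null set valid for all $t$) and matching the real one-sided limits to the conditional expectations via L\'evy's theorems; once $M={}^\Lambda(M_+)$ and ${}^\mathcal{P}M=M_-$ are in hand, the rest of~(ii) is bookkeeping with the decomposition.
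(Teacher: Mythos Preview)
The paper does not give its own proof of this proposition; it is quoted as a survey result from \cite{EK81}, Proposition~2.27, with no argument supplied. So there is nothing in the present paper to compare your attempt against.

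On its own merits, your strategy is the right one and the bookkeeping in~(ii) is correct: once you have $M={}^\Lambda(M_+)$ and ${}^\mathcal{P}M=M_-$, the identities for $\Delta A$ and $\Delta B$ follow exactly as you write, by evaluating at $\Lambda$- resp.\ predictable stopping times and invoking the section theorem. The construction of $\hat Z=M_+-A-B_-$ and the verification $\hat Z_+=Z_+$, $Z={}^\Lambda\hat Z$ are also fine.

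The one place that is not yet a proof is the l\`adl\`ag claim for $M$ in~(i), and you flag this yourself. L\'evy's theorems give, for each fixed $t$, a null set on which $\lim_{s\downarrow t}M_s$ may fail to exist; this does not produce a single evanescent exceptional set. A clean way to close the gap is to reverse the order of your argument: first define the c\`adl\`ag $\mathcal{F}^\Lambda_+$-martingale $N_t:=\mathbb{E}[M_\infty\mid\mathcal{F}^\Lambda_{t+}]$ (via the rational-upcrossing argument, no reference to $M$ needed), then prove $M={}^\Lambda N$ exactly as you do, and finally invoke the regularity of $\Lambda$-projections of c\`adl\`ag processes established by Lenglart (\cite{EL80}, Th\'eor\`eme~9 and its corollaries), which gives that ${}^\Lambda N$ is l\`adl\`ag with $({}^\Lambda N)_+=N_+=N$ and $({}^\Lambda N)_-={}^\mathcal{P}(N_-)$ up to evanescence. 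This simultaneously yields the l\`adl\`ag paths, the identification $M_+=N$, and the predictable identity ${}^\mathcal{P}M=M_-$ that you need for the $\Delta A$ formula. With that one input from \cite{EL80} in place, your sketch becomes a complete proof.
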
		
	%%%%%%%%%%%%%%%%%%%%%%%%%%%%%%%%%%%%%%%%%%%%%%%%%%%%%%%%%%%%%%%%%%
	%%%%%%%%%%%%%%%%%%%%%%%%%%%%%%%%%%%%%%%%%%%%%%%%%%%%%%%%%%%%%%%%%%
		
	\subsubsection{Snell envelope}\label{optstop_ssec_2}		
		
		Next we introduce a classical process in
		the context of optimal stopping:
		
		\begin{Thm}[\citing{EK81}{Theorem 2.28}{126}]
			\label{optstop_thm_1}
			Let $(Z(T))_{T\in \stm}$ be a positive
				$\stm$-system.
				The maximal conditional gain
				\[
					\bar{Z}(S):=\esssup_{T\geq S,\, T\in \stm}
					\mathbb{E}\left[Z(T)| \mathcal{F}^\Lambda_S\right],\quad
					S\in \stm,
				\]
				is an $\stm$-supermartingale system,
				which is
				aggregated by a $\Lambda$-supermartingale $\bar{Z}$. This
				$\bar{Z}$ is the smallest among all positive 
				$\Lambda$-supermartingales $\tilde{Z}$ dominating $Z$ in the 
				sense that $\tilde{Z}_T\geq Z(T)$ for all $T\in 
				\stm$.
		\end{Thm}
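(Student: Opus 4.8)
The plan is to verify that the family $(\bar Z(S))_{S\in\stm}$ is an $\stm$-supermartingale system in the sense of Definition \ref{optstop_def_2}, so that the aggregating $\Lambda$-supermartingale $\bar Z$ is furnished immediately by Proposition \ref{optstop_pro_1}(i); it then remains only to check domination and minimality. Thus the real content is the supermartingale-system verification, the rest being structural bookkeeping.

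First I would establish the $\stm$-system properties from Definition \ref{optstop_def_1}. Measurability is immediate: each $\mathbb{E}[Z(T)\mid\mathcal{F}^\Lambda_S]$ is $\mathcal{F}^\Lambda_S$-measurable, and an essential supremum of an $\mathcal{F}^\Lambda_S$-measurable family is again $\mathcal{F}^\Lambda_S$-measurable. For the consistency condition on $\{S=T\}$, I would use that this set lies in $\mathcal{F}^\Lambda_S\cap\mathcal{F}^\Lambda_T$ and that the traces of the two $\sigma$-fields there agree; concretely, restricting each competing stopping time $R$ to $\{S=T\}$ (setting it to $\infty$ off this set, which keeps it in $\stm$ and uses $Z(\infty)\geq 0$) shows the two essential suprema coincide a.s. on $\{S=T\}$.

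Second, and this is the heart of the matter, I would prove the supermartingale inequality $\bar Z(S)\geq\mathbb{E}[\bar Z(T)\mid\mathcal{F}^\Lambda_S]$ for $S\leq T$ via the upward-directedness of the family $\{\mathbb{E}[Z(R)\mid\mathcal{F}^\Lambda_T]:R\geq T,\ R\in\stm\}$. Given two candidates $R_1,R_2\geq T$, I would paste them along the $\mathcal{F}^\Lambda_T$-set $A:=\{\mathbb{E}[Z(R_1)\mid\mathcal{F}^\Lambda_T]\geq\mathbb{E}[Z(R_2)\mid\mathcal{F}^\Lambda_T]\}$ to form $R_3:=R_1\mathbb{1}_A+R_2\mathbb{1}_{A^c}$, whose conditional gain dominates the maximum of the two. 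The crucial—and most delicate—point is that $R_3$ is again a genuine $\Lambda$-stopping time: this is exactly where the structural description of $\mathcal{F}^\Lambda_T$ and the stability $T_H\in\stm$ for $H\in\mathcal{F}^\Lambda_T$ recorded in Remark \ref{meyer_rem_1} must be invoked, since pasting of stopping times is not automatic for a general Meyer-$\sigma$-field. Upward-directedness then yields an increasing sequence $R_n\geq T$ with $\mathbb{E}[Z(R_n)\mid\mathcal{F}^\Lambda_T]\uparrow\bar Z(T)$, and by conditional monotone convergence together with the tower property
\[
\mathbb{E}[\bar Z(T)\mid\mathcal{F}^\Lambda_S]=\lim_{n}\mathbb{E}[Z(R_n)\mid\mathcal{F}^\Lambda_S]\leq\bar Z(S),
\]
the last inequality holding because each $R_n\geq T\geq S$. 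The integrability of $\bar Z(S)$ demanded by Definition \ref{optstop_def_2} is the one place requiring a finiteness hypothesis on the system (uniform integrability / class $(\mathrm{D}^\Lambda)$); absent that one works with $[0,\infty]$-valued supermartingale systems.

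Finally, domination and minimality are routine. Taking $R=T$, which is admissible since $Z(T)$ is $\mathcal{F}^\Lambda_T$-measurable, gives $\bar Z_T=\bar Z(T)\geq Z(T)$ for every $T\in\stm$. If $\tilde Z$ is any positive $\Lambda$-supermartingale with $\tilde Z_T\geq Z(T)$ for all $T$, then for $T\geq S$ the supermartingale property gives $\tilde Z_S\geq\mathbb{E}[\tilde Z_T\mid\mathcal{F}^\Lambda_S]\geq\mathbb{E}[Z(T)\mid\mathcal{F}^\Lambda_S]$; taking the essential supremum over $T\geq S$ yields $\tilde Z_S\geq\bar Z(S)=\bar Z_S$ at every $S\in\stm$, whence $\bar Z\leq\tilde Z$ up to evanescence by Corollary \ref{meyer_cor_1}. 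I expect the pasting-of-stopping-times step to be the main obstacle, as it is the one place where the argument genuinely departs from the classical optional case.
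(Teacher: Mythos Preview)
The paper does not prove this theorem; it is quoted from \cite{EK81} as part of the survey and no argument is supplied. Your proposal is essentially the standard proof from El Karoui's original work: establish upward-directedness of the family $\{\mathbb{E}[Z(R)\mid\mathcal{F}^\Lambda_T]:R\geq T\}$ by pasting along $\mathcal{F}^\Lambda_T$-sets, deduce the supermartingale inequality via monotone convergence and the tower property, then invoke Proposition~\ref{optstop_pro_1}(i) for aggregation and finish with domination and minimality. This is correct, and you have identified the only genuinely $\Lambda$-specific point, namely that $R_3:=R_1\mathbb{1}_A+R_2\mathbb{1}_{A^c}$ lies in $\stm$; this follows cleanly from Remark~\ref{meyer_rem_1} since $A\in\mathcal{F}^\Lambda_T\subset\mathcal{F}^\Lambda_{R_i}$ gives $(R_1)_A,(R_2)_{A^c}\in\stm$ and then $R_3=(R_1)_A\wedge(R_2)_{A^c}$.

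One caveat worth making explicit: as you note, Definition~\ref{optstop_def_2} requires integrability of each $\bar Z(S)$, which is not guaranteed by mere positivity of the system. El Karoui's formulation allows $[0,\infty]$-valued supermartingale systems here (the class($\mathrm{D}^\Lambda$) hypothesis enters only later, in Proposition~\ref{optstop_pro_3}), so your parenthetical remark is the right resolution and should perhaps be stated more prominently rather than as an aside.
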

		
		\begin{Def}[\citing{EK81}{Remark}{127}]\label{optstop_def_3}
				The process $\bar{Z}$ is called the 
				\emph{$\Lambda$-Snell envelope or just 
				Snell envelope of the $\stm$-system 
				$(Z(T))_{T\in \stm}$}.
		\end{Def}
		
		As it is important to know in which situations $\bar{Z}$ is of
		class($D^\Lambda$) we need the following result.
		
		\begin{Pro}[\citing{EK81}{Proposition 2.29}{127}]
			\label{optstop_pro_3}
			If the given $\stm$-system
			$(Z(T))_{T\in \stm}$
			of Theorem \ref{optstop_thm_1} 
			is of class($\text{D}^\Lambda$),
			then also its Snell envelope $\bar{Z}$ is of
			class($\text{D}^{\Lambda}$).
			In that case, $\bar{Z}$ has the decomposition
			$\bar{Z}=\bar{M}-\bar{A}-\bar{B}_-$, with processes
			$\bar{M},\bar{A},\bar{B}$ defined
			as in Proposition
			\ref{optstop_pro_1} (ii).
		\end{Pro}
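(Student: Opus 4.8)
The plan is to reduce everything to Proposition~\ref{optstop_pro_1}~(ii). By Theorem~\ref{optstop_thm_1} the maximal conditional gains $(\bar Z(S))_{S\in\stm}$ already form an $\stm$-supermartingale system, aggregated by the $\Lambda$-supermartingale $\bar Z$, with $\bar Z\geq Z\geq 0$. Hence the only thing that genuinely needs proof is that this system is of class($\text{D}^\Lambda$), i.e. that $\{\bar Z(S):S\in\stm\}$ is uniformly integrable; granting this, Proposition~\ref{optstop_pro_1}~(ii) applied to the supermartingale system $(\bar Z(S))_{S\in\stm}$ delivers at once that $\bar Z$ is of class($\text{D}^\Lambda$) together with the unique decomposition $\bar Z=\bar M-\bar A-\bar B_-$ and all the asserted properties of $\bar M,\bar A,\bar B$. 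So I would spend the entire proof on the uniform integrability of the Snell envelope system.

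For this main step I would use the de la Vall\'ee--Poussin criterion. Since $\{Z(T):T\in\stm\}$ is uniformly integrable and positive, there is a convex, non-decreasing $\phi\colon[0,\infty)\to[0,\infty)$ with $\phi(x)/x\to\infty$ as $x\to\infty$ and $C:=\sup_{T\in\stm}\mathbb{E}[\phi(Z(T))]<\infty$. Fix $S\in\stm$. The family $\{\mathbb{E}[Z(T)|\mathcal{F}^\Lambda_S]:T\in\stm,\ T\geq S\}$ is upward directed: for $T_1,T_2\geq S$ and $A:=\{\mathbb{E}[Z(T_1)|\mathcal{F}^\Lambda_S]\geq\mathbb{E}[Z(T_2)|\mathcal{F}^\Lambda_S]\}\in\mathcal{F}^\Lambda_S$ the pasted time $T_3:=T_1\mathbb{1}_A+T_2\mathbb{1}_{A^c}$ again lies in $\stm$, is $\geq S$, and satisfies $\mathbb{E}[Z(T_3)|\mathcal{F}^\Lambda_S]=\max\{\mathbb{E}[Z(T_1)|\mathcal{F}^\Lambda_S],\mathbb{E}[Z(T_2)|\mathcal{F}^\Lambda_S]\}$. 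Therefore the essential supremum defining $\bar Z(S)$ is realised as the a.s. non-decreasing limit of $Y_n:=\mathbb{E}[Z(T_n)|\mathcal{F}^\Lambda_S]$ for suitable $T_n\geq S$. By Jensen's inequality (convexity of $\phi$) one has $\phi(Y_n)\leq\mathbb{E}[\phi(Z(T_n))|\mathcal{F}^\Lambda_S]$, hence $\mathbb{E}[\phi(Y_n)]\leq\mathbb{E}[\phi(Z(T_n))]\leq C$; and since $\phi$ is continuous and non-decreasing, $\phi(Y_n)\uparrow\phi(\bar Z(S))$, so monotone convergence yields $\mathbb{E}[\phi(\bar Z(S))]=\lim_n\mathbb{E}[\phi(Y_n)]\leq C$. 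As $S\in\stm$ was arbitrary, $\sup_{S\in\stm}\mathbb{E}[\phi(\bar Z(S))]\leq C$, and de la Vall\'ee--Poussin gives the uniform integrability of $\{\bar Z(S):S\in\stm\}$.

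The hard part will be exactly this interchange of $\phi$, the conditioning, and the essential supremum. It works only because the conditional gains are upward directed, and that directedness in turn rests on the stability of $\stm$ under pasting two $\Lambda$-stopping times along an $\mathcal{F}^\Lambda_S$-measurable set---the same pasting property already implicit in the construction of $\bar Z$ in Theorem~\ref{optstop_thm_1}. It is this property that lets me replace the a priori uncountable essential supremum by a single increasing sequence and thereby push the uniform bound $\mathbb{E}[\phi(\cdot)]\leq C$ through the monotone-convergence argument. Once the uniform integrability is secured there is nothing left to compute: Proposition~\ref{optstop_pro_1}~(ii) furnishes the decomposition $\bar Z=\bar M-\bar A-\bar B_-$ and completes the proof.
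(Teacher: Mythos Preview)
Your proof is correct. The paper itself does not give a proof of this proposition---it is a survey result cited from \cite{EK81}---so there is no ``paper's own proof'' to compare against directly.

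That said, your route via de~la~Vall\'ee--Poussin is slightly more elaborate than necessary. A more direct argument, closer in spirit to how such results are usually handled, runs as follows. From the upward directedness you already established, $\mathbb{E}[\bar Z(S)]=\sup_{T\geq S}\mathbb{E}[Z(T)]\leq \sup_{T\in\stm}\mathbb{E}[Z(T)]=:K<\infty$ (finiteness by uniform integrability of $\{Z(T)\}$). Hence $\mathbb{P}(\bar Z(S)>c)\leq K/c$ uniformly in $S$. Since $\{\bar Z(S)>c\}\in\mathcal{F}^\Lambda_S$, the same directedness and monotone convergence give
\[
\mathbb{E}\bigl[\bar Z(S)\mathbb{1}_{\{\bar Z(S)>c\}}\bigr]=\lim_n\mathbb{E}\bigl[Z(T_n)\mathbb{1}_{\{\bar Z(S)>c\}}\bigr],
\]
and the right-hand side is small uniformly in $S$ and $n$ once $c$ is large, by the uniform integrability of $\{Z(T)\}$. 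This avoids invoking a Young function $\phi$ and Jensen's inequality. Both arguments rest on exactly the same structural ingredient you correctly isolated: the pasting stability of $\stm$ (via Remark~\ref{meyer_rem_1}) that makes the family of conditional gains upward directed.
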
			
		
	%%%%%%%%%%%%%%%%%%%%%%%%%%%%%%%%%%%%%%%%%%%%%%%%%%%%%%%%%%%%%%%%%%
	%%%%%%%%%%%%%%%%%%%%%%%%%%%%%%%%%%%%%%%%%%%%%%%%%%%%%%%%%%%%%%%%%%
		
	\subsubsection{Optimality criterion and an approximation
			of the Snell envelope}\label{optstop_ssec_3}			
		
		For an arbitrary positive 
		$\stm$-system 
			$(Z(S))_{S\in \stm}$ one can formulate
		the following optimal stopping problem
		\begin{align}\label{optstop_eq_1}
			\text{Maximize }\quad \mathbb{E}[Z(S)] \quad \text{ over all }
			S\in \stm.
		\end{align}
		The following theorem uses the Snell envelope to give necessary and sufficient
		conditions for  a stopping time to be optimal, i.e. to attain the maximal value in \eqref{optstop_eq_1}.
	
		\begin{Thm}[\citing{EK81}{Theorem 2.31}{129}]
			\label{optstop_thm_2}			
				Let $(Z(S))_{S\in \stm}$ be a positive
				$\stm$-system of
				class($\text{D}^{\Lambda}$)
				and let $\bar{Z}$ denote its $\Lambda$-Snell envelope.
				Then $\bar{U}\in \stm$ is 
				optimal for \eqref{optstop_eq_1} if and only if
				\begin{enumerate}[(i)]
					\item $Z(\bar{U})=\bar{Z}_{\bar{U}}$ $\mathbb{P}$-a.s.,
					\item $(\bar{Z}_{t\wedge \bar{U}})_{t\in [0,\infty]}$
					is a $\Lambda$-martingale.
				\end{enumerate}
		\end{Thm}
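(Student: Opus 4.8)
The plan is to compare the performance $\mathbb{E}[Z(\bar U)]$ of a candidate $\bar U$ with the value $v:=\sup_{S\in\stm}\mathbb{E}[Z(S)]$ of \eqref{optstop_eq_1} through the Snell envelope. The backbone is the chain of inequalities, valid for \emph{every} $\bar U\in\stm$,
\[
  \mathbb{E}[Z(\bar U)]\;\le\;\mathbb{E}[\bar Z_{\bar U}]\;\le\;\mathbb{E}[\bar Z_0].
\]
The first inequality is the domination $\bar Z_{\bar U}\ge Z(\bar U)$ from Theorem \ref{optstop_thm_1} (integrability being guaranteed by class($D^\Lambda$)), and the second is the supermartingale-system inequality of Definition \ref{optstop_def_2} applied to $S=0\le\bar U=T$, i.e. $\bar Z_0\ge\mathbb{E}[\bar Z_{\bar U}\mid\mathcal{F}^\Lambda_0]$, followed by taking expectations. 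In particular $v\le\mathbb{E}[\bar Z_0]$.

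For \emph{sufficiency}, assume (i) and (ii). Since $(\bar Z_{t\wedge\bar U})_{t}$ is a $\Lambda$-martingale it has constant expectation, so evaluating at $t=0$ and $t=\infty$ gives $\mathbb{E}[\bar Z_{\bar U}]=\mathbb{E}[\bar Z_{0}]$; condition (i) gives $\mathbb{E}[Z(\bar U)]=\mathbb{E}[\bar Z_{\bar U}]$. Hence $\mathbb{E}[Z(\bar U)]=\mathbb{E}[\bar Z_0]\ge v\ge\mathbb{E}[Z(\bar U)]$, forcing equality throughout and thus optimality of $\bar U$.

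For \emph{necessity}, suppose $\bar U$ is optimal, i.e. $\mathbb{E}[Z(\bar U)]=v$. The key auxiliary identity I would establish is $v=\mathbb{E}[\bar Z_0]$; together with the chain above this turns both of its inequalities into equalities. From $\mathbb{E}[Z(\bar U)]=\mathbb{E}[\bar Z_{\bar U}]$ together with $Z(\bar U)\le\bar Z_{\bar U}$ a.s. one reads off (i). For (ii), I would first note that the stopped system $(\bar Z_{S\wedge\bar U})_{S\in\stm}$ is again an $\stm$-supermartingale system (using $S\wedge\bar U\in\stm$ and the inequality of Definition \ref{optstop_def_2}); the equality $\mathbb{E}[\bar Z_{\bar U}]=\mathbb{E}[\bar Z_0]$ then sandwiches $\mathbb{E}[\bar Z_0]\ge\mathbb{E}[\bar Z_{S\wedge\bar U}]\ge\mathbb{E}[\bar Z_{\bar U}]=\mathbb{E}[\bar Z_0]$, so that $\mathbb{E}[\bar Z_{S\wedge\bar U}]=\mathbb{E}[\bar Z_0]$ for every $S$. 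Consequently, for $S\le T$ in $\stm$, the nonnegative random variable $\bar Z_{S\wedge\bar U}-\mathbb{E}[\bar Z_{T\wedge\bar U}\mid\mathcal{F}^\Lambda_S]$ has vanishing expectation and is therefore a.s. zero, which is exactly the $\Lambda$-martingale property (ii).

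The main obstacle is the value identity $v=\mathbb{E}[\bar Z_0]$, precisely its nontrivial inequality $v\ge\mathbb{E}[\bar Z_0]$. Since $\bar Z_0=\esssup_{T\in\stm}\mathbb{E}[Z(T)\mid\mathcal{F}^\Lambda_0]$, this requires that the family $\{\mathbb{E}[Z(T)\mid\mathcal{F}^\Lambda_0]:T\in\stm\}$ be upward directed -- obtained by pasting two stopping times on the event where one conditional gain exceeds the other -- so that the essential supremum is the increasing limit of a sequence $\mathbb{E}[Z(T_n)\mid\mathcal{F}^\Lambda_0]$; class($D^\Lambda$) then upgrades monotone convergence to convergence of the expectations $\mathbb{E}[Z(T_n)]\to\mathbb{E}[\bar Z_0]$, yielding $v\ge\mathbb{E}[\bar Z_0]$. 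This directedness is implicit in the construction of the Snell envelope in Theorem \ref{optstop_thm_1}, and I would either invoke it from there or re-derive it; the remaining care is the routine verification that stopping preserves the $\stm$-supermartingale property in this general setting.
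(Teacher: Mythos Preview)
Your argument is correct and is the standard route for this classical optimality criterion; note, however, that the present paper does not supply its own proof of Theorem~\ref{optstop_thm_2} but merely cites it from \cite{EK81}, so there is no in-paper proof to compare against. Your identification of the value identity $v=\mathbb{E}[\bar Z_0]$ as the crux is exactly right, and the upward-directedness argument you sketch is precisely what underlies the construction of the Snell envelope in Theorem~\ref{optstop_thm_1} (positivity alone already justifies the monotone-convergence step; class($D^\Lambda$) is not needed there, though it is harmless).

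One small point worth making explicit in your ``routine verification'' that the stopped system $(\bar Z_{S\wedge\bar U})_{S\in\stm}$ is an $\stm$-supermartingale system: the supermartingale inequality for $\bar Z$ at the pair $S\wedge\bar U\le T\wedge\bar U$ gives you conditioning on $\mathcal{F}^\Lambda_{S\wedge\bar U}$, not on $\mathcal{F}^\Lambda_S$. To pass to $\mathcal{F}^\Lambda_S$, split on $\{S\ge\bar U\}\in\mathcal{F}^\Lambda_S$ (where the stopped values coincide) and on $\{S<\bar U\}$ apply the $\bar Z$-supermartingale inequality to the pair $S\le (T\wedge\bar U)\vee S\in\stm$. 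Alternatively, and perhaps more in the spirit of the paper's later use of the decomposition in Proposition~\ref{optstop_pro_3}, one can argue directly from $\bar Z=\bar M-\bar A-\bar B_-$: the equality $\mathbb{E}[\bar Z_{\bar U}]=\mathbb{E}[\bar Z_0]$ forces $\bar A_{\bar U}=\bar B_{\bar U-}=0$ a.s., so $\bar Z_{\cdot\wedge\bar U}=\bar M_{\cdot\wedge\bar U}$ is a $\Lambda$-martingale.
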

		
		The next proposition introduces  the
		entry time of the event that the
		Snell envelope is close to the
		$\stm$-system
		and gives more precise
		information about the processes $\bar{A}$ and $\bar{B}$
		of the decomposition of the Snell envelope
		of a $\Lambda$-measurable process introduced in Proposition
		\ref{optstop_pro_3}.						
						
		\begin{Pro}[Compare \citing{EK81}{Proposition 2.32}{130} and \citing{EK81}{Proposition 2.34}{131}]
				\label{optstop_pro_4}
				Let $(Z(T))_{T\in \stm}$ be a positive
				$\stm$-system of
				class($\text{D}^{\Lambda}$), aggregated by 
				a $\Lambda$-measurable process $Z$ 
				and denote by $\bar{Z}$ its $\Lambda$-Snell envelope with
				$\bar{M}-\bar{A}-\bar{B}_-$ the decomposition of $\bar{Z}$
				from Proposition \ref{optstop_pro_3}.
				Furthermore consider for $\lambda\in (0,1)$ the
				$\Lambda$-measurable set				
					\[
						E^\lambda:=\left\{(\omega,t)\in\Omega\times 
						[0,\infty)\ \middle\vert\  \lambda 
						\bar{Z}_t(\omega)\leq Z_t(\omega)\right\}.
					\]
				and let
				\[
					T_{S}^\lambda(\omega):=\inf\left\{t\geq S(\omega)\,
					\middle| \, (\omega,t)\in E^\lambda\right\}
				\]					
					denote the entry time of $E^\lambda$ after some given $S \in \stm$.					
					Then we have 
					\[
						\mathbb{E}\left[\bar{Z}_S\right]=\mathbb{E}
						\left[\bar{Z}_{T_S^\lambda} \mathbb{1}_{\{T_S^\lambda
						\in E^\lambda\}}
					+ \bar{Z}_{T_S^\lambda+} \mathbb{1}_{\{T_S^\lambda 
					\notin E^\lambda\}}\right]
					\]
					and
				\begin{align*}
					\bar{A}_S=\bar{A}_{T_S^\lambda},\quad
					\bar{B}_{S-}=\bar{B}_{T_S^\lambda-}
						\mathbb{1}_{\{T_S^\lambda
						\in E^\lambda\}}
					+\bar{B}_{T_S^\lambda}\mathbb{1}_{\{T_S^\lambda
						\notin E^\lambda\}}\quad \text{ a.s.}
				\end{align*}
				In particular, we have up to evanescent sets that
				$\{\bar{A}>	\bar{A}_-\}
				\subset \{\bar{Z}_-=\lsl{Z}\}$, 
				$\{\bar{B}>\bar{B}_-\}\subset \{\bar{Z}=Z\}$ and
				\begin{align*}
					\bar{Z}={^\Lambda (\bar{Z}_+)\vee Z} \quad \text{ and }
					\quad \bar{Z}_-=(^\mathcal{P} \bar{Z})
					\vee \lsl{Z}.
				\end{align*}
		\end{Pro}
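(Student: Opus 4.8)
The plan is to make the contact structure of the Snell envelope the engine of the proof: I would first show that the two increasing processes in the decomposition $\bar{Z} = \bar{M} - \bar{A} - \bar{B}_-$ of Proposition \ref{optstop_pro_3} can only grow where $\bar{Z}$ is in contact with the reward, then read off the two supremum formulas, and only afterwards deduce the $T_S^\lambda$-identities. Although the proposition states the $T_S^\lambda$-identities first and the contact inclusions as ``in particular'' consequences, it is cleaner to reverse this order, since the contact inclusions are $\lambda$-free while the identities carry the parameter $\lambda$. Throughout I rely on the jump formulas $\Delta\bar{A} = \bar{Z}_- - {}^{\mathcal{P}}\bar{Z}$ and $\Delta\bar{B} = \bar{Z} - {}^{\Lambda}(\bar{Z}_+)$ and on the inequalities $\bar{Z}_- \geq {}^{\mathcal{P}}\bar{Z}$, $\bar{Z} \geq {}^{\Lambda}(\bar{Z}_+)$ from Proposition \ref{optstop_pro_2}, together with the minimality of $\bar{Z}$ among positive dominating $\Lambda$-supermartingales from Theorem \ref{optstop_thm_1}.

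The core step is the inclusion $\{\bar{B} > \bar{B}_-\}\subset\{\bar{Z} = Z\}$, which I would prove by a local lowering argument. Suppose on a non-evanescent $\Lambda$-measurable set one had simultaneously $\bar{Z} > {}^{\Lambda}(\bar{Z}_+)$, so that $\bar{B}$ jumps, and $\bar{Z} > Z$. By the section theorem (Theorem \ref{meyer_thm_2}) this set carries the graph $\stsetG{R}$ of some $\Lambda$-stopping time $R$ with $\mathbb{P}(R<\infty)>0$, and replacing $\bar{Z}_R$ by the strictly smaller value $\bigl({}^{\Lambda}(\bar{Z}_+)\vee Z\bigr)_R$ leaves all right limits untouched; the modified process is then still a positive $\Lambda$-supermartingale dominating $Z$ but is strictly below $\bar{Z}$ on $\stsetG{R}$, contradicting the minimality in Theorem \ref{optstop_thm_1}. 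The predictable counterpart $\{\bar{A} > \bar{A}_-\}\subset\{\bar{Z}_- = \lsl{Z}\}$ follows from the same idea applied to the left limits along a predictable $\Lambda$-stopping time, now lowering ${}^{\mathcal{P}}\bar{Z}$ rather than $\bar{Z}$ itself. I expect this predictable modification to be the main obstacle, since one must argue with left limits and the $\mathcal{F}^\Lambda$-predictable projection in place of the values of $\bar{Z}$, and transfer the resulting pointwise comparison back to an evanescence statement through Corollary \ref{meyer_cor_1}.

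Granting the two inclusions, the supremum formulas are immediate. From Proposition \ref{optstop_pro_2} we have $\bar{Z} \geq {}^{\Lambda}(\bar{Z}_+)$ and from domination $\bar{Z} \geq Z$, hence $\bar{Z} \geq {}^{\Lambda}(\bar{Z}_+)\vee Z$; conversely, where $\bar{B}$ is continuous one has $\bar{Z} = {}^{\Lambda}(\bar{Z}_+)$, while at a jump of $\bar{B}$ the first inclusion forces $\bar{Z} = Z$, so $\bar{Z} = {}^{\Lambda}(\bar{Z}_+)\vee Z$ up to evanescence. Symmetrically, $\bar{Z}_- \geq {}^{\mathcal{P}}\bar{Z}$ from Proposition \ref{optstop_pro_2} and $\bar{Z}_- \geq \lsl{Z}$ (obtained by passing to left limits in $\bar{Z}\geq Z$, using that $\bar{Z}$ is l\`adl\`ag) give $\bar{Z}_- \geq ({}^{\mathcal{P}}\bar{Z})\vee\lsl{Z}$, and the $\bar{A}$-inclusion upgrades this to the equality $\bar{Z}_- = ({}^{\mathcal{P}}\bar{Z})\vee\lsl{Z}$.

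It remains to deduce the $T_S^\lambda$-identities. The d\'ebut $T_S^\lambda$ of the $\Lambda$-measurable set $E^\lambda$ after $S$, together with the dichotomy $\{T_S^\lambda\in E^\lambda\}$ versus $\{T_S^\lambda\notin E^\lambda\}$, defines a divided stopping time, and both events are $\mathcal{F}^\Lambda_{T_S^\lambda}$-measurable since $\mathbb{1}_{E^\lambda}$ is $\Lambda$-measurable (Definition \ref{meyer_def_3}). By definition of $E^\lambda$, on $\stsetRO{S}{T_S^\lambda}$ one has $Z < \lambda\bar{Z}$ and $\bar{Z}>0$; passing to left limits gives $\lsl{Z} \leq \lambda\bar{Z}_- < \bar{Z}_-$ throughout $\stsetRO{S}{T_S^\lambda}$ and at its right endpoint $T_S^\lambda$, while on $\{T_S^\lambda\notin E^\lambda\}$ one moreover has $Z_{T_S^\lambda} < \bar{Z}_{T_S^\lambda}$. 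By the two inclusions, $\bar{A}$ and $\bar{B}$ therefore cannot charge $\stsetRO{S}{T_S^\lambda}$, $\bar{A}$ cannot jump at $T_S^\lambda$, and $\bar{B}$ cannot jump at $T_S^\lambda$ on $\{T_S^\lambda\notin E^\lambda\}$; this is exactly the content of $\bar{A}_S = \bar{A}_{T_S^\lambda}$ and of the divided expression for $\bar{B}_{S-}$, the two cases distinguishing whether $\bar{B}$ is permitted its jump at the contact time. Since $\bar{A}$ and $\bar{B}$ are flat up to the divided time $T_S^\lambda$, the supermartingale $\bar{Z}$ coincides there with the $\Lambda$-martingale $\bar{M}$, and optional sampling of $\bar{M}$, together with $\mathbb{E}[\bar{Z}_{T_S^\lambda+}] = \mathbb{E}[{}^{\Lambda}(\bar{Z}_+)_{T_S^\lambda}]$ from the projection theorem to handle the right-limit term, yields the asserted expectation formula.
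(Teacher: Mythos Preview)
Your reversal of the order—inclusions first, then the $T_S^\lambda$-identities—has a genuine gap. The inclusion $\{\bar A>\bar A_-\}\subset\{\bar Z_-=\lsl{Z}\}$ only locates the \emph{jumps} of $\bar A$; it says nothing about where the continuous part of $\bar A$ puts its mass. (For $\bar B$ you are safe, since $\bar B$ is purely discontinuous by Proposition~\ref{optstop_pro_1}(ii).) So from the two inclusions you can conclude that $\bar A$ does not \emph{jump} on $\stsetLO{S}{T_S^\lambda}$, but not that $\bar A_S=\bar A_{T_S^\lambda}$: the continuous part of $\bar A$ could still grow on the open set where $Z<\lambda\bar Z$. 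Your sentence ``By the two inclusions, $\bar A$ and $\bar B$ therefore cannot charge $\stsetRO{S}{T_S^\lambda}$'' is precisely where the argument breaks, and since you then deduce the expectation identity from this flatness, the whole final paragraph collapses.

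The paper itself does not prove this proposition (it is surveyed from \cite{EK81}, Propositions~2.32 and~2.34), and El~Karoui's original argument runs in the \emph{opposite} direction for exactly this reason. She first proves the $\lambda$-inequality
\[
\lambda\,\mathbb{E}[\bar Z_S]\ \le\ \mathbb{E}\bigl[\bar Z_{T_S^\lambda}\mathbb{1}_{\{T_S^\lambda\in E^\lambda\}}+\bar Z_{T_S^\lambda+}\mathbb{1}_{\{T_S^\lambda\notin E^\lambda\}}\bigr]
\]
directly from the definition of the Snell envelope as an essential supremum (any near-optimal $T\ge S$ must satisfy $\lambda\bar Z_T\le Z_T$ and so $T\ge T_S^\lambda$), combines it with the $\Lambda$-supermartingale inequality in the other direction (extended to the divided time via Lemma~\ref{optstop_pro_6}), and obtains the expectation identity. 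The equalities $\bar A_S=\bar A_{T_S^\lambda}$ and the $\bar B$-formula then drop out because the nonnegative increments of $\bar A$ and $\bar B$ must have zero expectation, and this controls the continuous part of $\bar A$ as well. The jump inclusions and the supremum formulas are genuinely ``in particular'' consequences, not the engine. Your local-lowering argument for the inclusions is a nice heuristic, but even if made rigorous it does not replace the $\lambda$-approximation step; note also that $T_S^\lambda$ is in general only an $\mathcal{F}^\Lambda_+$-stopping time (see the Remark after Proposition~\ref{optstop_pro_4}), so your appeals to the projection theorem and to optional sampling at $T_S^\lambda$ need to go through the divided-stopping-time machinery rather than the $\Lambda$-projection directly.
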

		
		\begin{Rem}
			The stopping time $T_S^\lambda$ from the previous 
			proposition may not be
			a $\Lambda$-stopping time, very much like a level passage time for a predictable process may not itself be predictable.
		\end{Rem}
			
	%%%%%%%%%%%%%%%%%%%%%%%%%%%%%%%%%%%%%%%%%%%%%%%%%%%%%%%%%%%%%%%%%%%%%%
	%%%%%%%%%%%%%%%%%%%%%%%%%%%%%%%%%%%%%%%%%%%%%%%%%%%%%%%%%%%%%%%%%%%%%%

	\subsubsection{Relaxed optimal stopping}\label{optstop_ssec_4}
		
		In this subsection we state results on some stopping time
		which ``nearly''
		 solves the optimal stopping 
		problem introduced in \eqref{optstop_eq_1}. This result
		will give rise to the notion of divided stopping times
		to be discussed thereafter.
		
		For now we assume that the $\stm$-system 
		$(Z(T))_{T\in \stm}$ can be aggregated by some
		 $\Lambda$-measurable
		process $Z$. 		
		
		\begin{Pro}[\citing{EK81}{Proposition 2.35 and 2.36}{133 and 135} and \citing{EK81}{Remark}{136}]\label{optstop_pro_5}
			We use the notations and hypotheses from Proposition
			\ref{optstop_pro_3} and \ref{optstop_pro_4}. For any
			$S\in \stm$, 
			the family
			 of $\mathcal{F}^\Lambda_+$-stopping times 
			$(T_S^\lambda)_{\lambda\in [0,1)}$ is non-decreasing in $\lambda\in [0,1)$
			and we denote its limit by
			  $T_S:=\lim_{\lambda
			 \uparrow 1} T_{S}^\lambda$. We have 
			\begin{align*}
				H_S^-&:=\left\{T_S^\lambda< T_S
				\text{ for every $\lambda\in [0,1)$}\right\}
				\subset \left\{ \bar{Z}_{T_S-} = \lsl{Z}_{T_S}\right\},\\
				H_S&:=(H_S^-)^c\cap\left\{\bar{Z}_{T_S} \leq Z_{T_S}
				\right\}\subset \left\{ \bar{Z}_{T_S}= Z_{T_S}\right\},\\
				H_S^+&:=(H_S^-)^c\cap\left\{\bar{Z}_{T_S} > Z_{T_S} \right\}
				\subset \left\{ \bar{Z}_{T_S+}= \lsr{Z}{T_S}\right\}
			\end{align*}
			and
			\begin{align}\label{Main:140}
				T_S=\inf\left\{t\geq S \ \middle\vert\  Z_t=\bar{Z}_t
											\text{ or } \lsl{Z}_t=\bar{Z}_{t-}
											\text{ or } \lsr{Z}{t}=\bar{Z}_{t+}  \right\}.
			\end{align}
			The sets $H_S$ and $H_S^+$ are contained in $\mathcal{F}_{T_S}^\Lambda$ and
			$H_S^-\in \mathcal{F}_{T_S-}^\Lambda$. Moreover $(T_S)_{H_S^-}$ is a predictable 
			$\mathcal{F}^\Lambda_+$-stopping time, $(T_S)_{H_S}$ 
			is a $\Lambda$-stopping time, $(T_S)_{H_S^+}$ 
			is an $\mathcal{F}^\Lambda_+$-stopping time and for 
			each $S\in \stm$ we get that
			\begin{align}\label{Main:230}
				\bar{Z}_S=\mathbb{E}\left[\lsl{Z}_{T_S}\mathbb{1}_{H_S^-}+Z_{T_S}
				\mathbb{1}_{H_S}
								+\lsr{Z}{T_S}\mathbb{1}_{H_S^+}\ 
								\middle\vert\  \mathcal{F}^\Lambda_S\right]
			\end{align}
			 and, in particular,
			\begin{align}\label{optstop_eq_2}
				\mathbb{E}[\bar{Z}_S]
				= \mathbb{E}\left[\lsl{Z}_{T_S}\mathbb{1}_{H_S^-}+Z_{T_S}
				\mathbb{1}_{H_S}
								+\lsr{Z}{T_S}\mathbb{1}_{H_S^+}\right].
			\end{align}
		\end{Pro}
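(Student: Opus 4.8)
The plan is to obtain every claim by letting $\lambda\uparrow 1$ in the $\lambda$-dependent identities already recorded in Proposition~\ref{optstop_pro_4}. I would start with the monotonicity assertion. Since $\bar{Z}$ is the Snell envelope of a \emph{positive} $\stm$-system, $\bar{Z}\geq 0$ and $\bar{Z}\geq Z$ up to evanescence by Theorem~\ref{optstop_thm_1}; hence $E^{\lambda'}\subset E^{\lambda}$ for $\lambda\leq\lambda'$, the entry times $T_S^\lambda$ are non-decreasing in $\lambda$, and the limit $T_S=\lim_{\lambda\uparrow 1}T_S^\lambda$ exists. For the characterization \eqref{Main:140} I would check both inclusions. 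If $s<T_S$ and $\bar{Z}_s=Z_s$, then $\lambda\bar{Z}_s\leq\bar{Z}_s=Z_s$ for every $\lambda<1$ (also when $\bar{Z}_s=0$, which forces $Z_s=0$), so $s\in E^\lambda$ for all $\lambda$ and thus $T_S\leq s$, a contradiction; the analogous one-sided arguments, using the l\`adl\`ag regularity of $\bar{Z}$ (Proposition~\ref{optstop_pro_2}) and the envelopes $\lsl{Z}$, $\lsr{Z}{}$ (Remark~\ref{optstop_rem_3}), rule out $\lsl{Z}_s=\bar{Z}_{s-}$ and $\lsr{Z}{s}=\bar{Z}_{s+}$ for $s<T_S$. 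This shows that the entry time of the union in \eqref{Main:140} dominates $T_S$; the reverse inequality will follow from the inclusions established next, which show that one of the three conditions holds at $T_S$ itself.

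Second, I would prove the three inclusions. On $H_S^-$ the approximation is strict, $T_S^\lambda\uparrow T_S$, so the l\`adl\`ag property forces both $\bar{Z}_{T_S^\lambda}$ and $\bar{Z}_{T_S^\lambda+}$ to converge to the left limit $\bar{Z}_{T_S-}$; inserting this into $\lambda\bar{Z}_{T_S^\lambda}\leq Z_{T_S^\lambda}$ (valid on $E^\lambda$) along a subsequence realizing $\lsl{Z}_{T_S}=\limsup_{s\uparrow T_S}Z_s$ gives $\bar{Z}_{T_S-}\leq\lsl{Z}_{T_S}$, while $\bar{Z}\geq Z$ gives the converse, so $H_S^-\subset\{\bar{Z}_{T_S-}=\lsl{Z}_{T_S}\}$. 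On $(H_S^-)^c$ one has $T_S^\lambda=T_S$ for $\lambda$ close to $1$, and I would split according to whether $\bar{Z}_{T_S}\leq Z_{T_S}$, in which case $\bar{Z}\geq Z$ forces $\bar{Z}_{T_S}=Z_{T_S}$ and hence $H_S\subset\{\bar{Z}_{T_S}=Z_{T_S}\}$, or $\bar{Z}_{T_S}>Z_{T_S}$, in which case the entry into $E^\lambda$ is not attained at $T_S$ but only approached from the right, so $\bar{Z}_{T_S+}=\lsr{Z}{T_S}$ and $H_S^+\subset\{\bar{Z}_{T_S+}=\lsr{Z}{T_S}\}$. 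The trichotomy $H_S^-,H_S,H_S^+$ exhausts $\{S<\infty\}$ up to a $\WM$-null set.

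The heart of the proof is the limit passage in
\[
    \mathbb{E}[\bar{Z}_S]=\mathbb{E}\left[\bar{Z}_{T_S^\lambda}\mathbb{1}_{\{T_S^\lambda\in E^\lambda\}}+\bar{Z}_{T_S^\lambda+}\mathbb{1}_{\{T_S^\lambda\notin E^\lambda\}}\right]
\]
and in its conditional version obtained by replacing $S$ with $S_H$, $H\in\mathcal{F}^\Lambda_S$. By the inclusions just proved the integrand converges pointwise to $\lsl{Z}_{T_S}\mathbb{1}_{H_S^-}+Z_{T_S}\mathbb{1}_{H_S}+\lsr{Z}{T_S}\mathbb{1}_{H_S^+}$, and since $\bar{Z}$ is of class($\mathrm{D}^\Lambda$) (Proposition~\ref{optstop_pro_3}) the family $(\bar{Z}_{T_S^\lambda},\bar{Z}_{T_S^\lambda+})_\lambda$ is uniformly integrable, so Vitali's theorem lets me exchange limit and expectation and deliver \eqref{optstop_eq_2} and its conditional form \eqref{Main:230}. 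The stopping-time types are then read off: the strictly increasing sequence announcing $T_S$ on $H_S^-$ makes $(T_S)_{H_S^-}$ a predictable $\mathcal{F}^\Lambda_+$-stopping time with $H_S^-\in\mathcal{F}^\Lambda_{T_S-}$; on $H_S$ the equality $\bar{Z}_{T_S}=Z_{T_S}$ of two $\Lambda$-measurable processes together with the section characterization of Theorem~\ref{meyer_acc_Thm_2} identifies $(T_S)_{H_S}$ as a $\Lambda$-stopping time; and $H_S,H_S^+\in\mathcal{F}^\Lambda_{T_S}$ with $(T_S)_{H_S^+}$ an $\mathcal{F}^\Lambda_+$-stopping time follow from Remark~\ref{meyer_rem_1}.

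The step I expect to be the main obstacle is the simultaneous control of $\bar{Z}_{T_S^\lambda}$ and $\bar{Z}_{T_S^\lambda+}$ in the limit: one must keep track of whether the first entry into $E^\lambda$ is attained at $T_S$ or only approached from the left or the right, identify the corresponding limit of the integrand on each of $H_S^-$, $H_S$, $H_S^+$, and verify that this trichotomy is exhaustive up to evanescence (Corollary~\ref{meyer_cor_1}). All of this hinges delicately on the l\`adl\`ag regularity of $\bar{Z}$ and on $\lsl{Z}$, $\lsr{Z}{}$ being the correct one-sided envelopes, while the uniform-integrability input from class($\mathrm{D}^\Lambda$) is what legitimizes the interchange of limit and expectation.
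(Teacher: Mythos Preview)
The paper does not give its own proof of this proposition; it is quoted from \cite{EK81}, Propositions~2.35--2.36, and appears in the survey part without argument. Your overall strategy---take $\lambda\uparrow 1$ in the identities of Proposition~\ref{optstop_pro_4}, identify the three limiting regimes $H_S^-,H_S,H_S^+$, and pass to the limit under the expectation using class($\mathrm D^\Lambda$)---is exactly El~Karoui's approach, and your treatment of the inclusions and of the limit passage is essentially correct.

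There is, however, a genuine gap in your handling of the stopping-time types. You write that ``the equality $\bar Z_{T_S}=Z_{T_S}$ of two $\Lambda$-measurable processes together with the section characterization of Theorem~\ref{meyer_acc_Thm_2} identifies $(T_S)_{H_S}$ as a $\Lambda$-stopping time.'' Theorem~\ref{meyer_acc_Thm_2} characterizes when a \emph{c\`adl\`ag process} is $\Lambda$-measurable; it says nothing about whether a given $\mathcal F^\Lambda_+$-stopping time is a $\Lambda$-stopping time, and the mere fact that two $\Lambda$-measurable processes agree at a random time does not force that time to be a $\Lambda$-stopping time. The correct argument proceeds differently: on $(H_S^-)^c$ one has $T_S=T_S^\lambda$ for all $\lambda$ close to $1$, and on $H_S$ one has in addition $T_S^\lambda\in E^\lambda$ (since $\lambda\bar Z_{T_S}\leq\bar Z_{T_S}=Z_{T_S}$). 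Thus on $H_S$ the debut $T_S^\lambda$ of the $\Lambda$-measurable set $E^\lambda\cap\llbracket S,\infty\llbracket$ is \emph{attained}, and Lenglart's debut result (\cite{EL80}, Corollaire~2, p.~504, used elsewhere in the paper in the proof of Theorem~\ref{thm:opt_stop}) then gives that $(T_S^\lambda)_{\{T_S^\lambda\in E^\lambda\}}$ is a $\Lambda$-stopping time. Piecing this together over a countable sequence $\lambda_n\uparrow 1$ yields $(T_S)_{H_S}\in\stm$. Without this ingredient your argument for the $\Lambda$-stopping-time property of $(T_S)_{H_S}$ does not go through.
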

		
%%%%%%%%%%%%%%%%%%%%%%%%%%%%%%%%%%%%%%%%%%%%%%%%%%%%%%%%%%%%%%%%%%%%%%
%%%%%%%%%%%%%%%%%%%%%%%%%%%%%%%%%%%%%%%%%%%%%%%%%%%%%%%%%%%%%%%%%%%%%%

\subsubsection{General Divided Stopping Times}\label{optstop_ssec_5}
	
	Even in deterministic examples it is easy to see that
	it is not always possible to solve the optimal
	stopping problem \eqref{optstop_eq_1}. 
	Proposition \ref{optstop_pro_5} though gives a good 
	idea how to relax this problem suitably. 
	
	\begin{Def}[\citing{EK81}{Definition~2.37}{136-137}]\label{optstop_def_4}
			A quadruple $\sigma:=(T,W^-,W,W^+)$ is called a \emph{divided
			stopping time}, if $T$ is an $\mathcal{F}^\Lambda_+$-stopping time
			and $W^-,W,W^+$ form a partition 
			of $\Omega$ such that
			\begin{enumerate}[(i)]
				\item $W^-\in 
				(\mathcal{F}^\Lambda_+)_{T-}$ and $W^-\cap \{T=0\}=\emptyset$,
				\item $W\in \mathcal{F}^\Lambda_T$,
				\item $W^+ \in \mathcal{F}_{T+}^\Lambda$ and $W^+\cap 
				\{T=\infty\}=\emptyset$,
				\item $T_{W^-}$ is an $\mathcal{F}^\Lambda_+$-predictable 
				stopping time,
				\item $T_{W}$ is a $\Lambda$-stopping time.
			\end{enumerate}
			The set of all divided stopping times will be denoted 
			as $\stmd$.
			For a $\Lambda$-measurable positive process $Z$,
			we define the value attained 
			at a divided stopping time $\sigma=(T,W^-,W,W^+)$ as
			$$Z_\sigma:=\lsl{Z}_{T}\mathbb{1}_{W^-}+Z_T\mathbb{1}_{W}
				+\lsr{Z}{T}\mathbb{1}_{W^+}.$$
	\end{Def}
	
	\begin{Rem}\label{optstop_rem_4}
			Proposition \ref{optstop_pro_5} shows that
			$\delta_S:=(T_S,H_S^-,H_S,H_S^+)$ is a divided stopping time.
	\end{Rem}
	
	The $\Lambda$-(super)martingale property can be extended to accommodate divided stopping times:
	
	\begin{Lem}[\citing{EK81}{Lemma 2.38}{137}]\label{optstop_pro_6}
			Let $\sigma=(T,W^-,W,W^+)$ be a divided stopping time
			and $S$ a $\Lambda$-stopping time
			such that $\sigma\geq S$, i.e. $T\geq S$ and $T>S$ on $W^-$.
			Then we have for every positive 
			$\Lambda$-martingale $M:\Omega\times [0,\infty]
			\rightarrow \mathbb{R}$
			\[
				M_S=\mathbb{E}\left[M_\sigma\ \middle\vert\ 
				\mathcal{F}^\Lambda_S\right]
			\]
			and for every positive $\Lambda$-supermartingale 
			$Z:\Omega\times [0,\infty]\rightarrow \mathbb{R}$ that
			\[
				Z_S\geq \mathbb{E}\left[Z_\sigma\
				\middle\vert\  \mathcal{F}^\Lambda_S\right].
			\]
	\end{Lem}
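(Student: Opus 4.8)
The plan is to prove the martingale identity by testing against sets $F\in\mathcal{F}^\Lambda_S$ and splitting $\Omega$ along the partition $W^-,W,W^+$, handling each piece with the sampling theorem appropriate to it; the supermartingale inequality then follows from the decomposition of Proposition~\ref{optstop_pro_1} together with a truncation argument. Throughout I use that the constant $\infty$ lies in $\stm$, so that the $\stm$-martingale property (Definition~\ref{optstop_def_2}) makes $M$ closed, $M_S=\mathbb{E}[M_\infty\mid\mathcal{F}^\Lambda_S]$, whence $\mathbb{E}[M_S\mathbb{1}_F]=\mathbb{E}[M_\infty\mathbb{1}_F]$ for every $F\in\mathcal{F}^\Lambda_S$. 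Since $M_S$ is $\mathcal{F}^\Lambda_S$-measurable and $M_\sigma\geq 0$ is integrable, it suffices to show $\mathbb{E}[M_\sigma\mathbb{1}_F]=\mathbb{E}[M_\infty\mathbb{1}_F]$ for all such $F$. Writing $M_\sigma=\lsl{M}_T\mathbb{1}_{W^-}+M_T\mathbb{1}_W+\lsr{M}{T}\mathbb{1}_{W^+}$ and using that $W^-,W,W^+$ partition $\Omega$, I would establish the three identities
\[
\mathbb{E}[\lsl{M}_T \mathbb{1}_{W^-\cap F}]=\mathbb{E}[M_\infty \mathbb{1}_{W^-\cap F}],\quad
\mathbb{E}[M_T \mathbb{1}_{W\cap F}]=\mathbb{E}[M_\infty \mathbb{1}_{W\cap F}],\quad
\mathbb{E}[\lsr{M}{T} \mathbb{1}_{W^+\cap F}]=\mathbb{E}[M_\infty \mathbb{1}_{W^+\cap F}],
\]
whose sum is the desired equality.

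For the middle ($W$) piece I use the $\Lambda$-stopping time $T_W$ (Definition~\ref{optstop_def_4}(v)): since $T_W\geq S$ and $M$ is a closed $\Lambda$-martingale, $M_{T_W}=\mathbb{E}[M_\infty\mid\mathcal{F}^\Lambda_{T_W}]$; because $W\cap F\in\mathcal{F}^\Lambda_{T_W}$ and $M_{T_W}=M_T$ on $W$, the second identity follows. For the $W^+$ piece I pass to $N:=M_+$, which by Proposition~\ref{optstop_pro_2}(i) is an $\mathcal{F}^\Lambda_+$-martingale closed by $N_\infty=M_\infty$; since $\mathcal{F}^\Lambda_+$ satisfies the usual conditions (Theorem~\ref{meyer_thm_4}), classical optional sampling at the $\mathcal{F}^\Lambda_+$-stopping time $T_{W^+}\geq S$ gives $N_{T_{W^+}}=\mathbb{E}[M_\infty\mid(\mathcal{F}^\Lambda_+)_{T_{W^+}}]$, and with $\lsr{M}{T}=N_T$ on $W^+$ and $W^+\cap F\in(\mathcal{F}^\Lambda_+)_{T_{W^+}}$ the third identity follows. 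The $W^-$ piece is the crux: $T_{W^-}$ is an $\mathcal{F}^\Lambda_+$-predictable stopping time (Definition~\ref{optstop_def_4}(iv)), so the classical predictable sampling theorem for the closed martingale $N$ yields $N_{T_{W^-}-}=\mathbb{E}[M_\infty\mid(\mathcal{F}^\Lambda_+)_{T_{W^-}-}]$; using that $M$ is l\`adl\`ag to identify $\lsl{M}_T=M_{T-}=N_{T-}$ on $W^-$, together with $W^-\cap F\in(\mathcal{F}^\Lambda_+)_{T_{W^-}-}$ (where the hypothesis $T>S$ on $W^-$ enters), gives the first identity. The recurring technical point is the measurability of $W^\bullet\cap F$ in the correct $\sigma$-field, resting on the monotonicity and trace compatibility of the families $\mathcal{F}^\Lambda_\cdot$ recalled in Remark~\ref{meyer_rem_1} and on the section theorem.

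For the supermartingale I would first assume $Z$ of class($D^\Lambda$) and use the decomposition $Z=M-A-B_-$ of Proposition~\ref{optstop_pro_1}(ii). The martingale part is settled above, so the claim $Z_S\geq\mathbb{E}[Z_\sigma\mid\mathcal{F}^\Lambda_S]$ reduces to $\mathbb{E}[A_\sigma+(B_-)_\sigma\mid\mathcal{F}^\Lambda_S]\geq A_S+B_{S-}$. Since $A$ and $B$ are non-decreasing and right-continuous and $T\geq S$ with $T>S$ on $W^-$, a pathwise comparison of each of the three components against the value at $S$ yields $A_\sigma\geq A_S$ and $(B_-)_\sigma\geq B_{S-}$ almost surely, so the inequality follows by monotonicity of conditional expectation. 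For a general positive $\Lambda$-supermartingale I would apply this to the bounded, hence class($D^\Lambda$), supermartingales $Z\wedge c$ and let $c\uparrow\infty$, using that truncation commutes with one-sided limits so that $(Z\wedge c)_\sigma\uparrow Z_\sigma$, and conclude by monotone convergence for conditional expectations.

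I expect the main obstacle to be the $W^-$ piece: it is the only one requiring the full strength of predictable sampling in $\mathcal{F}^\Lambda_+$ and the passage from $M$ to its right-limit process $M_+$, and it is where the strictness $T>S$ on $W^-$ and the delicate identification of the left $\sigma$-field $(\mathcal{F}^\Lambda_+)_{T_{W^-}-}$ are genuinely used; the remaining difficulty is the bookkeeping showing each $W^\bullet\cap F$ lies in the appropriate $\sigma$-field so that the three pieces add up cleanly.
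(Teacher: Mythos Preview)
The paper itself does not prove this lemma; it is quoted without proof from \cite{EK81}, Lemma~2.38. Your argument is sound and is the natural route. The three-piece split along $W^-,W,W^+$ with the corresponding sampling theorems---predictable sampling for the $\mathcal{F}^\Lambda_+$-martingale $M_+$ at $T_{W^-}$, $\Lambda$-sampling at $T_W$, ordinary optional sampling for $M_+$ at $T_{W^+}$---is exactly what the structure of Definition~\ref{optstop_def_4} is designed for. The measurability bookkeeping you flag is routine: $\mathcal{F}^\Lambda_S\subset(\mathcal{F}^\Lambda_+)_S$ together with $S<T_{W^-}$ on $W^-$ gives $F\cap W^-\in(\mathcal{F}^\Lambda_+)_{T_{W^-}-}$, and the identification $(M_+)_-=M_-$ for the l\`adl\`ag process $M$ is standard since the discontinuity set is countable. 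Two small points worth making explicit: first, every $\Lambda$-martingale on $[0,\infty]$ is automatically closed by $M_\infty$ and hence of class($D^\Lambda$), so Proposition~\ref{optstop_pro_2}(i) applies and $M_+$ is a uniformly integrable $\mathcal{F}^\Lambda_+$-martingale as you need; second, in the supermartingale decomposition $Z=M-A-B_-$ the martingale part $M=Z+A+B_-$ is nonnegative, so your positive-martingale identity applies to it directly. The truncation $Z\wedge c$ (a supermartingale by Jensen, since $x\mapsto x\wedge c$ is concave) and monotone convergence correctly extend the inequality to general positive $\Lambda$-supermartingales.
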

	
	By contrast to problem \eqref{optstop_eq_1} its relaxation for 
	divided stopping times 
	\begin{align}\label{optstop_eq_3}
			\text{Maximize }\quad \mathbb{E}[Z_\sigma] \quad 
			\text{ over } \quad \sigma\in \stmd,
	\end{align}
	always hase a solution:
	
	\begin{Thm}[\citing{EK81}{Theorem 2.39}{138}]\label{optstop_thm_3}
		Let $Z$ be a positive $\Lambda$-measurable process
		of class(D$^\Lambda$) and $\bar{Z}$ its Snell envelope.
		Then for every $S\in \stm$
		\begin{align}\label{optstop_eq_7}
			\mathbb{E}\left[\bar{Z}_S\right]=\mathbb{E}
			\left[\bar{Z}_{\delta_S}\right]=\mathbb{E}\left[Z_{\delta_S}\right]
			=\sup_{\sigma\geq S,\, \sigma \in \stmd}
			\mathbb{E}\left[Z_\sigma\right],
		\end{align}
		where $\delta_S:=(T_S,H_S^-,H_S,H_S^+)$ is the divided 
		stopping time given by
		Proposition \ref{optstop_pro_5} and where the supremum
		is taken over all divided
		stopping times $\sigma=(T,W^-,W,W^+)$ such that $T\geq S$
		and $T>S$ on $W^-$. In particular,
		the divided stopping time $\delta_0$ is optimal 
		for \eqref{optstop_eq_3}.
	\end{Thm}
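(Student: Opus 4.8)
The plan is to derive the four-fold equality by pairing the identity~\eqref{optstop_eq_2} of Proposition~\ref{optstop_pro_5} (which already exhibits the candidate optimizer $\delta_S$ and computes its value) with the supermartingale inequality for divided stopping times from Lemma~\ref{optstop_pro_6} (which supplies the matching upper bound over all competitors). The standing structural fact I would invoke throughout is that $\bar{Z}$ is a positive $\Lambda$-supermartingale of class($\mathrm{D}^\Lambda$) and hence l\`adl\`ag by Proposition~\ref{optstop_pro_2}(ii); consequently its left- and right-upper limits are genuine path limits, $\lsl{\bar{Z}}_{T}=\bar{Z}_{T-}$ and $\lsr{\bar{Z}}{T}=\bar{Z}_{T+}$, which is what lets the limsup-type quantities in Definition~\ref{optstop_def_4} be handled cleanly for $\bar{Z}$.

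First I would record admissibility of $\delta_S=(T_S,H_S^-,H_S,H_S^+)$: it is a divided stopping time by Remark~\ref{optstop_rem_4}, and it satisfies $T_S\geq S$ with $T_S>S$ on $H_S^-$, since each $T_S^\lambda\geq S$ and $H_S^-=\{T_S^\lambda<T_S\text{ for all }\lambda\}$. Reading $Z_{\delta_S}$ through Definition~\ref{optstop_def_4}, the integrand on the right-hand side of~\eqref{optstop_eq_2} is exactly $Z_{\delta_S}$, so~\eqref{optstop_eq_2} already states $\mathbb{E}[\bar{Z}_S]=\mathbb{E}[Z_{\delta_S}]$. To pass to $\mathbb{E}[\bar{Z}_{\delta_S}]$ I would prove the stronger pathwise identity $\bar{Z}_{\delta_S}=Z_{\delta_S}$ a.s.: the three inclusions $H_S^-\subset\{\bar{Z}_{T_S-}=\lsl{Z}_{T_S}\}$, $H_S\subset\{\bar{Z}_{T_S}=Z_{T_S}\}$ and $H_S^+\subset\{\bar{Z}_{T_S+}=\lsr{Z}{T_S}\}$ from Proposition~\ref{optstop_pro_5}, combined with the l\`adl\`ag identifications above, force the $\bar{Z}$-value and the $Z$-value to coincide on each atom of the partition.

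For the upper bound I would fix an arbitrary $\sigma=(T,W^-,W,W^+)\in\stmd$ with $T\geq S$ and $T>S$ on $W^-$, and first establish the pathwise comparison $Z_\sigma\leq\bar{Z}_\sigma$ a.s. The ingredient is $\bar{Z}\geq Z$ up to evanescence, available either from the domination property in Theorem~\ref{optstop_thm_1} via Corollary~\ref{meyer_cor_1}, or directly from $\bar{Z}={}^\Lambda(\bar{Z}_+)\vee Z$ in Proposition~\ref{optstop_pro_4}. Taking left- and right-limits of the l\`adl\`ag process $\bar{Z}$ then yields $\bar{Z}_{T-}\geq\lsl{Z}_{T}$ and $\bar{Z}_{T+}\geq\lsr{Z}{T}$, which is precisely $Z_\sigma\leq\bar{Z}_\sigma$ read off on $W^-$, $W$, $W^+$. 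Applying Lemma~\ref{optstop_pro_6} to the positive $\Lambda$-supermartingale $\bar{Z}$ gives $\bar{Z}_S\geq\mathbb{E}[\bar{Z}_\sigma\mid\mathcal{F}^\Lambda_S]$; taking expectations and using $\bar{Z}_\sigma\geq Z_\sigma$ delivers $\mathbb{E}[\bar{Z}_S]\geq\mathbb{E}[Z_\sigma]$ for every admissible $\sigma$.

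Combining the two parts, $\mathbb{E}[Z_{\delta_S}]=\mathbb{E}[\bar{Z}_S]$ is an attained upper bound for $\sup_{\sigma\geq S}\mathbb{E}[Z_\sigma]$, which closes all four equalities simultaneously; the final assertion follows by specializing to $S\equiv0$, where the side condition $\sigma\geq0$ is exactly what Definition~\ref{optstop_def_4}(i),(iii) already build into $\stmd$, so the supremum in~\eqref{optstop_eq_3} ranges over all of $\stmd$ and is attained at $\delta_0$. I expect the only genuinely delicate step to be the passage between the limsup-type values $\lsl{Z}$, $\lsr{Z}{}$ entering the divided-stopping-time functional and the true path limits of the envelope $\bar{Z}$: this is where l\`adl\`ag regularity of $\bar{Z}$ is indispensable and, I suspect, where the \emph{minor issue} advertised in the introduction is located; the remainder is bookkeeping over the partition $\{W^-,W,W^+\}$ together with a single application of Lemma~\ref{optstop_pro_6}.
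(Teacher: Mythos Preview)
The paper does not give its own proof of this theorem: it is stated in the survey section~2.2.5 with a citation to \cite{EK81}, Theorem~2.39, and no argument is supplied. Your proof is correct and is, in outline, precisely the argument one would reconstruct from the surrounding tools the paper quotes: the identity $\mathbb{E}[\bar Z_S]=\mathbb{E}[Z_{\delta_S}]$ is exactly~\eqref{optstop_eq_2}; the pathwise equality $\bar Z_{\delta_S}=Z_{\delta_S}$ follows from the three inclusions in Proposition~\ref{optstop_pro_5} together with the l\`adl\`ag regularity of $\bar Z$ from Proposition~\ref{optstop_pro_2}(ii); and the upper bound is Lemma~\ref{optstop_pro_6} applied to $\bar Z$ combined with the pointwise domination $Z\leq\bar Z$ and its left/right limsup consequences.

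One small correction to your closing remark: the \emph{minor issue} advertised in the introduction is not located here. As Remark~\ref{optstop_rem_8} makes explicit, the gap in \cite{EK81} concerns Theorem~\ref{optstop_thm_6}(i) in the optional case, where upper-semicontinuity in expectation only yields ${}^{\mathcal P}Z\geq {}^\ast Z$ rather than pathwise left-upper-semicontinuity of $Z$ itself; the repair is carried out in Proposition~\ref{extension_pro_2}. The passage from $\lsl{Z}$, $\lsr{Z}{}$ to the genuine path limits of $\bar Z$ that you flag is, by contrast, unproblematic once the l\`adl\`ag property of $\bar Z$ is in hand.
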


	The optimal divided stopping time $\delta_0$ is 
	constructed with the help of $T_0$, which is constructed by
	using part (i) of Theorem \ref{optstop_thm_2}.
	It is also possible to construct a second optimal
	divided stopping time with the help of the second condition
	as the next theorem shows.
		
	\begin{Thm}[\citing{EK81}{Theorem 2.40}{138}]\label{optstop_thm_4}
			Let $Z$ be a positive $\Lambda$-measurable process 
			of class(D$^\Lambda$) and denote by 
			$\bar{Z}=\bar{M}-\bar{A}-\bar{B}_-$ its Snell envelope
			with decomposition from Proposition
			\ref{optstop_pro_3}.
			Define 
			\begin{align}\label{Main:141}
				T^S:=\inf \left\{u\geq S\ \middle\vert\  \bar{A}_u+\bar{B}_u
				>\bar{A}_{S}+\bar{B}_{S-}\right\}
			\end{align}
			 and the $\mathcal{F}^\Lambda_{T^S+}$-measurable sets
			\begin{align*}
				K_S^-&:=\{\bar{A}_{T^S}>\bar{A}_S\},\\
				K_S&:=\left\{\bar{A}_{T^S}=\bar{A}_S,
				\quad \bar{B}_{T^S}>\bar{B}_{S-}\right\},\\
				K_S^+&:=\left\{\bar{A}_{T^S}+\bar{B}_{T^S}
				=\bar{A}_{S}+\bar{B}_{S-}\right\}.
			\end{align*}
			The quadruple $\sigma_S:=(T^S,K_S^-,K_S,K_S^+)$ is a 
			divided stopping time satisfying
			\begin{enumerate}[(i)]
				\item $\bar{Z}_{\sigma_S}=Z_{\sigma_S}$,
				\item $\mathbb{E}[\bar{Z}_S]=\mathbb{E}[Z_{\sigma_S}]$ 
				for all $S\in \stm$.
			\end{enumerate}
			In particular, the divided stopping time $\sigma_0$ is 
			optimal for \eqref{optstop_eq_3}.
	\end{Thm}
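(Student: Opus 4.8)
The plan is to proceed in three steps: (1) verify that $\sigma_S$ is a divided stopping time; (2) prove the expectation identity behind (ii) in the sharp form $\mathbb{E}[\bar{Z}_{\sigma_S}\mid\mathcal{F}^\Lambda_S]=\bar{Z}_S$, exploiting that the finite-variation part $\bar{A}+\bar{B}$ of $\bar{Z}$ cannot increase strictly on $\stsetRO{S}{T^S}$; and (3) establish the pathwise identity (i) $\bar{Z}_{\sigma_S}=Z_{\sigma_S}$ from the jump relations of Propositions \ref{optstop_pro_2} and \ref{optstop_pro_4}. For Step (1), note that $T^S$ is the debut after $S$ of the $\mathcal{F}^\Lambda$-optional set $\{(\omega,u):\bar{A}_u+\bar{B}_u>\bar{A}_S+\bar{B}_{S-}\}$ (the level being $\mathcal{F}^\Lambda_S$-measurable and constant in $u$), hence an $\mathcal{F}^\Lambda_+$-stopping time. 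On $K_S^-$ the predictable process $\bar{A}$ jumps at $T^S$, so the graph of $(T^S)_{K_S^-}$ lies in the thin predictable set $\{\bar{A}>\bar{A}_-\}$ and $(T^S)_{K_S^-}$ is $\mathcal{F}^\Lambda_+$-predictable; on $K_S$ the $\Lambda$-measurable process $\bar{B}$ jumps at $T^S$, so the graph lies in $\{\bar{B}>\bar{B}_-\}\in\Lambda$ and $(T^S)_{K_S}$ is a $\Lambda$-stopping time. The required measurabilities of $K_S^-,K_S,K_S^+$ follow, and $\{T^S=\infty\}$ is absorbed into $K_S$ using the conventions $\lsr{Z}{\infty}=Z_\infty$ and $\bar{Z}_\infty=Z_\infty$.

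For Step (2), the crucial observation is that on $\stsetRO{S}{T^S}$ one has $\bar{A}_u=\bar{A}_S$ and $\bar{B}_u=\bar{B}_{S-}$: indeed, for $S\le u<T^S$ the definition of $T^S$ gives $\bar{A}_u+\bar{B}_u\le \bar{A}_S+\bar{B}_{S-}$, while monotonicity gives $\bar{A}_u\ge\bar{A}_S$ and $\bar{B}_u\ge\bar{B}_{S-}$, so all inequalities are equalities. Evaluating the three summands of the divided-stopping-time value, this constancy, the right-continuity of $\bar{A},\bar{B}$, and the defining relations of $K_S^-,K_S,K_S^+$ yield $\bar{A}_{\sigma_S}=\bar{A}_S$ and $(\bar{B}_-)_{\sigma_S}=\bar{B}_{S-}$ identically on $\Omega$. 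Since $\bar{Z}=\bar{M}-\bar{A}-\bar{B}_-$ with $\bar{M}=\bar{Z}+\bar{A}+\bar{B}_-\ge0$ a positive $\Lambda$-martingale, this gives $\bar{Z}_{\sigma_S}=\bar{M}_{\sigma_S}-\bar{A}_S-\bar{B}_{S-}$. As $\sigma_S\ge S$ (observe $\{T^S=S\}\subset K_S$, so $T^S>S$ on $K_S^-$), Lemma \ref{optstop_pro_6} applied to $\bar{M}$ yields $\mathbb{E}[\bar{M}_{\sigma_S}\mid\mathcal{F}^\Lambda_S]=\bar{M}_S$, and since $\bar{A}_S,\bar{B}_{S-}$ are $\mathcal{F}^\Lambda_S$-measurable I conclude $\mathbb{E}[\bar{Z}_{\sigma_S}\mid\mathcal{F}^\Lambda_S]=\bar{Z}_S$, in particular $\mathbb{E}[\bar{Z}_S]=\mathbb{E}[\bar{Z}_{\sigma_S}]$.

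For Step (3), writing $\bar{Z}_{\sigma_S}=\bar{Z}_{T^S-}\mathbb{1}_{K_S^-}+\bar{Z}_{T^S}\mathbb{1}_{K_S}+\bar{Z}_{T^S+}\mathbb{1}_{K_S^+}$ (as $\bar{Z}$ is l\`adl\`ag) and comparing termwise with $Z_{\sigma_S}=\lsl{Z}_{T^S}\mathbb{1}_{K_S^-}+Z_{T^S}\mathbb{1}_{K_S}+\lsr{Z}{T^S}\mathbb{1}_{K_S^+}$, it suffices to match the three pieces. On $K_S^-$ the constancy above gives $\bar{A}_{T^S-}=\bar{A}_S<\bar{A}_{T^S}$, so $\Delta\bar{A}_{T^S}>0$, and the inclusion $\{\bar{A}>\bar{A}_-\}\subset\{\bar{Z}_-=\lsl{Z}\}$ of Proposition \ref{optstop_pro_4} gives $\bar{Z}_{T^S-}=\lsl{Z}_{T^S}$. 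On $K_S$ one has $\bar{B}_{T^S}>\bar{B}_{S-}=\bar{B}_{T^S-}$, so $\Delta\bar{B}_{T^S}>0$, and $\{\bar{B}>\bar{B}_-\}\subset\{\bar{Z}=Z\}$ gives $\bar{Z}_{T^S}=Z_{T^S}$.

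The remaining piece $K_S^+$ is the main obstacle: there neither $\bar{A}$ nor $\bar{B}$ jumps at $T^S$, yet $\bar{A}+\bar{B}$ increases strictly in every right-neighbourhood of $T^S$, and I must show $\bar{Z}_{T^S+}=\lsr{Z}{T^S}$. The inequality $\bar{Z}_{T^S+}\ge\lsr{Z}{T^S}$ is immediate from $\bar{Z}\ge Z$ and the existence of right limits; the reverse is the hard part. It asserts that the compensator $\bar{A}+\bar{B}$ of the right-continuous $\mathcal{F}^\Lambda_+$-supermartingale $\bar{Z}_+=\bar{M}_+-\bar{A}-\bar{B}$ can charge a right-neighbourhood of $T^S$ only where $\bar{Z}_+$ meets the right-upper-semicontinuous obstacle $\lsr{Z}{}$. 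I would establish this by approximating $T^S$ from the right along points of increase of $\bar{A}+\bar{B}$ and using the contact relations of Proposition \ref{optstop_pro_4} (contact $\bar{Z}=Z$ at the $\bar{B}$-jumps and $\bar{Z}_-=\lsl{Z}$ on the support of $\bar{A}$), then passing to the limit via the right limits of $\bar{Z}$; equivalently, one identifies $\bar{Z}_+$ as the $\mathcal{O}(\mathcal{F}^\Lambda_+)$-Snell envelope of $\lsr{Z}{}$ and invokes minimality exactly as in the approximation $T_S^\lambda\uparrow T_S$ of Proposition \ref{optstop_pro_5}, whose set $H_S^+\subset\{\bar{Z}_{T_S+}=\lsr{Z}{T_S}\}$ records precisely this right-contact phenomenon. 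Granting (i), part (ii) follows from $\mathbb{E}[\bar{Z}_S]=\mathbb{E}[\bar{Z}_{\sigma_S}]=\mathbb{E}[Z_{\sigma_S}]$; specializing to $S=0$ and comparing with Theorem \ref{optstop_thm_3} gives $\mathbb{E}[Z_{\sigma_0}]=\mathbb{E}[\bar{Z}_0]=\sup_{\sigma\in\stmd}\mathbb{E}[Z_\sigma]$, so $\sigma_0$ is optimal.
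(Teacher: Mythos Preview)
The paper itself does not supply a proof of this theorem; it is quoted from \cite{EK81} as part of the survey in Section~\ref{optstop_ssec_5}. So there is no ``paper's proof'' to compare against, and I evaluate your argument on its own merits.

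Your Steps~(1) and~(2) are correct and follow the natural route. The constancy $\bar A_u=\bar A_S$, $\bar B_u=\bar B_{S-}$ on $\stsetRO{S}{T^S}$ is exactly the right observation, and it cleanly yields $\bar A_{\sigma_S}=\bar A_S$, $(\bar B_-)_{\sigma_S}=\bar B_{S-}$, hence $\bar Z_{\sigma_S}=\bar M_{\sigma_S}-\bar A_S-\bar B_{S-}$; Lemma~\ref{optstop_pro_6} then gives $\mathbb E[\bar Z_{\sigma_S}\mid\mathcal F^\Lambda_S]=\bar Z_S$. In Step~(1) you should say explicitly that the measurability requirements (i)--(iii) in Definition~\ref{optstop_def_4} follow \emph{a posteriori} from (iv) and (v) (e.g.\ $K_S^-=\{(T^S)_{K_S^-}<\infty\}\in\mathcal F^\Lambda_{T^S-}$ once $(T^S)_{K_S^-}$ is predictable), and you are right that the set $\{T^S=\infty\}$ has to be moved from $K_S^+$ to $K_S$; this is harmless since $Z_\infty=\lsr{Z}{\infty}$ and $\bar Z_\infty=Z_\infty$.

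The genuine gap is Step~(3) on $K_S^+$. You reduce correctly to showing $\bar Z_{T^S+}\le \lsr{Z}{T^S}$ there, but neither of your two suggested routes is carried out. The first route (``approximate $T^S$ from the right along points of increase of $\bar A+\bar B$'') is incomplete because Proposition~\ref{optstop_pro_4} only gives contact at \emph{jumps} of $\bar A$ and $\bar B$; on $K_S^+$ the increase immediately to the right of $T^S$ may come entirely from the continuous part of the predictable process $\bar A$, and for that part no contact relation is recorded in the survey. You would need an additional statement to the effect that the continuous measure $d\bar A^c$ is carried by (the closure of) $\{\bar Z=Z\}$, which is not available from the cited propositions. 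The second route (``$\bar Z_+$ is the $\mathcal O(\mathcal F^\Lambda_+)$-Snell envelope of $\lsr{Z}{}$'') would indeed close the gap, but it is a nontrivial assertion in its own right and is not established anywhere in your argument or in the surveyed results; Proposition~\ref{optstop_pro_2}(ii) only says $\bar Z$ is the $\Lambda$-projection of \emph{some} $\mathcal O(\mathcal F^\Lambda_+)$-supermartingale $\hat Z$ with $\hat Z_+=\bar Z_+$, not that $\bar Z_+$ is itself a Snell envelope. So as written the proof of~(i) on $K_S^+$, and hence of~(ii), is not complete.
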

	
	\begin{Rem}[\citing{EK81}{Remark}{140}]\label{optstop_rem_5}
			The optimality conditions given in Theorem 
			\ref{optstop_thm_2} show that no optimal  stopping time for
			\eqref{optstop_eq_1} can be smaller than $\delta_0$ and none can be larger
			than $\sigma_0$. 
			Indeed, $\bar{Z}$ looses the martingale property after $T^0$ and
			therefore $T^0$ is by Theorem \ref{optstop_thm_2} 
			dominating all possible optimal stopping times.
			Taking a closer look at the set $K_0^-$, we see that actually on this set $\bar{Z}$ has
			already lost the martingale property at $T^0$ and, 
			therefore, $T^0$ is strictly larger than all optimal
			stopping times for \eqref{optstop_eq_1} on this set.
			On the other hand, we see 
			that $T_0$ is smaller than the 
			entry time of the set $\{\bar{Z}=Z\}$, which
			we denote by $\bar{T}_0$,
			and $T_0$ is strictly smaller than $\bar{T}_0$ on $
			H_0^-\cap \{T_0<\bar{T}_0\}$.
			Hence, the divided stopping time 
			$(T_0,H_0^-\cap \{T_0<\bar{T}_0\},(H_0^-\cap
			\{T_0=\bar{T}_0\})\cup H_0,H_0^+)$ is smaller 
			than all optimal stopping times for \eqref{optstop_eq_1}.
	\end{Rem}

%%%%%%%%%%%%%%%%%%%%%%%%%%%%%%%%%%%%%%%%%%%%%%%%%%%%%%%%%%%%%%%%%%%%%%
%%%%%%%%%%%%%%%%%%%%%%%%%%%%%%%%%%%%%%%%%%%%%%%%%%%%%%%%%%%%%%%%%%%%%%

\subsubsection{Conditions for optimality
 in the optional case}\label{optstop_ssec_6}
    In this section, let us consider the classical case where $\Lambda=\mathcal{O}(\FA)$ is the optional $\sigma$-field of a right-continuous filtration $(\FA_t)_{t\geq 0}$ with $\FA_\infty:=\bigvee_{t\geq 0}\FA_t$ and $\FA_{0-}\subset \FA_0$ a $\WM$-complete $\sigma$-field.
	Then $\stm$ coincides with the set of
	``classical'' stopping times with respect to 
	$(\mathcal{F}_t)_{t\geq 0}$, which we denote by
	$\st$. 
	Furthermore we assume, that $Z$ is an optional, positive 
	process of class(D). The optimal stopping problem is then	to
	\begin{align}\label{optstop_eq_4}
		\text{Maximize} \quad \mathbb{E}[Z_T] \quad \text{ over } 
		\quad T\in \st
	\end{align}
	and we get the following first optimality
	result:
	
	\begin{Thm}[\citing{EK81}{Theorem 2.41}{140}]
			\label{optstop_thm_5}
			Let $\Lambda=\mathcal{O}(\mathcal{F})$. Assume $Z$ is an optional,
			positive process of class(D) and denote by $\bar{Z}$ its Snell envelope. 
			\begin{enumerate}[(i)]
				\item If the process $Z$ is upper-semicontinuous 
						from the right and from the left, i.e.
						$\lsl{Z}\leq Z$ and $\lsr{Z}{}\leq Z$, 
						then the entry time $\bar{T}_0$ of the set
						$\{Z=\bar{Z}\}$ is optimal for \eqref{optstop_eq_4} and 
						it is equal to $T_0$ from \eqref{Main:140}.
						In  particular,					
						it is the smallest optimal stopping time.
				\item If $\bar{Z}$ satisfies $\bar{Z}_-={^\mathcal{P} \bar{Z}}$
				and $Z$ is upper-semicontinuous from the right, then
				the entry time $\bar{T}^0$ of the set $\{\bar{M}\neq \bar{Z}\}$
				is optimal for \eqref{optstop_eq_4} 
				with $\bar{M}$ from Proposition \ref{optstop_pro_3} and
				$\bar{T}^0=T^0$ with $T^0$ from
				\eqref{Main:141}. In particular,
				 $\bar{T}^0$ is the largest optimal stopping time.
			\end{enumerate}
	\end{Thm}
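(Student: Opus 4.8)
The plan is to specialise the relaxed optimality results, Theorems~\ref{optstop_thm_3} and~\ref{optstop_thm_4}, to the optional case $\Lambda=\mathcal{O}(\mathcal{F})$, where $\stm=\st$, and to show that under the stated semicontinuity hypotheses the optimal \emph{divided} stopping times $\delta_0=(T_0,H_0^-,H_0,H_0^+)$ and $\sigma_0=(T^0,K_0^-,K_0,K_0^+)$ collapse to genuine stopping times. Throughout I use that, under the usual conditions, the débuts of the optional sets $\{Z=\bar{Z}\}$ and $\{\bar{M}\neq\bar{Z}\}$ are honest stopping times in $\st$ (so $\bar{T}_0,\bar{T}^0$ are admissible for \eqref{optstop_eq_4}) and that the a~priori $\mathcal{F}^\Lambda_+$-stopping times $T_0,T^0$ already lie in $\st$ by right-continuity. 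I also use that for a class(D$^\Lambda$) supermartingale optional sampling gives $\mathbb{E}[\bar{Z}_\tau]\leq\mathbb{E}[\bar{Z}_0]$ for every $\tau\in\st$, with equality forcing $(\bar{Z}_{t\wedge\tau})_t$ to be a $\Lambda$-martingale.

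For part~(i) the engine is a single chain of inequalities. On $H_0^-$ the divided value is $\lsl{Z}_{T_0}=\bar{Z}_{T_0-}$, on $H_0^+$ it is $\lsr{Z}{T_0}=\bar{Z}_{T_0+}$, and on $H_0$ it is $Z_{T_0}$ (Proposition~\ref{optstop_pro_5}). The semicontinuity assumptions $\lsl{Z}\leq Z$ and $\lsr{Z}{}\leq Z$ give $\lsl{Z}_{T_0}\leq Z_{T_0}$ and $\lsr{Z}{T_0}\leq Z_{T_0}$, hence $Z_{\delta_0}\leq Z_{T_0}$ pointwise. Combining this with $Z\leq\bar{Z}$, optional sampling, and the relaxed optimality $\mathbb{E}[\bar{Z}_0]=\mathbb{E}[Z_{\delta_0}]$ from Theorem~\ref{optstop_thm_3}, I obtain
\[
\mathbb{E}[Z_{T_0}]\leq\mathbb{E}[\bar{Z}_{T_0}]\leq\mathbb{E}[\bar{Z}_0]=\mathbb{E}[Z_{\delta_0}]\leq\mathbb{E}[Z_{T_0}],
\]
so every inequality is an equality. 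Then $\mathbb{E}[Z_{T_0}]=\mathbb{E}[\bar{Z}_{T_0}]$ with $Z_{T_0}\leq\bar{Z}_{T_0}$ yields $Z_{T_0}=\bar{Z}_{T_0}$ a.s., while $\mathbb{E}[\bar{Z}_{T_0}]=\mathbb{E}[\bar{Z}_0]$ makes $\bar{Z}$ a $\Lambda$-martingale up to $T_0$; by Theorem~\ref{optstop_thm_2} this is exactly optimality of $T_0$. Moreover $Z_{T_0}=\bar{Z}_{T_0}$ shows $T_0\in\{Z=\bar{Z}\}$, hence $\bar{T}_0\leq T_0$, whereas \eqref{Main:140} exhibits $T_0$ as the début of a set containing $\{Z=\bar{Z}\}$, so $T_0\leq\bar{T}_0$; thus $T_0=\bar{T}_0$. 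Minimality is immediate: any optimal $U$ satisfies $Z_U=\bar{Z}_U$ by Theorem~\ref{optstop_thm_2}(i), so $U\geq\bar{T}_0$.

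For part~(ii) I run the same scheme on $\sigma_0$, using only right semicontinuity, after eliminating the left part. The assumption $\bar{Z}_-={^\mathcal{P}\bar{Z}}$ means $\Delta\bar{A}=\bar{Z}_--{^\mathcal{P}\bar{Z}}=0$ (Proposition~\ref{optstop_pro_2}), i.e.\ $\bar{A}$ is continuous; since $T^0=\inf\{u:\bar{A}_u+\bar{B}_u>0\}\leq\inf\{u:\bar{A}_u>0\}$ and $\bar{A}$ is continuous with $\bar{A}_0=0$, one gets $\bar{A}_{T^0}=0$, so $K_0^-=\{\bar{A}_{T^0}>\bar{A}_0\}$ is evanescent. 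Hence $Z_{\sigma_0}=Z_{T^0}\mathbb{1}_{K_0}+\lsr{Z}{T^0}\mathbb{1}_{K_0^+}\leq Z_{T^0}$, and the chain of part~(i), now with $\mathbb{E}[\bar{Z}_0]=\mathbb{E}[Z_{\sigma_0}]$ from Theorem~\ref{optstop_thm_4}(ii), gives optimality of $T^0$ and the martingale property up to $T^0$. To identify $T^0$ with the début $\bar{T}^0$ of $\{\bar{M}\neq\bar{Z}\}=\{\bar{A}+\bar{B}_->0\}$, I compare $\inf\{\bar{A}+\bar{B}>0\}$ with $\inf\{\bar{A}+\bar{B}_->0\}$: since $\bar{A},\bar{B}$ are non-decreasing and right-continuous with $\bar{B}_{0-}=0$, the first times at which $\bar{B}$ and $\bar{B}_-$ become positive coincide (whether the first increase of $\bar{B}$ is a jump or continuous), and the contributions of $\bar{A}$ are identical, so $T^0=\bar{T}^0$. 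Maximality follows from Theorem~\ref{optstop_thm_2}(ii): any optimal $U$ keeps $\bar{Z}$ a $\Lambda$-martingale on $\stsetC{0}{U}$, forcing $\bar{A}+\bar{B}_-\equiv0$ there, so $U\leq T^0$.

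The step I expect to be most delicate is the reconciliation in part~(ii) of the ``value'' time $T^0$, defined through $\bar{A}_u+\bar{B}_u$, with the ``début'' time $\bar{T}^0$, defined through $\bar{A}_t+\bar{B}_{t-}$, together with the vanishing of $K_0^-$: this is precisely where the continuity of $\bar{A}$ extracted from $\bar{Z}_-={^\mathcal{P}\bar{Z}}$ is indispensable and where the left-/right-limit bookkeeping for the purely discontinuous part $\bar{B}$ must be carried out carefully. By contrast, part~(i) is essentially forced once one observes that the semicontinuity hypotheses squeeze the divided value $Z_{\delta_0}$ below $Z_{T_0}$, turning the relaxed optimum into a genuine one.
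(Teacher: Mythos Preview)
Your proof is correct and follows precisely the route the paper indicates in Remark~\ref{optstop_rem_6}: reduce to the divided stopping times $\delta_0$ and $\sigma_0$ of Theorems~\ref{optstop_thm_3} and~\ref{optstop_thm_4}, then exploit that in the optional case $T_0,T^0\in\st$ so that the $\Lambda$-supermartingale inequality $\mathbb{E}[\bar{Z}_{T_0}]\leq\mathbb{E}[\bar{Z}_0]$ (resp.\ with $T^0$) is available and closes the chain of inequalities. Two cosmetic points worth tightening: in part~(ii), your phrase ``whether the first increase of $\bar{B}$ is a jump or continuous'' is slightly off since $\bar{B}$ is purely discontinuous, but the conclusion $T^0=\bar{T}^0$ is correct once you note that $\bar{A}_{T^0}=0$ and $\bar{B}_{t-}=0$ for $t\leq T^0$ force both débuts to agree; and in the maximality step, ``$\bar{A}+\bar{B}_-\equiv 0$ on $\stsetC{0}{U}$'' literally yields $U\leq\bar{T}^0$, with $U\leq T^0$ then following from the identification $T^0=\bar{T}^0$ you just proved.
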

	
	\begin{Rem}\label{optstop_rem_6}
	\begin{enumerate}[(i)]
	    \item 
			The proof of the previous result is mainly based on
			the results on divided stopping times, which can also be
			used for general Meyer-$\sigma$-fields.
			But the reason why we cannot get the same general results
			for those more general $\sigma$-fields
			is that $T_S$ and $T^S$ are not necessarily Meyer-stopping times and, therefore we cannot use the $\Lambda$-supermartingale 
			property of $\bar{Z}$ in the proof. 
			\item The condition $\bar{Z}_-={^\mathcal{P}\bar{Z}}$, in part (ii) of the previous theorem, means that the Snell envelope $\bar{Z}$ of $Z$ has to be left-upper-semicontinuous in expectation (see Lemma \ref{extension_lem_2} (ii) below). It will hold when $Z$ is upper-semicontinuous in expectation. Note that $\bar{Z}_-\leq {^\mathcal{P}\bar{Z}}$ is always fulfilled since $\bar{Z}$ is a supermartingale. 
			\end{enumerate}
	\end{Rem}
	
	The following result gives a nice equivalence for
	the previous pathwise properties of $Z$:
	 
	\begin{Pro}[Compare \citing{BS77}{Theorem II.1}{305}]
	\label{optstop_pro_7}
		An optional positive process $Z:\Omega\times
		[0,\infty)\rightarrow \mathbb{R}$ of class(D) satisfies
		\[
			\mathbb{E}[Z_T]\geq \limsup_{n\rightarrow \infty}
			\mathbb{E}[Z_{T_n}]
		\]
		for every monotone sequences $T_n$ of stopping times 
		converging to $T$ if and only if
		$Z$ has upper-semicontinuous paths from the right on
		$[0,\infty)$ and 
		\[
			^\mathcal{P} Z_t\geq \lsl{Z}_t \quad \text{for $t\in (0,\infty]$
			almost surely.}
		\]
	\end{Pro}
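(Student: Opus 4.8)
The plan is to prove the stated equivalence in two directions, writing (A) for the inequality $\mathbb{E}[Z_T]\geq\limsup_n\mathbb{E}[Z_{T_n}]$ along all monotone sequences, and (B) for the conjunction $\lsr{Z}{}\leq Z$ and $^\mathcal{P}Z_t\geq\lsl{Z}_t$. The single fact that makes both directions run is a \emph{reverse Fatou inequality}: since $Z\geq 0$ is of class(D), for any sequence of stopping times $(R_n)$ one has $\limsup_n\mathbb{E}[Z_{R_n}]\leq\mathbb{E}[\limsup_n Z_{R_n}]$. I would prove this from the splitting $Z_{R_n}\leq(Z_{R_n}\wedge K)+Z_{R_n}\mathbb{1}_{\{Z_{R_n}>K\}}$, applying the elementary reverse Fatou lemma to the bounded part $Z_{R_n}\wedge K$, using uniform integrability to make the tail term small uniformly in $n$, and letting $K\uparrow\infty$ by monotone convergence (legitimate because $Z\geq0$).

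For \textbf{(B)$\Rightarrow$(A)} I would treat the decreasing and increasing cases of a monotone sequence $T_n\to T$ separately. If $T_n\downarrow T$, then pathwise $\limsup_n Z_{T_n}\leq\lsr{Z}{T}\leq Z_T$ by right-upper-semicontinuity, so the reverse Fatou inequality immediately gives $\limsup_n\mathbb{E}[Z_{T_n}]\leq\mathbb{E}[Z_T]$. If $T_n\uparrow T$, I would localize on $A:=\{T_n<T\text{ for all }n\}\in\mathcal{F}^\Lambda_{T-}$, on whose complement $Z_{T_n}\to Z_T$ by eventual equality and uniform integrability. On $A$ the restriction $T_A$ is a predictable $\mathcal{F}^\Lambda$-stopping time announced by the $T_n$, and pathwise $\limsup_n Z_{T_n}\leq\lsl{Z}_{T}$; reverse Fatou yields $\limsup_n\mathbb{E}[Z_{T_n}\mathbb{1}_A]\leq\mathbb{E}[\lsl{Z}_{T_A}\mathbb{1}_{\{T_A<\infty\}}]$. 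Finally the defining property of the predictable projection at the predictable time $T_A$ (Theorem \ref{meyer_thm_3} applied to $\Lambda=\mathcal{P}(\mathcal{F})$) together with the hypothesis $\lsl{Z}\leq{}^\mathcal{P}Z$ gives $\mathbb{E}[\lsl{Z}_{T_A}\mathbb{1}_{\{T_A<\infty\}}]\leq\mathbb{E}[{}^\mathcal{P}Z_{T_A}\mathbb{1}_{\{T_A<\infty\}}]=\mathbb{E}[Z_{T_A}\mathbb{1}_{\{T_A<\infty\}}]=\mathbb{E}[Z_T\mathbb{1}_A]$, which combined with the $A^c$-part is exactly (A).

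For \textbf{(A)$\Rightarrow$(B)} I would argue by contraposition, producing offending monotone sequences via the section theorems. If $\{\lsr{Z}{}>Z\}$ is not evanescent, choose rationals $a<b$ so that the progressive set $\Gamma:=\{\lsr{Z}{}\geq b\}\cap\{Z\leq a\}$ is not evanescent (recall from Remark \ref{optstop_rem_3} that $\lsr{Z}{}$ is progressive), and section it to obtain a stopping time $T$ with $\mathbb{P}(T<\infty)>0$ and $\lsr{Z}{T}\geq b>a\geq Z_T$ on $\{T<\infty\}$. I would then apply the Meyer Section Theorem (Theorem \ref{meyer_thm_2}) \emph{iteratively} to the optional sets $\{(\omega,s):T(\omega)<s<T(\omega)+1/n,\ Z_s(\omega)>b-\eta\}$, whose projections contain $\{T<\infty\}$ since $\lsr{Z}{T}\geq b$, choosing at each stage a section strictly below the previous one so as to obtain a \emph{decreasing} sequence $T_n\downarrow T$ with $T_n>T$ and $Z_{T_n}>b-\eta$ off a set of probability at most $\sum_m\epsilon_m$. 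Because $Z\geq0$ this forces $\limsup_n\mathbb{E}[Z_{T_n}]\geq(b-\eta)\,(\mathbb{P}(T<\infty)-\sum_m\epsilon_m)$, which for small $\eta$ and $\sum_m\epsilon_m$ exceeds $a\,\mathbb{P}(T<\infty)\geq\mathbb{E}[Z_T\mathbb{1}_{\{T<\infty\}}]$, contradicting (A). The condition $^\mathcal{P}Z\geq\lsl{Z}$ is handled symmetrically: if it fails, I section the \emph{predictable} set $\{{}^\mathcal{P}Z\leq a<b\leq\lsl{Z}\}$ (predictability of both processes is again Remark \ref{optstop_rem_3}) to get a \emph{predictable} $T$, build an increasing $S_n\uparrow T$ with $Z_{S_n}>b-\eta$ by iterated sections of $\{(\omega,s):T-1/n<s<T,\ Z_s>b-\eta\}$, and contradict (A) via the predictable-projection identity $\mathbb{E}[Z_T\mathbb{1}_{\{T<\infty\}}]=\mathbb{E}[{}^\mathcal{P}Z_T\mathbb{1}_{\{T<\infty\}}]\leq a\,\mathbb{P}(T<\infty)$.

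The step I expect to be the \textbf{main obstacle} is this iterative construction in (A)$\Rightarrow$(B): the Meyer Section Theorem only returns a stopping time whose graph lies in the target set up to a probability loss $\epsilon_n$, so one must diagonalize carefully, selecting each successive section strictly inside the previous stochastic interval and arranging $\sum_n\epsilon_n<\epsilon$, in order to secure genuine monotonicity together with $Z_{T_n}>b-\eta$ on a set of probability close to $\mathbb{P}(T<\infty)$. The remaining bookkeeping is routine: the contributions on $\{T=\infty\}$ cancel on both sides of (A), and the endpoint $t=\infty$ in $^\mathcal{P}Z_t\geq\lsl{Z}_t$ is reached by sequences $S_n\uparrow\infty$ under the conventions $\lsr{Z}{\infty}=Z_\infty$ and $^\Lambda Z_\infty=Z_\infty$ recorded above.
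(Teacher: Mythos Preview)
The paper does not prove this proposition itself; it is quoted from \cite{BS77}. The closest the paper comes are the supplementary results Proposition~\ref{extension_pro_1} and Lemma~\ref{extension_lem_2} (following \cite{DL82}), which establish the related equivalence between (A) and the conditions $Z\geq{^\Lambda(\lsr{Z}{})}$ and ${^\mathcal{P}Z}\geq\lsl{Z}$ up to evanescence. Your (B)$\Rightarrow$(A) is correct and matches that approach.

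There is, however, a genuine gap in your (A)$\Rightarrow$(B) for the right-upper-semicontinuity half. You propose to section $\Gamma=\{\lsr{Z}{}\geq b\}\cap\{Z\leq a\}$ by a stopping time, but $\Gamma$ is only \emph{progressive} (as you yourself observe via Remark~\ref{optstop_rem_3}), and the Meyer Section Theorem~\ref{meyer_thm_2} applies only to $\Lambda$-measurable---here, optional---sets. There is no section theorem producing stopping times inside merely progressive sets, so this first step is unjustified. Your subsequent iterative construction, once a stopping time $T$ is in hand, is fine and is precisely the argument of Proposition~\ref{extension_pro_1}: the key point there is that $\lsr{Z}{T}$ is $\mathcal{F}_T$-measurable (progressivity of $\lsr{Z}{}$), so the process $\lsr{Z}{T}\mathbb{1}_{\stsetO{T}{\infty}}$ is predictable and the sets $\stsetO{T}{T+\frac{1}{n}}\cap\{|Z-\lsr{Z}{T}|<\frac{1}{n}\}$ are genuinely optional. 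The paper runs this construction at \emph{every} stopping time $S$, rather than at a sectioned ``bad'' one, and combines it with (A) localised on $A\in\mathcal{F}_S$ to obtain $Z_S\geq\mathbb{E}[\lsr{Z}{S}\mid\mathcal{F}_S]=\lsr{Z}{S}$ a.s.\ at each $S$. The further passage from ``at every stopping time'' to the \emph{pathwise} statement $\{Z<\lsr{Z}{}\}$ evanescent is exactly the step the paper does not carry out itself (Remark~\ref{extension_rem_3} flags this difficulty in a closely related context) and defers to \cite{BS77}.

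Your predictable half is on firmer ground: both $^\mathcal{P}Z$ and $\lsl{Z}$ are predictable, so the set $\{{^\mathcal{P}Z}\leq a<b\leq\lsl{Z}\}$ is predictable and the predictable section theorem applies legitimately.
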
	
	
	\begin{Rem}
		\cite{BS77} state in the introduction to chapter I, p.301, that we can use processes $Z$ of class$(D)$ in their Theorem II.1, p.305, instead of merely bounded processes
	\end{Rem}
	
	The previous proposition gives us the second optimality result:
	
	\begin{Thm}[\citing{EK81}{Theorem 2.43}{142}]
			\label{optstop_thm_6}
				Let $Z$ be an optional, positive process of class(D).
				\begin{enumerate}[(i)]
					\item 	Assume $Z$ is 
					\emph{upper-semicontinuous in expectation}, i.e. 
							for every monotone (not necessarily strict) 
							sequence $(T_n)_{n\in \mathbb{N}}$ with limit $T$, we have
							\[
									\mathbb{E}[Z_T]\geq \limsup_{n\rightarrow \infty} 
									\mathbb{E}[Z_{T_n}].
							\]					 
							 Then the entry time $\bar{T}_0$ of the set 
							$\{Z=\bar{Z}\}$ is optimal as is the entry time 
							$\bar{T}^0$ of the set 
							$\{\bar{M}\neq \bar{Z}\}$ with $\bar{M}$ from the 
							decomposition
							of $\bar{Z}$ given
							by Proposition~\ref{optstop_pro_3}.
					\item If for every non-increasing sequence of stopping 
					times $(T_n)_{n\in \mathbb{N}}$ with limit $T$ we have
							\[
									\mathbb{E}[Z_T]\geq \limsup_{n\rightarrow \infty}
									\mathbb{E}[Z_{T_n}]
							\]
							and if for every non-decreasing sequence of stopping 
							times $(T_n)_{n\in \mathbb{N}}$ with limit $T$ we have
							\[
									\lim_{n\rightarrow \infty} \sup_{T_n\leq R\, 
									\in \st}\mathbb{E}[Z_R]
									=\sup_{T\leq R\, \in \st}\mathbb{E}[Z_R],
							\]
							then the entry time $\bar{T}^0$ of the set 
							$\{\bar{M}\neq \bar{Z}\}$ with $\bar{M}$ from the decomposition
							of $\bar{Z}$ given
							by Proposition \ref{optstop_pro_3} is optimal.
				\end{enumerate}
	\end{Thm}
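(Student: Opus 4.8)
The plan is to reduce both parts to the pathwise optimality criterion of Theorem~\ref{optstop_thm_5}, by translating each ``in expectation'' hypothesis into the pathwise right-upper-semicontinuity and the predictable-projection identity $\bar{Z}_-={^\mathcal{P}\bar{Z}}$ that Theorem~\ref{optstop_thm_5}(ii) requires, using the Bismut--Skalli characterization of Proposition~\ref{optstop_pro_7} and the envelope identities of Proposition~\ref{optstop_pro_4}. Throughout, $\st=\stm$ in the optional case, so $T_0,T^0$ are genuine stopping times.

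For part (i), I would first apply Proposition~\ref{optstop_pro_7}: upper-semicontinuity in expectation is precisely its two-sided condition, so $Z$ has right-upper-semicontinuous paths, $\lsr{Z}{}\leq Z$, and $\lsl{Z}\leq {^\mathcal{P} Z}$. Since $\bar{Z}\geq Z$, monotonicity of the predictable projection gives ${^\mathcal{P}\bar{Z}}\geq {^\mathcal{P} Z}\geq \lsl{Z}$, and substituting this into the identity $\bar{Z}_-=({^\mathcal{P}\bar{Z}})\vee\lsl{Z}$ of Proposition~\ref{optstop_pro_4} yields $\bar{Z}_-={^\mathcal{P}\bar{Z}}$. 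Together with right-upper-semicontinuity this is the hypothesis of Theorem~\ref{optstop_thm_5}(ii), so $\bar{T}^0=T^0$ is optimal. To treat $\bar{T}_0$, I would collapse the optimal divided stopping time $\delta_0=(T_0,H_0^-,H_0,H_0^+)$ of Theorem~\ref{optstop_thm_3} into the single stopping time $T_0$. Starting from $\mathbb{E}[\bar{Z}_0]=\mathbb{E}[\lsl{Z}_{T_0}\mathbb{1}_{H_0^-}+Z_{T_0}\mathbb{1}_{H_0}+\lsr{Z}{T_0}\mathbb{1}_{H_0^+}]$ (equation \eqref{optstop_eq_2} at $S=0$), I bound the $H_0^+$-term by $\lsr{Z}{T_0}\leq Z_{T_0}$, and the $H_0^-$-term using that $(T_0)_{H_0^-}$ is predictable with $H_0^-\in\mathcal{F}^\Lambda_{T_0-}$, so $\mathbb{E}[\lsl{Z}_{T_0}\mathbb{1}_{H_0^-}]\leq\mathbb{E}[{^\mathcal{P} Z}_{T_0}\mathbb{1}_{H_0^-}]=\mathbb{E}[Z_{T_0}\mathbb{1}_{H_0^-}]$. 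Summing over the partition gives $\mathbb{E}[\bar{Z}_0]\leq\mathbb{E}[Z_{T_0}]$, while $\mathbb{E}[Z_{T_0}]\leq\sup_{T\in\st}\mathbb{E}[Z_T]=\mathbb{E}[\bar{Z}_0]$ holds trivially; hence $T_0$ is optimal. Condition (i) of Theorem~\ref{optstop_thm_2} then forces $Z_{T_0}=\bar{Z}_{T_0}$ a.s., so $\bar{T}_0\leq T_0$, and since $\bar{T}_0\geq T_0$ is immediate from \eqref{Main:140}, we conclude $\bar{T}_0=T_0$ is optimal.

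For part (ii) only the largest optimal time $\bar{T}^0$ is claimed, consistent with the weaker left-hand hypothesis. The first hypothesis is right-upper-semicontinuity in expectation, which gives right-upper-semicontinuous paths by the right-hand half of Proposition~\ref{optstop_pro_7}. The substance is to extract $\bar{Z}_-={^\mathcal{P}\bar{Z}}$ from the second hypothesis. Using $\mathbb{E}[\bar{Z}_S]=\sup_{R\geq S,\,R\in\st}\mathbb{E}[Z_R]$, the second hypothesis states that for every non-decreasing $(T_n)$ with limit $T$ one has $\lim_n\mathbb{E}[\bar{Z}_{T_n}]=\mathbb{E}[\bar{Z}_T]$; since $\bar{Z}$ is l\`adl\`ag of class(D$^\Lambda$), the left side equals $\mathbb{E}[\bar{Z}_{T-}]$, so $\mathbb{E}[\bar{Z}_{T-}]=\mathbb{E}[\bar{Z}_T]=\mathbb{E}[{^\mathcal{P}\bar{Z}}_T]$ for every predictable $T$ announced by such a sequence. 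Because $\bar{Z}_-\geq {^\mathcal{P}\bar{Z}}$ always holds for the supermartingale $\bar{Z}$ (Proposition~\ref{optstop_pro_2}), equality of expectations of these ordered variables forces $\bar{Z}_{T-}={^\mathcal{P}\bar{Z}}_T$ a.s.; ranging over all predictable $T$ and invoking the section theorem gives $\bar{Z}_-={^\mathcal{P}\bar{Z}}$. Theorem~\ref{optstop_thm_5}(ii) then yields optimality of $\bar{T}^0$.

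I expect the main obstacle to be the passage from the integrated hypotheses to the pathwise identity $\bar{Z}_-={^\mathcal{P}\bar{Z}}$: one must identify the value function $S\mapsto\sup_{R\geq S}\mathbb{E}[Z_R]$ with $\mathbb{E}[\bar{Z}_S]$, justify interchanging limits with expectations through the class(D$^\Lambda$) uniform integrability, and upgrade an equality of expectations to an almost-sure identity using the \emph{correct} (supermartingale) direction $\bar{Z}_-\geq {^\mathcal{P}\bar{Z}}$ of Proposition~\ref{optstop_pro_2} together with Meyer's section theorem. A secondary delicate point is the collapse of $\delta_0$ in part (i), where the predictable leg $H_0^-$ and the right-limit leg $H_0^+$ must be absorbed into $T_0$ by invoking the two halves ${^\mathcal{P} Z}\geq\lsl{Z}$ and $\lsr{Z}{}\leq Z$ of upper-semicontinuity in expectation separately.
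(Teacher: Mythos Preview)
Your proposal is correct and for part~(i) takes essentially the same route as the paper. The paper makes explicit in Remark~\ref{optstop_rem_8} the gap you also identified---that Proposition~\ref{optstop_pro_7} only yields $\lsl{Z}\leq {^\mathcal{P} Z}$, not $\lsl{Z}\leq Z$---and closes it via Proposition~\ref{extension_pro_2}, whose proof is precisely your collapse of $\delta_0$: derive $\bar{Z}_-={^\mathcal{P}\bar{Z}}$ from $\lsl{Z}\leq {^\mathcal{P} Z}\leq {^\mathcal{P}\bar{Z}}$ and Proposition~\ref{optstop_pro_4}, then absorb the $H_0^-$-leg using predictability of $(T_0)_{H_0^-}$ and the $H_0^+$-leg using pathwise right-upper-semicontinuity, concluding $\bar{T}_0=T_0$. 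Your treatment of $\bar{T}^0$ in~(i) via Theorem~\ref{optstop_thm_5}(ii) once $\bar{Z}_-={^\mathcal{P}\bar{Z}}$ is established is the intended reduction.

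For part~(ii) the paper does not spell out a proof (it is cited from \cite{EK81}), but your argument is the natural one and is sound: the identification $\mathbb{E}[\bar{Z}_S]=\sup_{R\geq S}\mathbb{E}[Z_R]$ turns the second hypothesis into $\lim_n\mathbb{E}[\bar{Z}_{T_n}]=\mathbb{E}[\bar{Z}_T]$, an announcing sequence for a predictable $T$ together with class($D^\Lambda$) and the l\`adl\`ag property of $\bar{Z}$ gives $\mathbb{E}[\bar{Z}_{T-}]=\mathbb{E}[\bar{Z}_T]=\mathbb{E}[{^\mathcal{P}\bar{Z}}_T]$, and the pointwise order $\bar{Z}_-\geq {^\mathcal{P}\bar{Z}}$ from Proposition~\ref{optstop_pro_2} upgrades this to $\bar{Z}_-={^\mathcal{P}\bar{Z}}$ via the section theorem. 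The one-sided use of Proposition~\ref{optstop_pro_7} to extract pathwise right-upper-semicontinuity from the non-increasing hypothesis alone is legitimate in the optional case; see Remark~\ref{extension_rem_2}(iii).
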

	
	\begin{Rem}\label{optstop_rem_8}
			In the original article of \cite{EK81} it is claimed that part (i)
			of the previous theorem follows directly by
			the equivalence in Proposition \ref{optstop_pro_7}
			and the statement in Theorem \ref{optstop_thm_5}. But 
			as one can see we do not get upper-semi-continuity from the 
			left of the process $Z$, but just of the process
			$^\mathcal{P} Z$. These two processes are not the same in general and
			hence the proof is incomplete. This gap will be closed by 
			Proposition \ref{extension_pro_2} in the next section.		
	\end{Rem}

%%%%%%%%%%%%%%%%%%%%%%%%%%%%%%%%%%%%%%%%%%%%%%%%%%%%%%%%%%%%%%%%%%%%%%
%%%%%%%%%%%%%%%%%%%%%%%%%%%%%%%%%%%%%%%%%%%%%%%%%%%%%%%%%%%%%%%%%%%%%%

\section{Supplementary results to Lenglart's and to El Karoui's theory}\label{sec:extension}
		
		In this section we will prove some additional results which are not given in
		\cite{EK81}, but which we find useful. Throughout this 
		section we will fix a
		filtered probability space $(\Omega,\mathbb{F}, 
		\mathcal{F}:=(\mathcal{F}_t)_{t\geq 0},\mathbb{P})$
		and a Meyer-$\sigma$-field $\Lambda\subset 
		\mathbb{F}\otimes \mathcal{B}([0,\infty))$
		with $\mathcal{F}_{\infty}:=\bigvee_{t} \mathcal{F}_t\subset
		\mathbb{F}$,
		$\mathcal{F}_{0-}$ some $\WM$-complete $\sigma$-field contained in $\mathcal{F}_0$		and $\mathcal{F}$ is satisfying the usual conditions of completeness and 
		right-continuity.

	%%%%%%%%%%%%%%%%%%%%%%%%%%%%%%%%%%%%%%%%%%%%%%%%%%%%%%%%%%%%%%%%%%%%%%
	%%%%%%%%%%%%%%%%%%%%%%%%%%%%%%%%%%%%%%%%%%%%%%%%%%%%%%%%%%%%%%%%%%%%%%

	\subsection{Special case of an embedded Meyer-$\sigma$-field}
	\label{extension_ssec_1}
		
		\begin{Lem}\label{extension_lem_1}
			Assume that the given Meyer-$\sigma$-field $\Lambda$ is embedded
			in the sense that:
			\begin{align}\label{extension_eq_1}
				\mathcal{P}(\mathcal{F})\subset \Lambda\subset
			\mathcal{O}(\mathcal{F}),
			\end{align}
			where $\mathcal{P}(\mathcal{F})$ and
			$\mathcal{O}(\mathcal{F})$ denote, respectively, 
			the predictable and the
			optional $\sigma$-field associated with $\mathcal{F}$.			
			
			Then we have
			$\mathcal{F}_{t-}=\mathcal{F}^\Lambda_{t-}$ for $t> 0$,
			$\mathcal{F}^\Lambda_{t+}=\mathcal{F}_t$ for $t\geq 0$,
			and $\Lambda$ is a $\mathbb{P}$-complete 
			Meyer-$\sigma$-field. In particular, this gives us 
			$\mathcal{F}_{t-}\subset \mathcal{F}^\Lambda_t
			\subset \mathcal{F}_t$, $t > 0$,
			and $\mathcal{P}(\mathcal{F})=\mathcal{P}(\mathcal{F}^\Lambda_+)$,
			$\mathcal{O}(\mathcal{F})=\mathcal{O}(\mathcal{F}^\Lambda_+)$.		
		\end{Lem}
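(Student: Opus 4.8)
The plan is to extract everything from the two containments $\mathcal{P}(\mathcal{F})\subset\Lambda\subset\mathcal{O}(\mathcal{F})$ by translating them into relations between the time-$t$ generating $\sigma$-fields, and then to read off the filtration identities by taking suitable unions and intersections. First I would exploit the right-hand inclusion: every $\Lambda$-measurable process is $\mathcal{F}$-optional, hence $\mathcal{F}$-adapted, so its value at $t$ is $\mathcal{F}_t$-measurable; since $\mathcal{F}^\Lambda_t$ is by definition generated exactly by these values, this yields $\mathcal{F}^\Lambda_t\subset\mathcal{F}_t$ for all $t$. For the opposite direction I would use the left-hand inclusion together with left-continuous test processes: fixing $s<t$ and $A\in\mathcal{F}_s$, the process $u\mapsto\mathbb{1}_A\,\mathbb{1}_{(s,\infty)}(u)$ is left-continuous and $\mathcal{F}$-adapted, hence $\mathcal{F}$-predictable, hence $\Lambda$-measurable because $\mathcal{P}(\mathcal{F})\subset\Lambda$. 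Its value at $t$ equals $\mathbb{1}_A$, so $A\in\mathcal{F}^\Lambda_t$, giving $\mathcal{F}_s\subset\mathcal{F}^\Lambda_t$ whenever $s<t$.

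With these two containments in hand the filtration identities follow mechanically. For the left-limit $\sigma$-fields I would first record that, since a deterministic time $t$ is a $\Lambda$-stopping time (its graph equals $\Omega\times[t,\infty)\in\Lambda$ by Definition~\ref{meyer_def_1}(ii)), Remark~\ref{meyer_rem_1} describes $\mathcal{F}^\Lambda_{t-}$ as generated by $\mathcal{F}^\Lambda_{0-}$ and the sets $A\cap\{s<t\}$ with $A\in\mathcal{F}^\Lambda_s$; for a constant $t$ these collapse to $A$ (if $s<t$) or $\emptyset$, so $\mathcal{F}^\Lambda_{t-}=\bigvee_{s<t}\mathcal{F}^\Lambda_s$ for $t>0$. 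Then $\mathcal{F}^\Lambda_{t-}=\bigvee_{s<t}\mathcal{F}^\Lambda_s\subset\bigvee_{s<t}\mathcal{F}_s=\mathcal{F}_{t-}$, while for $r<t$ choosing $r<s<t$ gives $\mathcal{F}_r\subset\mathcal{F}^\Lambda_s\subset\mathcal{F}^\Lambda_{t-}$, so $\mathcal{F}_{t-}\subset\mathcal{F}^\Lambda_{t-}$; hence equality. For the right-limit $\sigma$-fields, $\mathcal{F}^\Lambda_{t+}=\bigcap_{s>t}\mathcal{F}^\Lambda_s\subset\bigcap_{s>t}\mathcal{F}_s=\mathcal{F}_{t+}=\mathcal{F}_t$ by right-continuity of $\mathcal{F}$, and conversely $\mathcal{F}_t\subset\mathcal{F}^\Lambda_s$ for each $s>t$ forces $\mathcal{F}_t\subset\mathcal{F}^\Lambda_{t+}$; hence $\mathcal{F}^\Lambda_{t+}=\mathcal{F}_t$.

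The $\mathbb{P}$-completeness of $\Lambda$ is then immediate from Theorem~\ref{meyer_thm_4}: $\Lambda$ is generated by c\`adl\`ag processes and lies between the predictable and optional $\sigma$-fields of the filtration $\mathcal{F}$, which satisfies the usual conditions. The ``in particular'' statements drop out: $\mathcal{F}_{t-}=\mathcal{F}^\Lambda_{t-}\subset\mathcal{F}^\Lambda_t\subset\mathcal{F}_t$ combines the first identity, the definitional inclusion $\mathcal{F}^\Lambda_{t-}\subset\mathcal{F}^\Lambda_t$, and the containment from the first paragraph; and since $\mathcal{F}^\Lambda_{t+}=\mathcal{F}_t$ says the right-continuous filtration $\mathcal{F}^\Lambda_+$ equals $\mathcal{F}$, the equalities $\mathcal{P}(\mathcal{F})=\mathcal{P}(\mathcal{F}^\Lambda_+)$ and $\mathcal{O}(\mathcal{F})=\mathcal{O}(\mathcal{F}^\Lambda_+)$ follow because these $\sigma$-fields depend only on the underlying filtration.

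I expect the main obstacle to be the careful justification of $\mathcal{F}^\Lambda_{t-}=\bigvee_{s<t}\mathcal{F}^\Lambda_s$, which requires correctly specializing the description of $\mathcal{F}^\Lambda_{S-}$ in Remark~\ref{meyer_rem_1} to the deterministic $\mathcal{F}^\Lambda_+$-stopping time $S\equiv t$. A secondary point of care is the initial $\sigma$-field: for the predictable $\sigma$-fields to agree exactly at time $0$ one must take the free choice $\mathcal{F}^\Lambda_{0-}:=\mathcal{F}_{0-}$, which is consistent with the standing assumption that $\mathcal{F}_{0-}$ is $\mathbb{P}$-complete and contained in $\mathcal{F}_0$.
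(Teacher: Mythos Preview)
Your proof is correct and follows essentially the same approach as the paper's: establish the sandwich $\mathcal{F}_{t-}\subset\mathcal{F}^\Lambda_t\subset\mathcal{F}_t$ (you do this by producing explicit predictable and optional test processes, the paper by invoking the abstract identifications $\mathcal{F}_t^{\mathcal{P}(\mathcal{F})}=\mathcal{F}_{t-}$ and $\mathcal{F}_t^{\mathcal{O}(\mathcal{F})}=\mathcal{F}_t$ from Example~\ref{meyer_ex_2} and Remark~\ref{meyer_rem_1}), then take unions and intersections over $s<t$ and $s>t$ to obtain the left- and right-limit identities, and conclude $\mathbb{P}$-completeness from Theorem~\ref{meyer_thm_4}. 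The only cosmetic difference is that you unpack $\mathcal{F}^\Lambda_{t-}=\bigvee_{s<t}\mathcal{F}^\Lambda_s$ via Remark~\ref{meyer_rem_1} explicitly, whereas the paper uses this identity tacitly.
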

		
		\begin{proof}
				First of all, we have by Theorem \ref{meyer_thm_4}
				that $\Lambda$ is $\mathbb{P}$-complete by
				the properties of $\mathcal{F}$. Next we get
				by Example \ref{meyer_ex_2} and Remark \ref{meyer_rem_1} that
				\begin{align}\label{extension_eq_12}
					\mathcal{F}_{t-}
					={\mathcal{F}_t^{\mathcal{P}(\mathcal{F})}}\subset\mathcal{F}^\Lambda_t\subset{\mathcal{F}_t^{\mathcal{O}(\mathcal{F})}}= 
					\mathcal{F}_t
				\end{align}
				for $t> 0$.			
				Furthermore, we get for $t>0$ by \eqref{extension_eq_12}
				that 
				\begin{align*}
					\mathcal{F}_{t-}&=\sigma\left(\bigcup_{s\in [0,\infty),\, s<t}
					\mathcal{F}_s\right)
					=\sigma\left(\bigcup_{s\in [0,\infty),\, s<t}
					\mathcal{F}_{(s+\frac{t-s}{2})-}\right)\\
					&\overset{\eqref{extension_eq_12}}{\subset}
					\sigma\left(\bigcup_{s\in [0,\infty),\, s<t}
					\aFA_{(s+\frac{t-s}{2})}\right)
				\subset \mathcal{F}^\Lambda_{t-}
				\end{align*}
				and on the other hand we have
				\[
					\mathcal{F}^\Lambda_{t-}
					=\sigma\left(\bigcup_{s\in [0,\infty),\, s<t}
					\mathcal{F}_s^\Lambda\right)
					\subset \sigma\left(\bigcup_{s\in [0,\infty),\,s<t}
					\mathcal{F}_s\right)
					=\mathcal{F}_{t-}.
				\]
				Combining the previous two results proves $\mathcal{F}^\Lambda_{t-}=\mathcal{F}_{t-}$
				for $t> 0$.
				Additionally we see that for $t\geq 0$
				\[
					\mathcal{F}^\Lambda_{t+}
					=\bigcap_{r\in [0,\infty),\,t<r}\mathcal{F}_r^\Lambda 
					\subset \bigcap_{r\in [0,\infty),\,t<r} \mathcal{F}_r=
					\mathcal{F}_{t+}=\mathcal{F}_t
				\]
				and
				\begin{align*}
					\mathcal{F}_{t}
					\subset \bigcap_{r\in [0,\infty),\,t<r} \sigma\left(
					\bigcup_{s\in [0,\infty),\, s<r}\mathcal{F}_s\right)
					=\bigcap_{r\in [0,\infty),\,t<r} \mathcal{F}_{r-}
					\subset\bigcap_{r\in [0,\infty),\,t<r} \mathcal{F}^\Lambda_r
					=\mathcal{F}^\Lambda_{t+},
				\end{align*}
				which implies $\mathcal{F}_{t+}^\Lambda=\mathcal{F}_t$.
				As we have proven that the filtration $(\mathcal{F}_t)_{t\geq 0}$
				and $(\mathcal{F}^\Lambda_{t+})_{t\geq 0}$ are the same,
				we see that $\mathcal{O}(\mathcal{F})$ and 
				$\mathcal{O}(\mathcal{F}^\Lambda_+)$ 
				(resp. $\mathcal{P}(\mathcal{F})$ and 
				$\mathcal{P}(\mathcal{F}^\Lambda_+)$)
				are generated by the same processes, which shows in particular that they are
				the same.
		\end{proof}		
		
	%%%%%%%%%%%%%%%%%%%%%%%%%%%%%%%%%%%%%%%%%%%%%%%%%%%%%%%%%%%%%%%%%%%%%%
	%%%%%%%%%%%%%%%%%%%%%%%%%%%%%%%%%%%%%%%%%%%%%%%%%%%%%%%%%%%%%%%%%%%%%%

	\subsection{Approximating the limes superior}\label{extension_ssec_2}

    It is sometimes convenient to know that the $\limsup$ from the right or from the left of a stochastic process at a stopping time can be realized by a suitable sequence of larger or smaller stopping times converging to it. Such a result was proven 
	by \citing{DL82}{Lemma 1}{300}, 
	for the case of optional processes. We extend their argument to cover $\Lambda$-measurable processes:

	\begin{Pro}\label{extension_pro_1}
		Assume that $\Lambda$ is embedded in the
		sense of \eqref{extension_eq_1}.
		Let $Z$ be a $\Lambda$-measurable
		process with $Z_{\infty}=0$ and denote 
		by $\lsr{Z}{}$ and, respectively, $\lsl{Z}$ 
		the right- and the left-upper-semicontinuous envelope
		of $Z$,
		which are defined in \eqref{optstop_eq_5} and \eqref{optstop_eq_6}.
			Now we have the following two results:
			\begin{enumerate}[(i)]
				\item For every given $\mathcal{F}$-stopping time $T$, 
				there exists
							a non-increasing sequence $(T_n)_{n\in \mathbb{N}}
							\subset \stm$
						with $T_n\geq T$, $\infty>T_n>T$ on $\{T<\infty\}$ and
						$\lim_{n					\rightarrow \infty}T_n=T$ such that
							$\lsr{Z}{T}=\lim_{n\rightarrow \infty} 
							Z_{T_n}$
								almost surely.
				\item For any predictable $\mathcal{F}$-stopping time $T>0$,
							there exists a sequence of 
							$\Lambda$-stopping times 
							$(T_n)_{n\in \mathbb{N}}$ with
						$T_n< T$ and
						$\lim_{n \rightarrow \infty}T_n=T$ such that
							$\lsl{Z}_{T}=\lim_{n\rightarrow \infty} Z_{T_n}$ 
							almost surely.
			\end{enumerate}
		\end{Pro}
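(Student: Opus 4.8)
The plan is to reduce both statements to a single selection step performed with the Meyer Section Theorem (Theorem \ref{meyer_thm_2}) and to upgrade the resulting one-shot estimates to an almost sure limit by a squeezing argument together with Borel--Cantelli. The guiding observation is that if $T<T_n<T+\frac1n$, then $Z_{T_n}\le\sup_{s\in(T,T+\frac1n)}Z_s\downarrow\lsr{Z}{T}$, so $\limsup_n Z_{T_n}\le\lsr{Z}{T}$ holds automatically; the entire difficulty is therefore to produce $\Lambda$-stopping times arbitrarily close to $T$ along which $Z$ comes arbitrarily close to $\lsr{Z}{T}$ \emph{from below}, so that the matching $\liminf$-bound holds as well. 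Throughout I will use that, for any $\mathcal F$-stopping time $T$, the stochastic interval $\stsetO{T}{\infty}$ is predictable and hence lies in $\mathcal P(\mathcal F)\subset\Lambda$ by \eqref{extension_eq_1}, and that any stopping time lying strictly between two stopping times (e.g.\ $T+\frac1m$, or a convex combination) is predictable and therefore a $\Lambda$-stopping time.

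For part (i) I would fix $m\in\NZ$ and, given a $\Lambda$-stopping time $T_{m-1}>T$ already constructed, set $R_m:=(T+\frac1m)\wedge T_{m-1}$, again a $\Lambda$-stopping time with $R_m>T$ on $\{T<\infty\}$. Writing $\ell:=\lsr{Z}{T}$, which is $\mathcal F_T$-measurable because $\lsr{Z}{}$ is $\mathcal F^\Lambda$-progressive by Remark \ref{optstop_rem_3} and $\mathcal F^\Lambda_{T+}=\mathcal F_T$ by Lemma \ref{extension_lem_1}, I would consider
\[
	A:=\big\{Z>\ell-\tfrac1m\big\}\cap\stsetO{T}{\infty}\cap\stsetRO{0}{R_m}.
\]
This set is $\Lambda$-measurable: on $\stsetO{T}{\infty}$ the level process $\ell\,\mathbb{1}_{\stsetO{T}{\infty}}$ is predictable (an $\mathcal F_T$-measurable coefficient times the predictable interval $\stsetO{T}{\infty}$), while $\stsetRO{0}{R_m}=\Omega\times[0,\infty)\setminus\stsetRO{R_m}{\infty}$ is $\Lambda$-measurable because $R_m\in\stm$. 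Since every right-neighbourhood of $T$ has $\sup_{s\in(T,R_m)}Z_s\ge\lsr{Z}{T}=\ell$, the projection $\pi(A)$ contains $\{T<\infty\}$, so Theorem \ref{meyer_thm_2} yields $S_m\in\stm$ with $\stsetG{S_m}\subset A$ and $\WM(S_m<\infty)>\WM(T<\infty)-2^{-m}$; on $\{S_m<\infty\}$ one has $T<S_m<R_m\le T_{m-1}$ and $Z_{S_m}>\ell-\frac1m$. Setting $T_m:=S_m$ on $\{S_m<T_{m-1}\}$ and $T_m:=\frac12(T+T_{m-1})$ on the (small) remaining set keeps $T_m$ a $\Lambda$-stopping time with $T<T_m<T_{m-1}$, so $(T_m)$ is non-increasing, strictly larger than $T$, and converges to $T$; Borel--Cantelli applied to the summable exceptional sets gives $\liminf_m Z_{T_m}\ge\ell$ almost surely, which with the automatic upper bound yields $Z_{T_m}\to\lsr{Z}{T}$.

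The subtle point, and the main obstacle, is part (ii): here the natural level is $\lsl{Z}_T$, which by Remark \ref{optstop_rem_3} and Lemma \ref{extension_lem_1} is only $\mathcal F_{T-}$-measurable, and an $\mathcal F_{T-}$-measurable coefficient multiplied by an interval ending at $T$ is \emph{not} predictable (it anticipates at times strictly before $T$). Hence the set $\{Z>\lsl{Z}_T-\frac1m\}$ restricted to a left-neighbourhood of $T$ need not lie in $\Lambda$, and the argument of part (i) cannot be transcribed verbatim. I would circumvent this by replacing the random level with rational constants: fixing an announcing sequence $T^{(m)}\uparrow T$, $T^{(m)}<T$, of the predictable time $T$ and a rational $q$, the set $\{Z>q\}\cap\stsetO{T^{(m)}}{T}$ is genuinely $\Lambda$-measurable (no level process is involved; it is the intersection of the $\Lambda$-measurable $\{Z>q\}$ with the predictable intervals $\stsetO{T^{(m)}}{\infty}$ and $\stsetRO{0}{T}$, the latter predictable since $T$ is), and since $\sup_{s\in(T^{(m)},T)}Z_s\ge\lsl{Z}_T$ its projection contains $\{\lsl{Z}_T>q\}$. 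Applying Theorem \ref{meyer_thm_2} on this set for each pair $(q,m)$ produces $\Lambda$-stopping times in $\stsetO{T^{(m)}}{T}$ along which $Z>q$ off a small set; a diagonalisation over rationals $q\uparrow\lsl{Z}_T$ and over $m$ (selecting on the $\mathcal F_{T-}$-measurable events $\{\lsl{Z}_T>q\}$, which enter only through the section theorem) assembles $T_n<T$, $T_n\uparrow T$, with $\liminf_n Z_{T_n}\ge\lsl{Z}_T$, while $\limsup_n Z_{T_n}\le\lsl{Z}_T$ follows as before from $Z_{T_n}\le\sup_{s\in(T^{(m_n)},T)}Z_s\downarrow\lsl{Z}_T$. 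The remaining bookkeeping---forcing monotonicity of $T_n$ by nesting the left endpoints as $T^{(m)}\vee T_{n-1}$, and disposing of the exceptional sets by Borel--Cantelli---is routine, and the hypothesis $Z_\infty=0$ together with the usual conditions on $\mathcal F$ is what lets us treat the cases $T=\infty$ and the behaviour at infinity uniformly.
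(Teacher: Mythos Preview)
Your part~(i) follows the paper's line (an adaptation of Dellacherie--Lenglart), and the one-sided section set $\{Z>\ell-\tfrac1m\}$ combined with the automatic upper bound is a legitimate variant of the paper's two-sided $\{|Z-\lsr{Z}{T}|<\tfrac1n\}$. There is, however, a slip in your patching: the random time $\tfrac12(T+T_{m-1})$ is in general not even an $\FA$-stopping time (take $T\equiv 0$; then $\{\tfrac12 T_{m-1}\le t\}=\{T_{m-1}\le 2t\}\in\FA_{2t}$, which need not lie in $\FA_t$), so your assertion that convex combinations of stopping times are predictable is false. The paper sidesteps this entirely by setting $T_m:=\min(S_m,T_{m-1},T+1)$, which is a $\Lambda$-stopping time and still satisfies $T_m>T$ on $\{T<\infty\}$ because $S_m>T$ whenever $S_m<\infty$.

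For part~(ii) you correctly pinpoint the obstruction---$\lsl{Z}_T$ is only $\FA_{T-}$-measurable, and the process $\lsl{Z}_T\,\mathbb{1}_{\stsetRO{0}{T}}$ is not adapted---but your rational-level fix has a genuine gap. After sectioning $\{Z>q\}\cap\stsetO{T^{(m)}}{T}$ for each pair $(q,m)$ you hold a doubly indexed family $S_{q,m}\in\stm$ with $Z_{S_{q,m}}>q$ on $\{S_{q,m}<\infty\}$. To assemble a single sequence $(T_n)$ with $\liminf_n Z_{T_n}\ge\lsl{Z}_T$ you must, for each $\omega$, pick rationals $q\uparrow\lsl{Z}_T(\omega)$; that selection depends on $\omega$ through $\lsl{Z}_T$, and this information is not available in $\aFA_S$ for any $\Lambda$-stopping time $S<T$. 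Gluing the $S_{q,m}$ along the events $\{\lsl{Z}_T>q\}$ would therefore destroy the $\Lambda$-stopping-time property of the assembled sequence. This is not routine bookkeeping; it is exactly the anticipation problem you set out to avoid, reappearing at the assembly stage.

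The paper resolves this differently. After reducing to bounded $Z$, it replaces the anticipating level $\lsl{Z}_T$ by the $\Lambda$-projection ${}^\Lambda(\lsl{Z}_T)$ of the constant-in-time process with value $\lsl{Z}_T$. This projection is $\Lambda$-measurable by construction and is a $\Lambda$-martingale, hence l\`adl\`ag by Proposition~\ref{optstop_pro_2}; since $\lsl{Z}_T\in\FA_{T-}$, its left limit at the predictable time $T$ equals $\lsl{Z}_T$ itself. One then sections the genuinely $\Lambda$-measurable set $\stsetO{\tilde T_n}{T}\cap\{|Z-{}^\Lambda(\lsl{Z}_T)|<\tfrac1n\}$, whose projection has full probability because both $Z$ and ${}^\Lambda(\lsl{Z}_T)$ approach $\lsl{Z}_T$ along suitable sequences from the left of $T$. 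The resulting $S_n$ satisfy $Z_{S_n}\to\lsl{Z}_T$ directly, and no diagonalisation over levels is needed.
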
		
			
			\begin{Rem}\label{extension_rem_1}
						\begin{enumerate}[(i)]
							\item Result (ii) of Proposition \ref{extension_pro_1}
									cannot be generalized to an 
									$\mathcal{F}$-stopping time $T$
									in the way it is possible for result~(i), because the sequence 
									$(T_n)_{n\in \mathbb{N}}$ is an announcing
									sequence, which would directly imply that 
									$T$ is predictable.
							\item Obviously one can get analogously to Proposition
							\ref{extension_pro_1} the same result for the right- and 
							left-lower-semicontinuous envelope of a given 
							$\Lambda$-measurable
							process $Z$ by using Proposition \ref{extension_pro_1}
							for $-Z$.
						\end{enumerate}
			\end{Rem}

	\begin{proof}
	\emph{Proof of (i):} 
		We will prove this result by adapting the proof of \citing{DL82}{Lemma 1}{300}. Assume without loss of generality that 
		$\mathbb{P}(T<\infty)>0$, because for $T=\infty$ a.s. we could 
		set $T_n=\infty$ as $Z_\infty=0=Z^\ast_\infty$.
		Next we see that $\lsr{Z}{}$ is $\mathcal{F}$-progressively
		measurable by \citing{DM82}{Theorem 90}{143}, and, 
		therefore, $\lsr{Z}{T}\in \mathcal{F}_{T}$ by \citing{DM82}{Theorem 64 (b)}{122}.
		 Furthermore, we can assume that
		$Z$  and $\lsr{Z}{T}$ are bounded by replacing $Z$ with
		$\frac{Z}{1+|Z|}$. 
		 Now we set $S_1:=\infty$ and we define
		inductively $S_n$, $n=2,3,\dots$, as a $\Lambda$-stopping time
		from the Meyer-Section Theorem
		\ref{meyer_thm_2}, which we apply for $\epsilon_n:=2^{-n}$ and
		\begin{align}\label{extension_eq_11}
			B_n:=\stsetO{T}{\infty}\cap \stsetRO{0}{\min\left(T+\frac1n,
			S_{n-1}\right)}\cap \left\{\left|  Z-Z^\ast_T\right|
			<\frac1n\right\}.
		\end{align}
		We just have to prove $B_n\in \Lambda$ which we will do below. Granted $B_n\in \Lambda$,
		we can define $T_1:=T+1$ and 
		$T_n:=\min(S_n,T_{n-1},T+1)$, $n=2,3,\dots$, and 
		the sequence $(T_n)_{n\in \mathbb{N}}\subset \stm$
		will satisfy the
		desired properties as $\pi(B_n)=\{T<\infty\}$.
		Indeed, a Borel-Cantelli argument applied to $\{S_n=\infty\}\cap\{T<\infty\}$, $n\in \NZ$,
		shows that for $\WM$-almost every
		$\omega\in \{T<\infty\}$ there exists $N_\omega$ such that for $n\geq N_\omega$ we have $S_n(\omega)<\infty$
		for $n\geq N_\omega$. In particular, $T(\omega)
		<S_n(\omega)<T(\omega)+\frac1n$,
		$|Z_{S_n}(\omega)-Z_{T}^\ast(\omega)|<\frac1n$ eventually. For $\omega \in \{T=\infty\}$,
		we get that $\omega\notin \pi(B_n)$ for $n\in \mathbb{N}$ and
		hence $S_n(\omega)=\infty$ for all $n\in \mathbb{N}$.
				
		\emph{Proof of $B_n\in \Lambda$:}
		We will argue that each of the three
		sets in the specification \eqref{extension_eq_11}  of $B_n$
		is contained in $\Lambda$. 
		First, we have $\stsetO{T}{\infty}\in \Lambda$ by \citing{EL80}{Theorem 2.1) and Corollary 1.1)}{503-504}, as
		$T$ is an $\aFA_+$-stopping time by \eqref{extension_eq_1}. Next, we see that for any $n\in \NZ$
		the stopping time $T+\frac1n$ is a predictable
		$\mathcal{F}$-stopping time, which implies by 
		$\mathcal{P}(\mathcal{F})\subset \Lambda$ that
		it is a $\Lambda$-stopping time. Analogously, $S_1$ is a $\Lambda$-stopping time. 
		Hence, for any $n\in \NZ$ we have $\min(T+\frac1n,S_{n-1})\in \stm$ and,
		as $\Lambda$ is a $\sigma$-field, we get
		\[
			\stsetRO{0}{\min\left(T+\frac1n,S_{n-1}\right)}\in \Lambda.
		\]		
		Finally, we see that $\lsr{Z}{T}\mathbb{1}_{\stsetO{T}{\infty}}$is 
		$\mathcal{F}$-predictable, because it is left-continuous and 
		$\mathcal{F}$-adapted.
		Hence, again by \eqref{extension_eq_1} it is a $\Lambda$-measurable
		process. As $Z$ is $\Lambda$-measurable also
		$Z\mathbb{1}_{\stsetO{T}{\infty}}$ is
		a $\Lambda$-measurable process.
		 Therefore, $(Z-\lsr{Z}{T})
		\mathbb{1}_{\stsetO{T}{\infty}}$ is $\Lambda$-measurable
		and, as a consequence,
		\[
			\stsetO{T}{\infty}\cap\left\{\left|Z
			-Z^\ast_T\right|<\frac1n\right\}\in \Lambda
		\]
		for every $n\in \mathbb{N}$.
				
		\emph{Proof of (ii):} We assume w.l.o.g. that $T(\omega)>0$ for all $\omega\in \Omega$ as we could replace $T$ by $T_{\{T>0\}}$ with $T=T_{\{T>0\}}$ almost surely.
		For $T$ there exists an announcing sequence
		$(\tilde{T}_n)_{n\in \mathbb{N}}$ by \citing{DM82}{Theorem 77}{132}, i.e
		a sequence of predictable $\mathcal{F}$-stopping times 
		$(\tilde{T}_n)_{n\in \mathbb{N}}$
		with $\tilde{T}_n(\omega)<T(\omega)$ and $\lim_{n\rightarrow \infty} \tilde{T}_n(\omega)=T(\omega)$ for all $\omega \in \Omega$. 
		Now we adapt the proof of $(i)$ with
		\[
			\tilde{B}_n:=\stsetO{\tilde{T}_n}{T}
			\cap \left\{\left|Z-{^\Lambda{(^\ast Z_T)}}\right|<\frac1n\right\},
		\]
		which satisfies $\WM(\pi(\tilde{B}_n))=1$,
		to obtain again a sequence of $\Lambda$-stopping times $S_n$ by the Meyer Section
		Theorem and the desired sequence is given by 
		$T_n:=\inf_{k\geq n} S_k$. Indeed, again a Borel-Cantelli
		argument applied to $\{S_n=\infty\}$, $n\in \NZ$, shows that for almost every
		$\omega\in \Omega$ there exists $N_\omega$ such that
		for $n\geq N_\omega$ we have $S_n(\omega)<\infty$.
		In particular $\tilde{T}_n(\omega)<S_n(\omega)<T(\omega)$ and
		$|Z_{S_n}(\omega)-{^\Lambda{(^\ast Z_T)}}(\omega)|
		<\frac1n$ for $n\geq N_\omega$.
		As $(\tilde{T}_n)_{n\in \mathbb{N}}$ is 
		non-decreasing to $T$, we see that 
		$T_n(\omega)=\inf_{k\geq n}S_k(\omega)$ is actually a minimum over finitely many
		elements. Hence, $T_n$ inherits the desired properties from $S_k, k\geq n$.
	\end{proof}
		
	%%%%%%%%%%%%%%%%%%%%%%%%%%%%%%%%%%%%%%%%%%%%%%%%%%%%%%%%%%%%%%%%%%%%%%
	%%%%%%%%%%%%%%%%%%%%%%%%%%%%%%%%%%%%%%%%%%%%%%%%%%%%%%%%%%%%%%%%%%%%%%	

	\subsection{Path-properties of a process under the assumption 
	of upper-semicontinuity in expectation}	\label{extension_ssec_3}
		
		In this subsection we will extend Theorem II.1 of
		\cite{BS77} to the case of $\Lambda$-measurable processes.
				
		\begin{Lem}\label{extension_lem_2}
			Assume that $\Lambda$ is embedded in the
		sense of \eqref{extension_eq_1}. Let $Z$ be a
		$\Lambda$-measurable process of class($\text{D}^\Lambda$)
			with $Z_{\infty}=0$.
			%, which is bounded from below by a $\Lambda$-martingale. 
			Then the following holds:
			\begin{enumerate}[(i)]
				\item For any $S\in \stm$, the following assertions are equivalent:
					\begin{enumerate}
						\item $Z$ is right-upper-semicontinuous in expectation
								in $S$ in the sense that for 
								any non-increasing sequence $(S_n)_{n\in \mathbb{N}}\subset
								 \stm$ with $S_n\geq S$, $\infty>S_n>S$ on 
								$\{S<\infty \}$, $\lim_{n\rightarrow \infty}S_n=S$,
								$\lim_{n\rightarrow \infty} Z_{S_n}=Z^\ast_{S}$ and for any
								$A\in\mathcal{F}^\Lambda_S$ we have
								\[
									\mathbb{E}[Z_{S_A}]\geq \lim_{n\rightarrow \infty}
									\mathbb{E}[Z_{(S_n)_A}].
								\]
						\item $Z_S\geq {^\Lambda (\lsr{Z}{})_S}$ almost surely. 
					\end{enumerate}
					In particular, $Z$ is right-upper-semicontinuous in expectation in 
					all $S\in \stm$ if and only if 
					the set $\{Z<{^\Lambda (\lsr{Z}{})}\}$ is evanescent.					
				\item  For $S\in \stp$, the following assertions are equivalent:
					\begin{enumerate}
						\item For any non-decreasing sequence 
									$(S_n)_{n\in \mathbb{N}}
									\subset {\stm}$ 
									with $S_n< S$ on $\{S>0\}$, 
									$\lim_{n\rightarrow \infty} S_n=S$, 
									we have
									\begin{align}\label{extension_eq_2}
											\mathbb{E}\left[Z_{S}\right]
											\geq \limsup_{n\rightarrow \infty}
													\mathbb{E}\left[Z_{S_n}\right].
									\end{align}						
						\item For any non-decreasing sequence 
									$(S_n)_{n\in \mathbb{N}}
									\subset {\stm}$ 
									with $S_n< S$ on $\{S>0\}$, 
									$\lim_{n\rightarrow \infty} S_n=S$, 
									$\lim_{n\rightarrow
									\infty} Z_{S_n}={^\ast Z}_S$ and 				
									$A\in \bigcup_{m\in \mathbb{N}} \mathcal{F}_{S_m}^\Lambda$
									we have
									\begin{align}\label{extension_eq_3}
											\mathbb{E}\left[Z_{S}\mathbb{1}_{A}\right]
											\geq \lim_{n\rightarrow \infty}
													\mathbb{E}\left[Z_{S_n}\mathbb{1}_{A}\right].
									\end{align}						
						\item ${^\mathcal{P} Z_S}\geq \lsl{Z}_S$ almost surely.
					\end{enumerate}
					If the process $Z$
				satisfies one of the three equivalent conditions then we call $Z$
				left-upper-semicontinuous in expectation at $S$.
				
				In particular, the process $Z$ is left-upper-semicontinuous
				in expectation at every 
					$S\in \stp$ if and only if 
					the set $\{^\mathcal{P} Z< {^\ast Z}\}$ is evanescent and we have  
					${^\ast Z_\infty\leq {^\mathcal{P} Z}_\infty=0}$.
			\end{enumerate}
		\end{Lem}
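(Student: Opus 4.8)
The plan is to reduce every one of these equivalences to an inequality between $\mathbb{E}[Z_S\mathbb{1}_A]$ and $\mathbb{E}[(\text{envelope})_S\mathbb{1}_A]$ for suitable test sets $A$, and then to recognise the conditional expectations $\mathbb{E}[\,\cdot\,|\mathcal{F}^\Lambda_S]$ and $\mathbb{E}[\,\cdot\,|\mathcal{F}_{S-}]$ as the $\Lambda$-projection and the $\mathcal{F}^\Lambda$-predictable projection. The three inputs I will lean on are: Proposition \ref{extension_pro_1}, which produces monotone sequences of $\Lambda$-stopping times realising $\lsr{Z}{S}$ from the right and $\lsl{Z}_S$ from the left; the identity ${^\Lambda Z}_S=\mathbb{E}[Z_S|\mathcal{F}^\Lambda_S]$ of Theorem \ref{meyer_thm_3}, together with its predictable analogue ${^\mathcal{P} Z}_S=\mathbb{E}[Z_S|\mathcal{F}_{S-}]$ (using $\mathcal{F}^\Lambda_{S-}=\mathcal{F}_{S-}$ from Lemma \ref{extension_lem_1}); and the class($D^\Lambda$) hypothesis, which legitimises dominated convergence and reverse Fatou along the approximating sequences. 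Throughout I use $Z_\infty=0$, so that $Z_{S_A}=Z_S\mathbb{1}_A$ for $A\in\mathcal{F}^\Lambda_S$ (as $S_A=\infty$ off $A$), and the non-negativity of $Z$ that is standing in the optimal-stopping setting.

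For part (i) I will prove (a) $\Leftrightarrow$ (b) directly. Given a non-increasing sequence $(S_n)$ as in (a) and $A\in\mathcal{F}^\Lambda_S$, one has $S_A,(S_n)_A\in\stm$ (Remark \ref{meyer_rem_1}, together with $\mathcal{F}^\Lambda_S\subset\mathcal{F}^\Lambda_{S_n}$), and $\mathbb{E}[Z_{S_A}]=\mathbb{E}[Z_S\mathbb{1}_A]$, $\mathbb{E}[Z_{(S_n)_A}]=\mathbb{E}[Z_{S_n}\mathbb{1}_A]$. Choosing the sequence of Proposition \ref{extension_pro_1}(i) so that $Z_{S_n}\to\lsr{Z}{S}$ and passing to the limit by dominated convergence turns (a) into $\mathbb{E}[Z_S\mathbb{1}_A]\ge\mathbb{E}[\lsr{Z}{S}\mathbb{1}_A]$ for every $A\in\mathcal{F}^\Lambda_S$; since $Z_S$ is $\mathcal{F}^\Lambda_S$-measurable this is exactly $Z_S\ge\mathbb{E}[\lsr{Z}{S}|\mathcal{F}^\Lambda_S]={^\Lambda (\lsr{Z}{})_S}$, i.e. (b). The converse reverses these steps, the point being that (b) controls $\mathbb{E}[Z_S\mathbb{1}_A]$ against $\mathbb{E}[{^\Lambda (\lsr{Z}{})_S}\mathbb{1}_A]$ for all $A\in\mathcal{F}^\Lambda_S$, hence against $\lim_n\mathbb{E}[Z_{(S_n)_A}]$ for every admissible sequence. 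The final ``in particular'' follows by quantifying over all bounded $S\in\stm$ and invoking Corollary \ref{meyer_cor_1} to upgrade the pointwise inequalities to evanescence of $\{Z<{^\Lambda (\lsr{Z}{})}\}$.

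For part (ii) the clean core is (b) $\Leftrightarrow$ (c) and (c) $\Rightarrow$ (a). For (c) $\Rightarrow$ (a) I note that any non-decreasing $S_n\uparrow S$ with $S_n<S$ obeys the pathwise bound $\limsup_n Z_{S_n}\le\lsl{Z}_S$, so reverse Fatou (class($D^\Lambda$)) and (c) give $\limsup_n\mathbb{E}[Z_{S_n}]\le\mathbb{E}[\lsl{Z}_S]\le\mathbb{E}[{^\mathcal{P} Z}_S]=\mathbb{E}[Z_S]$. For (b) $\Leftrightarrow$ (c) I use the sequence of Proposition \ref{extension_pro_1}(ii), which realises $\lsl{Z}_S$ and, being squeezed between an announcing sequence of $S$ and $S$ itself, satisfies $\sigma(\bigcup_m\mathcal{F}^\Lambda_{S_m})=\mathcal{F}_{S-}$. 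Dominated convergence rewrites (b) as $\mathbb{E}[Z_S\mathbb{1}_A]\ge\mathbb{E}[\lsl{Z}_S\mathbb{1}_A]$ for $A$ in the generating algebra $\bigcup_m\mathcal{F}^\Lambda_{S_m}$; since $\lsl{Z}_S$ is $\mathcal{F}_{S-}$-measurable (Remark \ref{optstop_rem_3}), a Dynkin argument extends this to all $A\in\mathcal{F}_{S-}$ and identifies it with ${^\mathcal{P} Z}_S=\mathbb{E}[Z_S|\mathcal{F}_{S-}]\ge\lsl{Z}_S$, i.e. (c); the reverse implication is immediate from the same rewriting.

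The main obstacle is the passage from the purely integrated statement (a) to the localised statements (b), (c): a single scalar inequality $\mathbb{E}[Z_S]\ge\mathbb{E}[\lsl{Z}_S]$ cannot on its own yield an almost-sure inequality, so localisation is unavoidable. I will localise at the \emph{predictable} times $S_A$: for $A\in\mathcal{F}_{S-}$ the time $S_A$ lies in $\stp$, and feeding into the upper-semicontinuity-in-expectation property the sequence of Proposition \ref{extension_pro_1}(ii) for $S_A$ (which realises $\lsl{Z}_S$ on $A$ and $\lsl{Z}_\infty$ on $A^c$, where the approximating times escape to $+\infty$) yields $\mathbb{E}[Z_S\mathbb{1}_A]\ge\mathbb{E}[\lsl{Z}_S\mathbb{1}_A]+\mathbb{E}[\lsl{Z}_\infty\mathbb{1}_{A^c}]$. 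Here non-negativity of $Z$ is essential: it makes the term at infinity non-negative, so it may be dropped to leave exactly $\mathbb{E}[Z_S\mathbb{1}_A]\ge\mathbb{E}[\lsl{Z}_S\mathbb{1}_A]$, hence (c). The delicate technical points inside this step are the measurable construction of the glued $\Lambda$-stopping times localised on $A$ (clean for $A\in\bigcup_m\mathcal{F}^\Lambda_{S_m}$, which is precisely why (b) is phrased with these sets) and the bookkeeping of the limit at $+\infty$; the latter is what forces the separate condition $\lsl{Z}_\infty\le{^\mathcal{P} Z}_\infty=0$ in the global ``in particular'' statement, whose pointwise content over all $S\in\stp$ is then recovered from the $S$-wise inequalities by Corollary \ref{meyer_cor_1}.
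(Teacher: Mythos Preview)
Your treatment of part (i) and of the implications (c) $\Rightarrow$ (a) and (c) $\Rightarrow$ (b) in part (ii) is essentially the paper's argument. The problems lie in how you close the cycle in part (ii).

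\medskip

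\textbf{The Dynkin step in (b) $\Rightarrow$ (c) fails.} From (b) you obtain $\mathbb{E}[Z_S\mathbb{1}_A]\ge\mathbb{E}[\lsl{Z}_S\mathbb{1}_A]$ for $A\in\bigcup_m\mathcal{F}^\Lambda_{S_m}$ and then invoke a ``Dynkin argument'' to extend to all $A\in\mathcal{F}_{S-}$. But the collection of sets $A$ satisfying an inequality of the form $\mathbb{E}[X\mathbb{1}_A]\ge\mathbb{E}[Y\mathbb{1}_A]$ is not a $\lambda$-system: it is not closed under complementation. The paper avoids this by reading the inequality as $\mathbb{E}[Z_S\,|\,\mathcal{F}^\Lambda_{S_m}]\ge\mathbb{E}[\lsl{Z}_S\,|\,\mathcal{F}^\Lambda_{S_m}]$ for each $m$ and then passing to the limit via L\'evy's martingale convergence theorem (\citing{DM82}{Theorem 31}{26}), which does preserve the inequality and yields $\mathbb{E}[Z_S\,|\,\mathcal{F}_{S-}]\ge\lsl{Z}_S$.

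\medskip

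\textbf{The route (a) $\Rightarrow$ (c) is where the real gap sits.} You localise at the predictable time $S_A$ for $A\in\mathcal{F}_{S-}$ and feed the sequence from Proposition~\ref{extension_pro_1}(ii) applied to $S_A$ into ``the upper-semicontinuity-in-expectation property''. But condition (a) is a hypothesis at the fixed time $S$, not at $S_A$; and the sequence you build converges to $\infty$ on $A^c$, so it does not satisfy the hypotheses of (a) at $S$ either. Your displayed inequality $\mathbb{E}[Z_S\mathbb{1}_A]\ge\mathbb{E}[\lsl{Z}_S\mathbb{1}_A]+\mathbb{E}[\lsl{Z}_\infty\mathbb{1}_{A^c}]$ therefore has no source. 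Relatedly, the non-negativity of $Z$ that you invoke to discard the term at infinity is \emph{not} assumed in the lemma.

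The paper's resolution is precisely why condition (b) is phrased with $A\in\bigcup_m\mathcal{F}^\Lambda_{S_m}$: for such $A$ one can take the given sequence $(S_n)$, shift it to $\tilde S_n:=S_{n+m}$ so that $A\in\mathcal{F}^\Lambda_{\tilde S_n}$, and then build a sequence in $\stm$ that equals $\tilde S_n$ on $A$ and still announces $S$ on $A^c$. Applying (a) at $S$ to this sequence and subtracting the $A^c$ contribution isolates the localised inequality of (b). The restriction to finitely-known $A$ is essential here; the passage from (b) to (c) is then handled by the martingale-convergence step above, not by a monotone-class or Dynkin argument.
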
		
		
		\begin{Rem}[Right-upper-semicontinuity in expectation]
		\label{extension_rem_2}		
			\begin{enumerate}[(i)]
				\item For the class of optional processes, \cite{BS77}, p.306,
							have shown that even
							${^\mathcal{O} (\lsr{Z}{})_S}\geq \lsr{Z}{S}$ (see also Proposition \ref{extension_pro_3}). The analogous statement does not hold  true for general $\Lambda$-measurable process. Pathwise upper-semicontinuity of $\Lambda$-measurable processes thus has to be ensured by further assumptions (see \citing{Bes19}{Example 2.11}{47}).
				\item If $Z$ is right-upper-semicontinuous in expectation in 
						\emph{all} $S\in \stm$,
						the initial definition of upper-semicontinuity
						in expectation becomes
						easier to state. In fact, it amounts to the requirement
						that for all $S\in \stm$ we want that 
						for any non-increasing sequence $S_n\in 
						\stm$ with $S_n\geq S$, $S_n>S$
						on $\{S<\infty\}$, $\lim_{n\rightarrow \infty}S_n=S$, 
						$\lim_{n\rightarrow \infty} Z_{S_n}=Z^\ast_{S}$
					we have
					\[
						\mathbb{E}[Z_{S}]\geq \lim_{n\rightarrow \infty}
						\mathbb{E}[Z_{S_n}].
					\] 
					This follows by the fact that if $A\in \mathcal{F}^\Lambda_S
					\subset \mathcal{F}^\Lambda_{S_n}$ 
					also $S_A,(S_n)_A\in \stm$ and 
					$\lim_{n\rightarrow \infty}(S_n)_A=S_A$.
					\item The classical definition of right-upper-semicontinuity given by
					 \cite{DL82}, p.303, implies
					our definition and in the case of optional processes they
					are equivalent. Indeed, for optional
					processes \citing{DL82}{Remark 7 b)}{303}, implies that their  definition of 
					right-upper-semicontinuity implies $Z_S\geq {Z^\ast_S}$. Hence
					\begin{align*}
						Z_S=Z_S\mathbb{1}_{\{S<\infty\}}=\mathbb{E}\left[Z_S
						\mathbb{1}_{\{S<\infty\}}\middle|\mathcal{F}_S\right]
						&\geq \mathbb{E}\left[{Z^\ast_S}\mathbb{1}_{\{S<\infty\}}
						\middle|\mathcal{F}_S\right]\\
						&={^\mathcal{O} (Z^\ast)_S}\mathbb{1}_{\{S<\infty\}}
						={^\mathcal{O} (Z^\ast)_S},
					\end{align*}
					and this is equivalent to our definition of 
					right-upper-semicontinuity. 
					\item Analogously to Remark \ref{extension_rem_1} (ii), Lemma 
					\ref{extension_lem_2} also characterizes right- and
					left-lower-semicontinuity.
				\end{enumerate}
		\end{Rem}		
		
		\begin{proof}[Proof of Lemma \ref{extension_lem_2}]
			The proof of (i) is 
			mainly the proof in \citing{DL82}{Theorem 6}{303},
			and the proof of (ii) will be accomplished by adapting the same argument to
			the setting of predictable stopping times.
			
			\emph{Part (i):} a) ``$\Rightarrow$'' b):
			Assume $Z$ is right-upper-semicontinuous in expectation in $S$.
			By Proposition \ref{extension_pro_1} (i), there exists 
			 a non-increasing sequence $(S_n)_{n\in \mathbb{N}}\subset
			\stm$ such that $S_n\geq S$, 
			$\infty>S_n>S$ on $\{S<\infty\}$,
			$\lim_{n\rightarrow \infty} S_n=S$  and $\lim_{n\rightarrow \infty} 
			Z_{S_n}=\lsr{Z}{S}$ almost surely.		
			Let $A\in \mathcal{F}^\Lambda_S$. Then we get by $\lsr{Z}{\infty}=
			Z_{\infty}=0$ and because $Z$ is of class($D^\Lambda$) that
			\begin{align*}
				\mathbb{E}\left[Z_{S}\mathbb{1}_{\{S<\infty\}}\mathbb{1}_A\right]
				=\mathbb{E}\left[Z_{S_A}\right]
						&\geq
						 \lim_{n\rightarrow \infty} 
						\mathbb{E}\left[Z_{(S_n)_A}\right]\\
						&= \mathbb{E}\left[ \lim_{n\rightarrow \infty} 
						Z_{(S_n)_A}\right]
						= \mathbb{E}\left[ \lsr{Z}{S_A}\right]
						= \mathbb{E}\left[ \lsr{Z}{S}\mathbb{1}_{\{S<\infty\}}
						\mathbb{1}_A\right].
			\end{align*} 
			As $A\in \mathcal{F}^\Lambda_S$ is arbitrary this shows 
			that $Z_S\mathbb{1}_{\{S<\infty\}}\geq \mathbb{E}
			\left[Z_S^\ast\mathbb{1}_{\{S<\infty\}}\middle | 
			\mathcal{F}^\Lambda_S\right]$. Hence 
			\[
				Z_S\mathbb{1}_{\{S<\infty\}}\geq \mathbb{E}
			\left[Z_S^\ast\mathbb{1}_{\{S<\infty\}}\middle | 
			\mathcal{F}^\Lambda_S\right]
			={^\Lambda (Z^\ast)_S}\mathbb{1}_{\{S<\infty\}}
			\]
			and the rest follows by $Z_\infty={^\Lambda (Z^\ast)_\infty}=0$.
			
			b) ``$\Rightarrow$'' a): Assume now that we have ${Z_S}\geq {^\Lambda (Z^\ast)}_S$ 
			and let 
			 $(S_n)_{n\in \mathbb{N}}\subset
			\stm$ be a sequence 
							such that $S_n\geq S$, 
			$\infty>S_n>S$ on $\{S<\infty\}$,
			$\lim_{n\rightarrow \infty} S_n=S$  and $\lim_{n\rightarrow \infty} 
			Z_{S_n}=\lsr{Z}{S}$ almost surely. Then we obtain for 
			$A\in \mathcal{F}_{S}^\Lambda$
							that
							\begin{align*}
			\mathbb{E}\left[Z_{S_A}\right]=
			\mathbb{E}\left[Z_{S}\mathbb{1}_{\{S<\infty\}}\mathbb{1}_A\right]
			&\geq \mathbb{E}\left[{^\Lambda (Z^\ast)}_S
			\mathbb{1}_{\{S<\infty\}}\mathbb{1}_A\right]
			=  \mathbb{E}\left[Z^\ast_S\mathbb{1}_A\right]\\
						&= \mathbb{E}\left[ \lim_{n\rightarrow \infty} 
						Z_{(S_n)_A}\right]
						=\lim_{n\rightarrow \infty} \mathbb{E}\left[Z_{(S_n)_A}
						\right],
			\end{align*} 
			which finishes the proof of (i).
			
			\emph{Part (ii):} We can assume $S>0$ as we
			could replace $S$ by $S_{\{S>0\}}$ because ${^\mathcal{P} Z_0}=Z_0=\lsl{Z}_0$.
			
			\emph{a)\ $\Rightarrow$ b)}:
					Assume we have a non-decreasing sequence 
					$(S_n)_{n\in \mathbb{N}}
					\subset {\stm}$ 
					with $S_n< S$, 
					$\lim_{n\rightarrow \infty} S_n=S$, 
					$\lim_{n\rightarrow
					\infty} Z_{S_n}={^\ast Z}_S$ and 				
					$A\in \mathcal{F}_{S_m}^\Lambda$ for some $m\in \NZ$.
					Define $\tilde{S}_{n}:=S_{n+m}$ and observe that
					because $A\in \mathcal{F}_{\tilde{S}_n}^\Lambda$ also
					$((\tilde{S}_n)_A)_{n\in \mathbb{N}}\subset 
					\stm$ and this sequence satisfies the conditions
					of (a). Hence
					\[
						\mathbb{E}\left[Z_{S}\mathbb{1}_A\right]
						\geq \lim_{n\rightarrow \infty} 
						\mathbb{E}\left[Z_{\tilde{S}_n}\mathbb{1}_A\right]
						=\lim_{n\rightarrow \infty} 
						\mathbb{E}\left[Z_{S_n}\mathbb{1}_A\right],
					\]
					which we wanted to show.
			
			\emph{b)\ $\Rightarrow$ c):}
			By Proposition \ref{extension_pro_1} (ii),  there exists
			 a sequence $(S_n)_{n\in \mathbb{N}}\subset
			\stm$,
							$S_n< S$,
						$\lim_{n \rightarrow \infty}S_n=S$ and
							$\lsl{Z}_{S}=\lim_{n\rightarrow \infty} Z_{S_n}$ 
							almost surely.
			 Hence we get for $A\in \mathcal{F}^\Lambda_{S_m}$, 
			$m\in \mathbb{N}$,
			that
			\begin{align*}
				\mathbb{E}\left[Z_{S}\mathbb{1}_{\{S<\infty\}}\mathbb{1}_A
				\right]&=\mathbb{E}\left[Z_{S_A}\right]
						\geq \lim_{n\rightarrow \infty} \mathbb{E}
						\left[Z_{(S_n)_A}\right]\\
						&= \mathbb{E}\left[ \lim_{n\rightarrow \infty} 
						Z_{(S_n)_A}\right]
						= \mathbb{E}\left[ \lsl{Z}_{S_A}\right]
						= \mathbb{E}\left[ \lsl{Z}_{S}\mathbb{1}_A\right].
			\end{align*} 
			Since $A\in \mathcal{F}_{S_m}$ is arbitrary, this implies 
			that 
			\[
				\mathbb{E}
			\left[Z_S \mathbb{1}_{\{S<\infty\}}\middle | 
			\mathcal{F}_{S_m}\right]\geq 
			\mathbb{E}
			\left[{^\ast Z_S}\middle | 
			\mathcal{F}_{S_m}\right]
			\]
			 for all $m\in \mathbb{N}$. Thus, \citing{DM82}{Theorem 31}{26}, allows us to conclude for $m\rightarrow \infty$ that
			\begin{align*}
			\mathbb{E}
			\left[Z_S \mathbb{1}_{\{S<\infty\}}\middle | 
			\mathcal{F}_{S-}\right]&=\lim_{m\rightarrow \infty}
			\mathbb{E}
			\left[Z_S \mathbb{1}_{\{S<\infty\}}\middle | 
			\mathcal{F}_{S_m}\right]\\
			&\geq 
			\lim_{m\rightarrow \infty} \mathbb{E}
			\left[{^\ast Z_S}\middle | 
			\mathcal{F}_{S_m}\right]
			=\mathbb{E}
			\left[{^\ast Z_S}\middle | 
			\mathcal{F}_{S-}\right]={^\ast Z_S}
			\end{align*}
			 and, due to ${^\mathcal{P} Z_\infty}=0$, we get
			\[
				{^\mathcal{P} Z_S}={^\mathcal{P} Z_S}\mathbb{1}_{\{S<\infty\}}= \mathbb{E}\left[Z_S 
				\mathbb{1}_{\{S<\infty\}}\middle | 
			\mathcal{F}_{S-}\right]\geq 
			{^\ast Z_S},
			\]
			which we wanted to show.
			
			\emph{c)\ $\Rightarrow$ a):}
			Assume now that ${^\mathcal{P} Z_S}\geq {^\ast Z}_S$ holds
			and let 
			 $(S_n)_{n\in \mathbb{N}}\subset
			\stm$ be a non-decreasing 
			sequence such that
			$S_n< S$,
			$\lim_{n \rightarrow \infty}S_n=S$.
			Then we have 
			\begin{align*}
				\mathbb{E}\left[Z_{S}\right]=
				\mathbb{E}\left[Z_{S}\mathbb{1}_{\{S<\infty\}}\right]
				&=\mathbb{E}\left[{^\mathcal{P} Z}_S\mathbb{1}_{\{S<\infty\}}\right]
				=\mathbb{E}\left[{^\mathcal{P} Z}_S\right]\\
				&\geq  \mathbb{E}\left[ \lsl{Z}_{S}\right]
					\geq \mathbb{E}\left[ \limsup_{n\rightarrow\infty} 
														Z_{S_n}\right]\geq 
				\limsup_{n\rightarrow\infty} 	\mathbb{E}\left[ 
														Z_{S_n}\right],
			\end{align*} 
			which finishes our proof.							
		\end{proof}	
			
		In the next proposition we see that under some regularity
		conditions we can get more information
		about the sets $H_S^-,H_S$ and $H_S^+$ of Proposition
		\ref{optstop_pro_5} and we can indeed close 
		the small gap in El Karoui's proof of Theorem \ref{optstop_thm_6} (see Remark 
		\ref{optstop_rem_8}).
					
		\begin{Pro}\label{extension_pro_2}
			We use the notation from Proposition \ref{optstop_pro_4} and
			\ref{optstop_pro_5}. Let $Z$ be a positive
			$\Lambda$-measurable process of class($D^\Lambda$), which is 
			left-upper-semicontinuous in expectation at every
			$S\in \stp$ (see Lemma \ref{extension_lem_2} (ii)).
			Then we have for any fixed $S\in \stm$, that $H_S^-\in \mathcal{F}^\Lambda_{T_S-}$ and
			\[
				H_S^-\subset \{\bar{Z}_{T_S}=Z_{T_S}\}
			\]
			up to a $\mathbb{P}$-nullset. In particular, we get, up to $\mathbb{P}$-nullsets, that
			\[
				H_S^-\cup H_S=\{\bar{Z}_{T_S}=Z_{T_S}\},
				\quad H_S^+=\{\bar{Z}_{T_S}<Z_{T_S}\}.
			\]
			In the optional case $\Lambda=\mathcal{O}$ and if in addition
			 $Z$ is right-upper-semicontinuous in expectation at all stopping
			times (see Lemma \ref{extension_lem_2} (i)), then the stopping time
			\[
				\bar{T}_S:=\inf\left\{t\geq S\ \middle\vert\  
				\bar{Z}_t=Z_t\right\}.
			\]
			satisfies 
			\[
				Z_{\bar{T}_S}=\bar{Z}_{\bar{T}_S}\quad
				\text{ and } \quad
				\mathbb{E}[Z_{\bar{T}_S}|\aFA_S]=\bar{Z}_S.
			\]
			In particular, $\bar{T}_0$ is optimal for \eqref{optstop_eq_4}.
		\end{Pro}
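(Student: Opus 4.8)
The plan is to establish the two set identities first and then to reduce the optional case to them by showing that the ``$+$''--branch of the divided stopping time $\delta_S$ is negligible, so that $\delta_S$ collapses to the ordinary stopping time $\bar T_S=T_S$. The membership $H_S^-\in\mathcal{F}^\Lambda_{T_S-}$ requires nothing new, as it is already part of Proposition \ref{optstop_pro_5}. For the inclusion $H_S^-\subset\{\bar Z_{T_S}=Z_{T_S}\}$ I would exploit that $R:=(T_S)_{H_S^-}$ is a \emph{predictable} $\mathcal{F}^\Lambda_+$-stopping time and run a squeeze: on $H_S^-$ Proposition \ref{optstop_pro_5} gives $\bar Z_{T_S-}=\lsl{Z}_{T_S}$, while the supermartingale inequality $\bar Z_-\ge{}^{\mathcal P}\bar Z$ (Proposition \ref{optstop_pro_2}), the order $\bar Z\ge Z$ (from Corollary \ref{meyer_cor_1}, whence ${}^{\mathcal P}\bar Z\ge{}^{\mathcal P}Z$ by monotonicity of the predictable projection), and the left-upper-semicontinuity-in-expectation hypothesis in the form ${}^{\mathcal P}Z_R\ge\lsl{Z}_R$ of Lemma \ref{extension_lem_2}(ii) yield on $H_S^-$ the chain $\lsl{Z}_{T_S}=\bar Z_{T_S-}\ge{}^{\mathcal P}\bar Z_{T_S}\ge{}^{\mathcal P}Z_{T_S}\ge\lsl{Z}_{T_S}$. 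Every inequality is thus an equality, so ${}^{\mathcal P}\bar Z_{T_S}={}^{\mathcal P}Z_{T_S}$ on $H_S^-$; since $H_S^-\in\mathcal{F}^\Lambda_{T_S-}=\mathcal{F}_{R-}$ and $\bar Z_{T_S}-Z_{T_S}\ge0$, integrating $\mathbb{E}[(\bar Z_R-Z_R)\mathbb{1}_{H_S^-}]=\mathbb{E}[({}^{\mathcal P}\bar Z_R-{}^{\mathcal P}Z_R)\mathbb{1}_{H_S^-}]=0$ forces $\bar Z_{T_S}=Z_{T_S}$ a.s.\ on $H_S^-$.

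The two ``in particular'' identities then follow by set algebra. The sets $H_S^-,H_S,H_S^+$ partition $\Omega$; both $H_S^-$ (just shown) and $H_S$ (by Proposition \ref{optstop_pro_5}) lie in $\{\bar Z_{T_S}=Z_{T_S}\}$, and conversely on $(H_S^-)^c$ the defining trichotomy reduces to the sign of $\bar Z_{T_S}-Z_{T_S}$, so $H_S^-\cup H_S=\{\bar Z_{T_S}=Z_{T_S}\}$ while $H_S^+$ is exactly the set where the (a.s.\ valid) domination $\bar Z_{T_S}\ge Z_{T_S}$ is strict.

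For the optional case I would show $H_S^+$ is a nullset. Here the right-upper-semicontinuity-in-expectation assumption upgrades to the \emph{pathwise} bound $Z\ge\lsr{Z}{}$: Lemma \ref{extension_lem_2}(i) gives $Z\ge{}^{\mathcal O}(\lsr{Z}{})$, and ${}^{\mathcal O}(\lsr{Z}{})\ge\lsr{Z}{}$ by the estimate from \cite{BS77} recalled in Remark \ref{extension_rem_2}(i). On $H_S^+$ Proposition \ref{optstop_pro_5} gives $\bar Z_{T_S+}=\lsr{Z}{T_S}\le Z_{T_S}<\bar Z_{T_S}$. On the other hand $R':=(T_S)_{H_S^+}$ is an $\mathcal{F}^\Lambda_+$-stopping time, i.e.\ in the optional case with right-continuous filtration a classical stopping time at which the right-limit is $\mathcal{F}_{R'}$-measurable, so ${}^{\mathcal O}(\bar Z_+)_{R'}=\bar Z_{R'+}$. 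Feeding this into $\bar Z={}^{\mathcal O}(\bar Z_+)\vee Z$ of Proposition \ref{optstop_pro_4}, the strict inequality $\bar Z_{T_S}>Z_{T_S}$ forces $\bar Z_{T_S}={}^{\mathcal O}(\bar Z_+)_{T_S}=\bar Z_{T_S+}$, contradicting $\bar Z_{T_S+}<\bar Z_{T_S}$; hence $H_S^+$ is negligible. Consequently $\bar Z_{T_S}=Z_{T_S}$ a.s., which both identifies the entry times $T_S=\bar T_S$ (we always have $T_S\le\bar T_S$, and now $T_S$ lands in $\{\bar Z=Z\}$) and exhibits $\bar T_S$ as a genuine stopping time, being $(T_S)_{H_S^-}\wedge(T_S)_{H_S}$ up to a nullset with both factors classical stopping times. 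This already gives $Z_{\bar T_S}=\bar Z_{\bar T_S}$.

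It remains to pass from the divided value to $\mathbb{E}[Z_{\bar T_S}\mid\mathcal{F}_S]=\bar Z_S$. Starting from \eqref{Main:230}, with $H_S^+$ negligible it reads $\bar Z_S=\mathbb{E}[\lsl{Z}_{T_S}\mathbb{1}_{H_S^-}+Z_{T_S}\mathbb{1}_{H_S}\mid\mathcal{F}^\Lambda_S]$; on $H_S^-$ I replace the left-limit by the value at $T_S$, using $\lsl{Z}_{T_S}=\bar Z_{T_S-}={}^{\mathcal P}\bar Z_{T_S}=\mathbb{E}[\bar Z_{T_S}\mid\mathcal{F}_{R-}]$ with $R=(T_S)_{H_S^-}$ predictable and $S\le R$, so that the tower property turns $\mathbb{E}[\lsl{Z}_{T_S}\mathbb{1}_{H_S^-}\mid\mathcal{F}_S]$ into $\mathbb{E}[\bar Z_{T_S}\mathbb{1}_{H_S^-}\mid\mathcal{F}_S]=\mathbb{E}[Z_{T_S}\mathbb{1}_{H_S^-}\mid\mathcal{F}_S]$. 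Since $H_S^-\cup H_S=\Omega$ a.s., this collapses to $\bar Z_S=\mathbb{E}[Z_{T_S}\mid\mathcal{F}_S]=\mathbb{E}[Z_{\bar T_S}\mid\mathcal{F}_S]$; taking $S=0$ and expectations gives $\mathbb{E}[Z_{\bar T_0}]=\mathbb{E}[\bar Z_0]=\sup_{T\in\st}\mathbb{E}[Z_T]$, the optimality of $\bar T_0$ for \eqref{optstop_eq_4}. I expect the two delicate points to be the squeeze of the first paragraph (the sole place the left-semicontinuity hypothesis enters) and, in the optional part, the identification of the pathwise right-limit $\bar Z_{T_S+}$ with ${}^{\mathcal O}(\bar Z_+)_{T_S}$ at the merely-$\mathcal{F}^\Lambda_+$ time $T_S$; it is precisely this identification that makes $\bar Z={}^{\mathcal O}(\bar Z_+)\vee Z$ usable to kill $H_S^+$, and it is where $\Lambda=\mathcal{O}$ rather than a general Meyer-$\sigma$-field is essential, as anticipated in Remark \ref{optstop_rem_6}.
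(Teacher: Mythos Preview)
Your argument for the first part—the squeeze at the predictable time $R=(T_S)_{H_S^-}$ yielding $\bar Z_{T_S}=Z_{T_S}$ on $H_S^-$—is essentially the paper's own proof, just phrased pointwise rather than as a chain of expectations; the ``in particular'' set identities are handled identically.

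For the optional case, however, you take a genuinely different route. The paper never shows that $H_S^+$ is a nullset. Instead it bounds $\lsr{Z}{T_S}\leq Z_{T_S}$ (pathwise right-upper-semicontinuity from Proposition~\ref{optstop_pro_7}) and runs a \emph{sandwich}:
\[
\bar Z_S \;\overset{\eqref{Main:230}}{=}\;\mathbb{E}[\,\cdot\,\mid\mathcal{F}_S]
\;\leq\;\mathbb{E}[Z_{T_S}\mid\mathcal{F}_S]
\;\leq\;\mathbb{E}[\bar Z_{T_S}\mid\mathcal{F}_S]
\;\leq\;\bar Z_S,
\]
where the closing inequality is the $\Lambda$-supermartingale property of $\bar Z$ at the (now genuine) stopping time $T_S$. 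Equality throughout then gives both $Z_{T_S}=\bar Z_{T_S}$ and $\mathbb{E}[Z_{T_S}\mid\mathcal{F}_S]=\bar Z_S$ in one stroke. Your approach instead exploits that in the optional case $\bar Z_+$ is itself optional, so ${}^{\mathcal O}(\bar Z_+)=\bar Z_+$ and Proposition~\ref{optstop_pro_4} becomes $\bar Z=\bar Z_+\vee Z$; on $H_S^+$ this forces $\bar Z_{T_S}=\bar Z_{T_S+}$, contradicting $\bar Z_{T_S+}=\lsr{Z}{T_S}\leq Z_{T_S}<\bar Z_{T_S}$, whence $H_S^+$ is null. This is clean and gives a slightly stronger intermediate conclusion than the paper states. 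What the paper's route buys is that it never needs to identify ${}^{\mathcal O}(\bar Z_+)$ with $\bar Z_+$, and the supermartingale inequality closes the sandwich without a separate tower-property computation. Your final step—turning $\mathbb{E}[\lsl{Z}_{T_S}\mathbb{1}_{H_S^-}\mid\mathcal{F}_S]$ into $\mathbb{E}[Z_{T_S}\mathbb{1}_{H_S^-}\mid\mathcal{F}_S]$ via the predictable projection and tower property—does require $\mathcal{F}_S\subset\mathcal{F}_{R-}$; this holds because $S<R$ on $\{R<\infty\}$ (indeed $T_S^\lambda<T_S$ on $H_S^-$ forces $S\leq T_S^\lambda<T_S$), but you should make that explicit rather than write only ``$S\leq R$''. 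The paper glosses over the same point when it says ``by repeating the arguments in~\eqref{extension_eq_4}'' to obtain the conditional version, so neither proof is more rigorous than the other here.
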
		
		
		\begin{proof}		
			First we get by Proposition 
			\ref{optstop_pro_5} that $R:=(T_S)_{H_S^-}$
			is a predictable $\mathcal{F}^\Lambda_+$-stopping time and,
			hence, by Lemma \ref{extension_lem_1}, it is a 
			predictable $\mathcal{F}$-stopping time and a $\Lambda$-stopping time. 
			 Furthermore, 
			  \citing{DM78}{Theorem 56}{118}, yields again with Lemma \ref{extension_lem_1}
			\[
				H_S^-=\bigcap_{n=1}^\infty 
				 \left\{T_S^{1-\frac1n}
				<T_S\right\}\in 
				\mathcal{F}_{T_S-}=\mathcal{F}_{T_S-}^\Lambda.
			\]
			Now we want to show that
				$H_S^-\subset \{\bar{Z}_{T_S}=Z_{T_S}\}$ for $S\in \stm$.
			By Lemma \ref{extension_lem_2} (ii),
			we know $^\mathcal{P} Z\geq \lsl{Z}$
			and, by \citing{DM82}{Remark (a)}{104},
			we have $^\mathcal{P} Z\leq {^\mathcal{P} \bar{Z}}$ since $Z\leq \bar{Z}$. Hence, 
			$\lsl{Z}\leq {^\mathcal{P} Z}\leq {^\mathcal{P} \bar{Z}}$ and,
			by Proposition \ref{optstop_pro_4}, this yields 
			$\bar{Z}_-=(^\mathcal{P} \bar{Z})\vee \lsl{Z}={^\mathcal{P} \bar{Z}}$.
			Combining this with the predictability of $R$ and
			 $\bar{Z}_{R-}=\lsl{Z}_R$ (Proposition \ref{optstop_pro_5}) 
			 gives us
			\begin{align}\label{extension_eq_4}
			\mathbb{E}\left[\bar{Z}_R\mathbb{1}_{\{R<\infty\}}\right]
				&\nonumber
				=\mathbb{E}\left[\mathbb{E}\left[\bar{Z}_R
				\mathbb{1}_{\{R<\infty\}}\ \middle\vert\  
				\mathcal{F}_{R-}\right]\right]
				=	\mathbb{E}\left[^\mathcal{P} \bar{Z}_R
				\mathbb{1}_{\{R<\infty\}}\right]\\
				&=	\mathbb{E}\left[\bar{Z}_{R-}
				\mathbb{1}_{\{R<\infty\}}\right]\nonumber
				=\mathbb{E}\left[\lsl{Z}_{R}\mathbb{1}_{\{R<\infty\}}\right]\\
				&\leq \mathbb{E}\left[^\mathcal{P} Z_R
				\mathbb{1}_{\{R<\infty\}}\right]\nonumber
				=\mathbb{E}\left[\mathbb{E}\left[Z_R
				\mathbb{1}_{\{R<\infty\}}\ \middle\vert\ 
				\mathcal{F}_{R-}\right]\right]\\
				&= \mathbb{E}\left[ Z_R\mathbb{1}_{\{R<\infty\}}\right].
			\end{align}
			By $\bar{Z}_{\infty}=Z_{\infty}=0$ we can conclude
			\begin{align*}
			\mathbb{E}\left[\bar{Z}_{T_S}\mathbb{1}_{H_S^-}\right]
					=\mathbb{E}\left[Z_R\mathbb{1}_{\{R=\infty\}}
					\right]
							+\mathbb{E}\left[\bar{Z}_R
							\mathbb{1}_{\{R<\infty\}}\right]
				\leq \mathbb{E}\left[ Z_{T_S}\mathbb{1}_{H_S^-}\right]
			\end{align*}
			and, because $Z\leq \bar{Z}$, we get
			\[
					\bar{Z}_{T_S}\mathbb{1}_{H_S^-}=Z_{T_S}\mathbb{1}_{H_S^-}.
			\]
			The ``in particular part'' follows because $H_S\subset \left\{
			\bar{Z}_{T_S}= Z_{T_S}\right\}$, 
			$H_S^+\subset \left\{\bar{Z}_{T_S}< Z_{T_S}\right\}$ 
			and $H_S^-\cup H_S\cup H_S^+=\Omega$ (see Proposition 
			\ref{optstop_pro_5}).			
			
			Finally we prove the assertion in the optional case. By
			 Proposition \ref{optstop_pro_7} we get that $Z$ is 
			pathwise upper-semicontinuous
			  from the right and by repeating the arguments in \eqref{extension_eq_4} we have with $Z_\infty=0$ that
			\begin{align}\label{extension_eq_5}
				\mathbb{E}\left[\lsl{Z}_{T_S}\mathbb{1}_{H_S^-}
							\mathbb{1}_{\{T_S<\infty\}}\midG \aFA_S\right]
							&= \mathbb{E}\left[Z_{T_S}\mathbb{1}_{H_S^-}
							\mathbb{1}_{\{T_S<\infty\}}\midG \aFA_S\right]\\
							&=\mathbb{E}\left[Z_{T_S}\mathbb{1}_{H_S^-}
							\midG \aFA_S\right].
			\end{align}
			Moreover, we get by Lemma 
			\ref{extension_lem_2} (ii) that ${^\ast Z_\infty} \leq 
			{^\mathcal{P} Z_\infty}=0$, which implies by \eqref{extension_eq_5}
			\begin{align}\label{Main:253}
				\mathbb{E}\left[\lsl{Z}_{T_S}\mathbb{1}_{H_S^-}
							\midG \aFA_S\right]
							\leq \mathbb{E}\left[Z_{T_S}\mathbb{1}_{H_S^-}
							\midG \aFA_S\right].
			\end{align}
			Combining this result with \eqref{Main:230},
			the pathwise upper-semicontinuity from the right of $Z$ and $Z_\infty^\ast=0$,
			 we obtain
			\begin{align*}
				\bar{Z}_S&\overset{\eqref{Main:230}}{=}\mathbb{E}\left[\lsl{Z}_{T_S}
				\mathbb{1}_{H_S^-}+Z_{T_S}\mathbb{1}_{H_S}
								+\lsr{Z}{T_S}\mathbb{1}_{H_S^+}\midG \aFA_S\right]\\
						&\leq \mathbb{E}\left[\lsl{Z}_{T_S}
						\mathbb{1}_{H_S^-}+Z_{T_S}\mathbb{1}_{H_S\cup H_S^+}
								\midG \aFA_S\right]	
						\overset{\eqref{Main:253}}{\leq} \mathbb{E}\left[Z_{T_S}| \aFA_S\right]
								\leq \mathbb{E}\left[\bar{Z}_{T_S}| \aFA_S\right]\leq
								\bar{Z}_S.		
			\end{align*}
			Here, we have used in the last step that $\bar{Z}$ is a 
			supermartingale and $T_S$ is a
			$\Lambda$-stopping time as $\Lambda=\mathcal{O}$.
			This shows $\mathbb{E}[Z_{T_S}| \aFA_S]= \mathbb{E}[\bar{Z}_{T_S}| \aFA_S]
			=\bar{Z}_{S}$, which in particular implies, by $Z\leq \bar{Z}$, that
			$Z_{T_S} = \bar{Z}_{T_S}$ almost surely and hence $\bar{T}_S=T_S$.			
		\end{proof}		

	%%%%%%%%%%%%%%%%%%%%%%%%%%%%%%%%%%%%%%%%%%%%%%%%%%%%%%%%%%%%%%%%%%%%
	%%%%%%%%%%%%%%%%%%%%%%%%%%%%%%%%%%%%%%%%%%%%%%%%%%%%%%%%%%%%%%%%%%%%

	\subsection{Limit results for Meyer projections}\label{extension_ssec_4}
	
		Assume that $\Lambda$ is
		embedded in the
		sense of \eqref{extension_eq_1}.
		The next result can be viewed as a version of Fatou's lemma for Meyer projections.
	
		\begin{Pro}\label{extension_pro_3}
			Assume that $\Lambda$ is
					embedded in the
					sense of \eqref{extension_eq_1}. Let $Z$ be an $(\mathcal{F}\otimes 
					\mathcal{B}([0,\infty)))$-measurable process 
					of class($D^\Lambda$) with $Z_\infty=0$. Then the following two assertions hold true:
			\begin{enumerate}[(i)]
				\item For an arbitrary $\mathcal{F}$-stopping time $T$, we have		
					\[
						{^{\mathcal{O}}
						(Z_\ast)}_T\leq \left({^\Lambda Z}\right)_{T\ast}
						\leq \left({^\Lambda Z}\right)_{T}^\ast\leq {^{\mathcal{O}}
						(Z^\ast)}_T,
					\]
					where
					${\mathcal{O}}$ is the optional-$\sigma$-field
					with respect to the filtration $(\mathcal{F}_t)_{t\geq 0}$.
					In particular, if $Z$ has right-limits then we get
					\[
						\left({^\Lambda Z}\right)_{T+}={^{\mathcal{O}}
						(Z_+)}_T.
					\]
				\item For an arbitrary predictable 
				$\mathcal{F}$-stopping time $T$, we have
					\[
						{^{\mathcal{P}}
						({_\ast Z})}_T\leq {_\ast \left({^\Lambda Z}\right)_{T}}
						\leq {^\ast \left({^\Lambda Z}\right)_{T}}\leq {^{\mathcal{P}}
						({^\ast Z})}_T,
					\]
					where
					${\mathcal{P}}$ denotes the predictable-$\sigma$-field
					with respect to the filtration $(\mathcal{F}_t)_{t\geq 0}$.
					In particular, if $Z$ has left-limits then we get
					\[
						{\left({^\Lambda Z}\right)_{T-}}={^{\mathcal{P}}
						({Z_-})}_T.
					\]
			\end{enumerate}
		\end{Pro}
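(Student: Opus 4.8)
The plan is to collapse the four-term chain in each part to a single one-sided inequality and obtain the rest for free. The two middle inequalities $\left({^\Lambda Z}\right)_{T\ast}\leq \left({^\Lambda Z}\right)_{T}^\ast$ and ${_\ast\left({^\Lambda Z}\right)_T}\leq {^\ast\left({^\Lambda Z}\right)_T}$ are just ``$\liminf\leq\limsup$''. The two lower bounds follow from the two upper bounds applied to $-Z$ (again of class($D^\Lambda$) with $(-Z)_\infty=0$), using linearity of the optional and predictable projection together with the elementary identities $\lsr{(-Z)}{}=-Z_\ast$, ${(-Z)_\ast}=-\lsr{Z}{}$ (and the analogues for the left envelopes). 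So it suffices to prove $\left({^\Lambda Z}\right)_{T}^\ast\leq {^{\mathcal{O}}(\lsr{Z}{})}_T$ in (i) and ${^\ast\left({^\Lambda Z}\right)_T}\leq {^{\mathcal{P}}(\lsl{Z})}_T$ in (ii). Throughout I would use that, by Remark \ref{optstop_rem_3} and Lemma \ref{extension_lem_1}, the right-upper envelope $\left({^\Lambda Z}\right)^\ast$ is $\mathcal{F}$-progressive, so $\left({^\Lambda Z}\right)_{T}^\ast$ is $\mathcal{F}_T$-measurable, while the left-upper envelope ${^\ast\left({^\Lambda Z}\right)}$ is $\mathcal{F}$-predictable, so ${^\ast\left({^\Lambda Z}\right)_T}$ is $\mathcal{F}_{T-}$-measurable at a predictable $T$; the defining properties ${^{\mathcal{O}}(\lsr{Z}{})}_T=\mathbb{E}[\lsr{Z}{T}\mid \mathcal{F}_T]$ and ${^{\mathcal{P}}(\lsl{Z})}_T=\mathbb{E}[\lsl{Z}_T\mid\mathcal{F}_{T-}]$ of the classical projections will be applied freely.

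For part (i), I would apply Proposition \ref{extension_pro_1}(i) to the $\Lambda$-measurable process ${^\Lambda Z}$ (with $\left({^\Lambda Z}\right)_\infty=Z_\infty=0$) to get $\Lambda$-stopping times $T_n\downarrow T$, $\infty>T_n>T$ on $\{T<\infty\}$, with $\left({^\Lambda Z}\right)_{T}^\ast=\lim_n\left({^\Lambda Z}\right)_{T_n}$ a.s. By class($D^\Lambda$) the family $\{\left({^\Lambda Z}\right)_{T_n}\}$ is uniformly integrable and the convergence is also in $L^1$. Since $T<T_n$, Lemma \ref{extension_lem_1} and Remark \ref{meyer_rem_1} give $\mathcal{F}_T=\mathcal{F}^\Lambda_{T+}\subset\mathcal{F}^\Lambda_{T_n-}\subset\mathcal{F}^\Lambda_{T_n}$, so for $A\in\mathcal{F}_T$ the projection theorem (Theorem \ref{meyer_thm_3}) and the tower property yield $\mathbb{E}[\left({^\Lambda Z}\right)_{T_n}\mathbb{1}_A]=\mathbb{E}[Z_{T_n}\mathbb{1}_A]$. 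Hence
\begin{align*}
	\mathbb{E}\left[\left({^\Lambda Z}\right)_{T}^\ast\mathbb{1}_A\right]
	&=\lim_{n}\mathbb{E}\left[Z_{T_n}\mathbb{1}_A\right]
	\leq \mathbb{E}\left[\limsup_{n} Z_{T_n}\,\mathbb{1}_A\right]\\
	&\leq \mathbb{E}\left[\lsr{Z}{T}\mathbb{1}_A\right]
	=\mathbb{E}\left[{^{\mathcal{O}}(\lsr{Z}{})}_T\mathbb{1}_A\right],
\end{align*}
where the first inequality is the reverse Fatou lemma and the second uses $\limsup_n Z_{T_n}\leq\lsr{Z}{T}$ because $T_n\downarrow T$ from the right. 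As $A\in\mathcal{F}_T$ is arbitrary and both extremes are $\mathcal{F}_T$-measurable, this gives $\left({^\Lambda Z}\right)_{T}^\ast\leq {^{\mathcal{O}}(\lsr{Z}{})}_T$ a.s. When $Z$ has right limits, $\lsr{Z}{}=Z_\ast=Z_+$, the two outer terms of the chain coincide, and the squeeze forces $\left({^\Lambda Z}\right)_{T+}={^{\mathcal{O}}(Z_+)}_T$.

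For part (ii), the same idea meets a reversed inclusion of $\sigma$-fields, which is the heart of the matter. Proposition \ref{extension_pro_1}(ii) applied to ${^\Lambda Z}$ gives $\Lambda$-stopping times $T_n\uparrow T$, $T_n<T$, with ${^\ast\left({^\Lambda Z}\right)_T}=\lim_n\left({^\Lambda Z}\right)_{T_n}$ a.s. and in $L^1$; but now $T_n<T$ forces $\mathcal{F}^\Lambda_{T_n}\subset\mathcal{F}_{T-}$, so conditioning on $\mathcal{F}_{T-}$ no longer unwinds the projection. Instead I would condition on the \emph{increasing} fields $\mathcal{F}^\Lambda_{T_m}$: for $A\in\mathcal{F}^\Lambda_{T_m}$ and $n\geq m$ one has $A\in\mathcal{F}^\Lambda_{T_n}$, so $\mathbb{E}[\left({^\Lambda Z}\right)_{T_n}\mathbb{1}_A]=\mathbb{E}[Z_{T_n}\mathbb{1}_A]$, and passing to the limit with reverse Fatou and $\limsup_n Z_{T_n}\leq\lsl{Z}_T$ yields
\[
	\mathbb{E}\left[{^\ast\left({^\Lambda Z}\right)_T}\mathbb{1}_A\right]\leq \mathbb{E}\left[\lsl{Z}_T\,\mathbb{1}_A\right],\qquad A\in\bigcup_m\mathcal{F}^\Lambda_{T_m}.
\]
The plan is then to upgrade this from the generating algebra $\bigcup_m\mathcal{F}^\Lambda_{T_m}$ to all of $\mathcal{F}_{T-}$ by a measure-approximation argument (equivalently, Lévy's upward theorem), which reduces everything to the identity $\bigvee_m \mathcal{F}^\Lambda_{T_m}=\mathcal{F}_{T-}$. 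Granted this, since ${^\ast\left({^\Lambda Z}\right)_T}$ is $\mathcal{F}_{T-}$-measurable and ${^{\mathcal{P}}(\lsl{Z})}_T=\mathbb{E}[\lsl{Z}_T\mid\mathcal{F}_{T-}]$, the inequality extends to every $A\in\mathcal{F}_{T-}$ and gives ${^\ast\left({^\Lambda Z}\right)_T}\leq {^{\mathcal{P}}(\lsl{Z})}_T$ a.s.; existing left limits then collapse the chain to $\left({^\Lambda Z}\right)_{T-}={^{\mathcal{P}}(Z_-)}_T$, exactly as in (i).

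The hard part is the identity $\bigvee_m\mathcal{F}^\Lambda_{T_m}=\mathcal{F}_{T-}$. The inclusion ``$\subset$'' is clear from $\mathcal{F}^\Lambda_{T_m}\subset\mathcal{F}_{T_m}\subset\mathcal{F}_{T-}$ (as $T_m<T$). For ``$\supset$'' I would invoke Lemma \ref{extension_lem_1} in the form $\mathcal{F}^\Lambda_{T_m-}=\mathcal{F}_{T_m-}$ together with the fact that the approximating sequence dominates, by construction, a strictly increasing announcing sequence $(\tilde T_m)$ of the predictable time $T$ with $\tilde T_m<T_{m+1}$ (such a sequence exists since $T>0$ is predictable). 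Then $\mathcal{F}_{\tilde T_m}\subset\mathcal{F}_{T_{m+1}-}=\mathcal{F}^\Lambda_{T_{m+1}-}\subset\mathcal{F}^\Lambda_{T_{m+1}}$, so $\bigvee_m\mathcal{F}_{\tilde T_m}\subset\bigvee_m\mathcal{F}^\Lambda_{T_m}$, and the classical identity $\bigvee_m\mathcal{F}_{\tilde T_m}=\mathcal{F}_{T-}$ for an announcing sequence closes the sandwich. A secondary technical point, common to both parts, is the justification of the reverse Fatou step: uniform integrability from class($D^\Lambda$) suffices, via truncation at level $M$ followed by $M\to\infty$, because the $L^1$-tails of $\{Z_{T_n}\}$ are then uniformly small.
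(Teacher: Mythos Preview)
Your proof is correct and follows essentially the same route as the paper's: realize the one-sided $\limsup$ via the approximating $\Lambda$-stopping times of Proposition~\ref{extension_pro_1}, unwind the $\Lambda$-projection against test sets in the appropriate $\sigma$-field, apply (reverse) Fatou justified by class($D^\Lambda$), and in part~(ii) pass from $\bigcup_m\mathcal{F}^\Lambda_{T_m}$ to $\mathcal{F}_{T-}$ by martingale convergence. The paper handles this last step more tersely, simply citing L\'evy's upward theorem together with $\mathcal{F}^\Lambda_{T-}=\mathcal{F}_{T-}$, whereas you spell out why $\bigvee_m\mathcal{F}^\Lambda_{T_m}=\mathcal{F}_{T-}$ via the interlacing $\tilde T_m<T_{m+1}$ inherited from the construction in the proof of Proposition~\ref{extension_pro_1}(ii); note that this relies on the sequence $(T_n)$ being non-decreasing, which is true by that construction ($T_n=\inf_{k\geq n}S_k$) but not stated in the proposition itself.
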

		
		\begin{Rem}\label{extension_rem_3}
			 One should remark that the above results for example do not imply that $\{ \left({^\Lambda Z}\right)^\ast> {^{\mathcal{O}}
						(Z^\ast)}\}$ is an evanescent set as
				one can not apply in general the Meyer section 
				theorem to $\left({^\Lambda Z}\right)^\ast$.
			\end{Rem}
		
		\begin{proof}
			\emph{Part (i):}
			Let $T$ be an arbitrary $\mathcal{F}$-stopping time.
			Then there exists by Proposition \ref{extension_pro_1} (i)
			a sequence $(T_n)_{n\in \mathbb{N}}\subset \stm$
			such that $T_n\geq T$, $T<T_n<\infty$ on $\{T<\infty\}$,
			$\lim_{n\rightarrow \infty} T_n=T$ and $(^\Lambda Z)_{T}^\ast=
			\lim_{n\rightarrow \infty} (^\Lambda Z)_{T_n}$. Hence, we get on
			$\{T<\infty\}$ 
			\begin{align}\label{extension_eq_6}
				(^\Lambda Z)_{T}^\ast=\lim_{n\rightarrow \infty} (^\Lambda Z)_{T_n}
				=\lim_{n\rightarrow \infty} \mathbb{E}\left[Z_{T_n}\middle|
				\mathcal{F}^\Lambda_{T_n}\right],
			\end{align}
			where $({^\Lambda Z})_T^\ast$ is $\FA_T$-measurable. Indeed, by \citing{DM78}{Theorem 90}{143} the process $({^\Lambda Z})^\ast$ is $\FA$-progressively measurable and therefore by \citing{DM78}{Theorem 64}{122}
			we have $({^\Lambda Z})_T^\ast$ is $\FA_T$-measurable.
			On the other hand we have on $\{T<\infty\}$ that
			\begin{align}\label{extension_eq_7}
				^\mathcal{O}(Z^\ast)_{T}=\mathbb{E}\left[Z_{T}^\ast
				\middle|
				\mathcal{F}_{T}\right]= \mathbb{E}\left[Z_{T}^\ast\middle|
				\mathcal{F}^\Lambda_{T+}\right],
			\end{align}
			where we have used that $\mathcal{F}_{T}=\mathcal{F}^\Lambda_{T+}$
			by Lemma \ref{extension_lem_1}.			
			For $A \in \mathcal{F}_T=\mathcal{F}^\Lambda_{T+}\subset
			\mathcal{F}^\Lambda_{T_n}$, we can now use Fatou's Lemma and that $Z$ is of class($D^\Lambda$) to conclude
			\begin{align}\label{extension_eq_8}
			\mathbb{E}\left[ \lim_{n\rightarrow \infty}
				\mathbb{E}\left[Z_{T_n}\middle|
				\mathcal{F}^\Lambda_{T_n}\right]
				\mathbb{1}_{A}\right]\nonumber
				&=\lim_{n\rightarrow \infty} \mathbb{E}\left[ 
				\mathbb{E}\left[Z_{T_n}\middle|
				\mathcal{F}^\Lambda_{T_n}\right]
				\mathbb{1}_{A}\right]\nonumber\\
				&=\lim_{n\rightarrow \infty} \mathbb{E}\left[ { Z}_{T_n}
				\mathbb{1}_{A}\right]
				\leq\mathbb{E}\left[ { Z}_{T}^\ast
				\mathbb{1}_{A}\right].
			\end{align}
			As $A \in \mathcal{F}_T=\mathcal{F}^\Lambda_{T+}$
			was arbitrary, equation
			\eqref{extension_eq_8} leads to
			\[
				 \lim_{n\rightarrow \infty}
				\mathbb{E}\left[Z_{T_n}\middle|
				\mathcal{F}^\Lambda_{T_n}\right]
				\leq \mathbb{E}\left[Z_{T}^\ast\middle|
				\mathcal{F}^\Lambda_{T+}\right],
			\]
			which proves $\left({^\Lambda Z}\right)_{T}^\ast\leq {^{\mathcal{O}}
			(Z^\ast)}_T$ by \eqref{extension_eq_6} and \eqref{extension_eq_7}.
			Analogously we get that ${^{\mathcal{O}}
			(Z_\ast)}_T\leq \left({^\Lambda Z}\right)_{T\ast}$ by using
			Fatou's Lemma into the other direction and Remark 
			\ref{extension_rem_1} (ii).
						
			\emph{Part (ii):}
			Let $T$ be an arbitrary predictable $\mathcal{F}$-stopping time, where we assume $T>0$ as we could replace $T$ by $T_{\{T>0\}}$ as we defined ${^\ast Z_0}={_\ast Z_0}=0$.
			Then there exists by Proposition \ref{extension_pro_1}
			a sequence $(T_n)_{n\in \mathbb{N}}\subset \stm$
			such that $T_n< T$, 
			$\lim_{n\rightarrow \infty} T_n=T$ and ${^\ast (^\Lambda Z)}_{T}=
			\lim_{n\rightarrow \infty} (^\Lambda Z)_{T_n}$. Hence, we get 
			\begin{align}\label{extension_eq_9}
				^\ast (^\Lambda Z)_{T}=\lim_{n\rightarrow \infty} 
				(^\Lambda Z)_{T_n}
				=\lim_{n\rightarrow \infty} \mathbb{E}\left[Z_{T_n}\middle|
				\mathcal{F}^\Lambda_{T_n}\right].
			\end{align}
			For $A \in \mathcal{F}_{T_m}^\Lambda$,
			with $m\in \mathbb{N}$ fixed, we can use Fatou's Lemma and that $Z$ is 
			of class($D^\Lambda$) to obtain
			\begin{align}\label{extension_eq_10}
				\mathbb{E}\left[ {^\ast({^\Lambda Z})_T}
				\mathbb{1}_{A}\right]&=\mathbb{E}\left[ \lim_{n\rightarrow \infty}
				\mathbb{E}\left[Z_{T_n}\middle|
				\mathcal{F}^\Lambda_{T_n}\right]
				\mathbb{1}_{A}\right]\nonumber\\
				&=		\lim_{n\rightarrow \infty} \mathbb{E}\left[ 
				\mathbb{E}\left[Z_{T_n}\middle|
				\mathcal{F}^\Lambda_{T_n}\right]
				\mathbb{1}_{A}\right]\nonumber\\		
				&=\lim_{n\rightarrow \infty} \mathbb{E}\left[ Z_{T_n}
				\mathbb{1}_{A}\right]
				\leq \mathbb{E}\left[ {^\ast Z}_{T}
				\mathbb{1}_{A}\right].
			\end{align}
			As $A \in \mathcal{F}_{T_m}$ was arbitrary, equation
			\eqref{extension_eq_10} leads to
			\[
				\mathbb{E}\left[{^\ast({^\Lambda Z})_T}\middle|
				\mathcal{F}^\Lambda_{T_m}\right]
				\leq \mathbb{E}\left[{^\ast Z}_T \middle|
				\mathcal{F}^\Lambda_{T_m}\right].
			\]
			Now we get by \citing{DM82}{Theorem 31}{26},
			for $m\rightarrow \infty$ and 
			$\mathcal{F}^\Lambda_{T-}=	\mathcal{F}_{T-}$ 
			(see Lemma \ref{extension_lem_1}) that
			\[
				{^\ast({^\Lambda Z})_T}=\lim_{m\rightarrow \infty}
				\mathbb{E}\left[{^\ast({^\Lambda Z})_T}\middle|
				\mathcal{F}^\Lambda_{T_m}\right]
				\leq \lim_{m\rightarrow \infty} 
				\mathbb{E}\left[{^\ast Z}_T \middle|
				\mathcal{F}^\Lambda_{T_m}\right]
				={^\mathcal{P} (^\ast Z)}_T,
			\]
			which proves the first part of our claim. Here we have used for the first identity that by 
			\citing{DM78}{Theorem 89}{143}, ${^\ast({^\Lambda Z})}$ is 
			predictable and hence
			${^\ast({^\Lambda Z})}_T$ is $\mathcal{F}_{T-}$-measurable.
			Analogously, we get 
			the other direction by using Fatou's Lemma for the limes inferior 
			and by recalling Remark \ref{extension_rem_1} (ii). 
		\end{proof}
		
			Let us give another limit theorem for Meyer projections and show that for a continuously parametrized family of stochastic processes, one can choose the corresponding Meyer projections so that they inherit this continuity.

        	\begin{Lem}\label{sto:tools_lem_2}
        	    Let $(Z_t^\ell)_{t\geq 0}$, $\ell \in \mathbb{R}$, be a family of processes such that for $\WM$-almost every $\omega\in \Omega$ we have locally uniform continuity with respect to $\ell$ in the sense that 
        	    \begin{align}\label{Main:3}
        	        	\lim_{\delta \downarrow 0}
        			\sup_{\overset{\ell,\ell'\in C}{|\ell'-\ell|
        			\leq \delta}}\sup_{t\in [0,\infty]}
        			 \left|Z^{\ell'}_t(\omega)-Z^{\ell}_t(\omega)\right|
        			=0\quad \text{for any compact $C\subset \mathbb{R}$.}
        		\end{align}
        		Assume in addition that, for any such $C$,
        		\begin{align}
        			\mathbb{E}\left[\sup_{\overset{\ell,\ell'\in C}{|\ell'-\ell|
        			\leq \delta}}\sup_{t\in [0,\infty]}        			 \left|Z^{\ell'}_t(\omega)-Z^{\ell}_t(\omega)\right|\right]<\infty.\label{Main:4}
        	    \end{align}
        	    Then the family  of $\Lambda$-projections ${^\Lambda (Z^\ell_t)_{t \geq 0}}$, $\ell\in \mathbb{R}$, can be chosen such that for any $\omega \in \Omega$ we also have local uniform continuity with respect to $\ell\in \mathbb{R}$ for also for the family of $\Lambda$-projections, i.e.
        		\[
        			\lim_{\delta \downarrow 0}
        			\sup_{\overset{\ell,\ell'\in C}{|\ell'-\ell|
        			\leq \delta}}\sup_{t\in [0,\infty]}
        			 \left|{^\Lambda}{}{\mathop{(Z^{\ell'})}}_t(\omega)-{^\Lambda}{}{\mathop{(Z^{\ell})}}_t(\omega)\right|
        			=0 \quad \text{for any compact $C\subset \mathbb{R}$}.
        		\]
        	\end{Lem}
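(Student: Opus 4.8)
The plan is to first construct the projections along the countable dense index set $\mathbb{Q}$, to establish there an $\mathbb{P}$-a.s.\ \emph{uniform-in-$t$} modulus of continuity in $\ell$ by means of Meyer's section theorem, and then to extend continuously to all $\ell\in\mathbb{R}$ by passing to a good version. Fix an exhausting sequence of compacts $C_k=[-k,k]$ and a sequence $\delta_j\downarrow 0$. For a compact $C$ and $\delta>0$ set
\[
  m^{C,\delta}:=\sup_{\substack{\ell,\ell'\in C\cap\mathbb{Q}\\ |\ell-\ell'|\le\delta}}\ \sup_{t\in[0,\infty]}\bigl|Z^{\ell'}_t-Z^{\ell}_t\bigr|,
\]
which is $\mathcal{F}$-measurable, dominated by the integrable modulus in \eqref{Main:4}, and decreases to $0$ $\mathbb{P}$-a.s.\ as $\delta\downarrow0$ by \eqref{Main:3}; in particular $\mathbb{E}[m^{C,\delta}]\to0$ by dominated convergence. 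For each rational $\ell$ fix a version of the projection ${}^\Lambda(Z^\ell)$. Using linearity and monotonicity of the $\Lambda$-projection (both immediate from Theorem \ref{meyer_thm_3} and Corollary \ref{meyer_cor_1}) together with the pathwise bound $|Z^{\ell'}-Z^\ell|\le m^{C,\delta}\,\mathbb{1}$, valid for all rational $\ell,\ell'\in C$ with $|\ell-\ell'|\le\delta$, one obtains, up to indistinguishability,
\[
  \bigl|{}^\Lambda(Z^{\ell'})-{}^\Lambda(Z^{\ell})\bigr|\ \le\ N^{C,\delta}:={}^\Lambda\!\left(m^{C,\delta}\,\mathbb{1}\right),
\]
where $m^{C,\delta}\mathbb{1}$ is class $D^\Lambda$ by \eqref{Main:4} (cf.\ Remark \ref{meyer_rem_2}). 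Since there are only countably many such pairs, these inequalities hold simultaneously outside a single null set.

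\textbf{Maximal estimate (the crux).} The key point is to control $\sup_t N^{C,\delta}_t$, and here I would avoid any $L\log L$-type martingale maximal inequality and argue directly with Meyer's section theorem. The set $\{N^{C,\delta}>\lambda\}$ lies in $\Lambda$, so for $\epsilon>0$ Theorem \ref{meyer_thm_2} yields $S\in\stm$ with graph inside this set and $\mathbb{P}(S<\infty)>\mathbb{P}(\sup_{t<\infty}N^{C,\delta}_t>\lambda)-\epsilon$. On $\{S<\infty\}$ one has $N^{C,\delta}_S=\mathbb{E}[m^{C,\delta}\mid\mathcal{F}^\Lambda_S]>\lambda$, whence, using $\{S<\infty\}\in\mathcal{F}^\Lambda_S$,
\[
  \lambda\,\mathbb{P}(S<\infty)\le\mathbb{E}\bigl[N^{C,\delta}_S\mathbb{1}_{\{S<\infty\}}\bigr]=\mathbb{E}\bigl[m^{C,\delta}\mathbb{1}_{\{S<\infty\}}\bigr]\le\mathbb{E}[m^{C,\delta}].
\]
Letting $\epsilon\downarrow0$ and adding the contribution at $t=\infty$ (a Markov estimate, as $N^{C,\delta}_\infty=m^{C,\delta}$) gives the weak-type bound $\mathbb{P}(\sup_{t}N^{C,\delta}_t>\lambda)\le 2\,\mathbb{E}[m^{C,\delta}]/\lambda$. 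Since $\mathbb{E}[m^{C,\delta}]\to0$, the nonnegative random variables $G^{C,\delta}:=\sup_t N^{C,\delta}_t$ tend to $0$ in probability as $\delta\downarrow0$; being monotone in $\delta$, they in fact converge to $0$ $\mathbb{P}$-a.s. Consequently, on a set $\Omega_0$ of full measure (intersecting the exceptional sets over all $C_k$ and $\delta_j$) the map $\ell\mapsto{}^\Lambda(Z^\ell)$, restricted to $\mathbb{Q}$, is uniformly continuous on each $C_k\cap\mathbb{Q}$ for the uniform-in-$t$ norm.

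\textbf{Extension and identification.} On $\Omega_0$ this uniformly continuous rational family extends uniquely to a family $(\tilde Z^\ell)_{\ell\in\mathbb{R}}$ that is locally uniformly continuous in $\ell$ (uniformly in $t$); off $\Omega_0$ I set $\tilde Z^\ell\equiv0$ for all $\ell$. By construction the asserted modulus-of-continuity limit then holds for \emph{every} $\omega\in\Omega$. It remains to verify that each $\tilde Z^\ell$ is an admissible version of ${}^\Lambda(Z^\ell)$. Measurability is clear: $\tilde Z^\ell$ is the pointwise limit of the $\Lambda$-measurable processes ${}^\Lambda(Z^{q_n})$ along a fixed rational sequence $q_n\to\ell$ on the full set $\Omega_0$, and $\mathbb{P}$-completeness of $\Lambda$ (Lemma \ref{extension_lem_1}) makes the extension by $0$ harmless. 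For the identification, fix a finite $S\in\stm$; since $|Z^{q_n}_S-Z^\ell_S|\le m^{C,|q_n-\ell|}\to0$ in $L^1$, we get ${}^\Lambda(Z^{q_n})_S=\mathbb{E}[Z^{q_n}_S\mid\mathcal{F}^\Lambda_S]\to\mathbb{E}[Z^\ell_S\mid\mathcal{F}^\Lambda_S]={}^\Lambda(Z^\ell)_S$ in $L^1$, while $\tilde Z^\ell_S=\lim_n{}^\Lambda(Z^{q_n})_S$ by the uniform convergence; hence $\tilde Z^\ell_S={}^\Lambda(Z^\ell)_S$ a.s.\ for every finite $S\in\stm$, and Corollary \ref{meyer_cor_1} shows that $\tilde Z^\ell$ and ${}^\Lambda(Z^\ell)$ are indistinguishable. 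Thus $(\tilde Z^\ell)_{\ell\in\mathbb{R}}$ is the desired jointly continuous choice of $\Lambda$-projections. The delicate step is the maximal estimate of the second paragraph: ordinary conditioning is not monotone in the $\sigma$-field, so passing from the pointwise bound on $N^{C,\delta}$ to uniform-in-$t$ smallness genuinely requires the section theorem rather than a martingale maximal inequality under the mere $L^1$-hypothesis \eqref{Main:4}.
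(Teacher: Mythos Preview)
Your proof is correct and follows the same skeleton as the paper's: bound the difference of projections by the projection ${}^\Lambda(m^{C,\delta})$ of the modulus, control $\sup_t {}^\Lambda(m^{C,\delta})_t$ by a weak-type estimate, and pass from rationals to all of $\mathbb{R}$. The execution differs in two places.

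For the maximal estimate, the paper simply invokes Doob's weak-type inequality for $\Lambda$-martingales (citing \cite{DM82}, Appendix~1, (3.1)) to obtain $\lambda\,\mathbb{P}\bigl(\sup_t {}^\Lambda Z(\delta)_t>\lambda\bigr)\le\mathbb{E}[Z(\delta)]$. Your section-theorem argument is essentially a direct proof of this same inequality, so the two routes coincide here. Your closing remark that a martingale maximal inequality would not suffice under the mere $L^1$ hypothesis~\eqref{Main:4} is not quite accurate: the weak-type $(1,1)$ Doob inequality needs only $L^1$-integrability of the terminal value, and that is precisely what the paper uses.

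For the passage from $\mathbb{Q}$ to $\mathbb{R}$, the paper first cites \cite{KP17} to secure a version of the projections that is already pointwise continuous in $\ell$ for \emph{every} $(\omega,t)$; this continuity then lets one replace the supremum over rational pairs by the supremum over all real pairs in the final estimate. You instead build the continuous extension $\tilde Z^\ell$ by Cauchy completion of the uniformly continuous rational family and verify afterwards, via $L^1$-convergence of conditional expectations and Corollary~\ref{meyer_cor_1}, that $\tilde Z^\ell$ is indistinguishable from ${}^\Lambda(Z^\ell)$. Your route is more self-contained (no external reference needed), while the paper's is shorter once the cited result is granted; both are sound.
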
	
	    
	    	\begin{proof}
	    	Noting the arguments of \cite{KP17} only require Meyer's section and projection theorem, which also hold for Meyer-$\sigma$-fields, we first obtain that the $\Lambda$-projections of $Z^\ell$, $\ell \in \mathbb{R}$,
    		can be chosen such that
%Old comment:
% 	The next result uses \citing{KP17}{Section 5, Corollary 2}{324},
%         	for $d=1$, but for general Meyer-$\sigma$-fields rather than just for the optional and predictable-$\sigma$-fields.
%         	This is possible as the proofs of \cite{KP17} do not use special properties of the optional or predictable $\sigma$-field, but merely the
%         	Optional and the Predictable Section Theorem and existence of respective projections.
%         	Hence, with an application of the Meyer Section Theorem (see Theorem \ref{meyer_thm_2})
%         	and the definition of $\Lambda$-projections (see Definition and Theorem \ref{meyer_thm_3})
%         	one can use their results, mutatis mutandis, in our setting. Moreover, to extend the
%         	pointwise convergence obtained by \cite{KP17} to uniform convergence, we proceed as in \citing{BK14}{Proof of Lemma C.1}{56-58}, and use the results on optional strong supermartingales  of \cite{DM82}, Appendix~1, properly adapted to $\Lambda$-measurable processes.
	    	\begin{align}\label{continuity}
    			\lim_{\ell'\rightarrow \ell} {^\Lambda}{}{\mathop{(Z^{\ell'})}}_t(\omega)
    			={^\Lambda}{}{\mathop{(Z^{\ell})}}_t(\omega)\quad \text{ for all } \ell\in \mathbb{R},
    			\ (\omega,t)\in \Omega\times [0,\infty]. 
    		\end{align}
    		To deduce uniform convergence as claimed, fix in the following a compact interval $I\subset \mathbb{R}$ and
    	    observe that
    		\[
    			Z(\delta):=\sup_{\overset{\ell,\ell'\in I}{|\ell'-\ell|
    			\leq \delta}}\sup_{t\in [0,\infty]} \left|Z_t^{\ell'}-Z_t^{\ell}\right|, \quad \delta>0,
    		\]
    		converges to zero almost surely and in $\mathrm{L}^1(\WM)$ as $\delta \downarrow 0$ because of~\eqref{Main:3} and \eqref{Main:4}.
    		Furthermore for fixed $\ell,\ell'\in I$ with $|\ell-\ell'|\leq \delta$ 
    		and any $T\in \stm$ we have 
    		\begin{align*}
    			\left|{^\Lambda}{}{\mathop{(Z^{\ell'})}}_T
    							-{^\Lambda}{}{\mathop{(Z^{\ell})}}_T\right|
    				&\leq \mathbb{E}\left[|Z^{\ell'}_T-Z^\ell_T|\middle| \aFA_T\right]\\
    				&\leq \mathbb{E}\left[Z(\delta)\middle| \aFA_T\right]
    				={^\Lambda}{}{\mathop{Z(\delta)}}_T\,\text{ a.s. on } \{T<\infty\}.
    		\end{align*}
    		Hence, by the Meyer Section Theorem, $|{^\Lambda}{}{\mathop{(Z^{\ell'})}}
    							-{^\Lambda}{}{\mathop{(Z^{\ell})}}|
    							\leq {^\Lambda}{}{Z(\delta)}$
    		up to an evanescent set for any fixed $\ell,\ell'\in I$ with $|\ell-\ell'|\leq \delta$.
    		Therefore, almost surely, 
    		\[
    			\sup_{\overset{\ell,\ell'\in I\cap \mathbb{Q}}{|\ell'-\ell|
    		\leq \delta}} \sup_{t\in [0,\infty]} |{^\Lambda}{}{\mathop{(Z^{\ell'})}}_t
    							-{^\Lambda}{}{\mathop{(Z^{\ell})}}_t|
    				\leq \sup_{t\in [0,\infty]} {^\Lambda}{}{Z(\delta)}_t.
    		\]
    		Interchanging its two suprema, the left-hand side can be rewritten as 
    		\begin{align*}
    			\sup_{\overset{\ell,\ell'\in I\cap \mathbb{Q}}{|\ell'-\ell|
    		\leq \delta}} \sup_{t\in [0,\infty]} |{^\Lambda}{}{\mathop{(Z^{\ell'})}}_t
    							-{^\Lambda}{}{\mathop{(Z^{\ell})}}_t|
    				&=\sup_{t\in [0,\infty]} \sup_{\overset{\ell,\ell'\in I\cap \mathbb{Q}}{|\ell'-\ell|
    		\leq \delta}}  |{^\Lambda}{}{\mathop{(Z^{\ell'})}}_t
    							-{^\Lambda}{}{\mathop{(Z^{\ell})}}_t|\\
    				&= \sup_{t\in [0,\infty]} \sup_{\overset{\ell,\ell'\in I}{|\ell'-\ell|
    		\leq \delta}} |{^\Lambda}{}{\mathop{(Z^{\ell'})}}_t
    							-{^\Lambda}{}{\mathop{(Z^{\ell})}}_t|,
    		\end{align*}
    		where we used the pointwise continuity \eqref{continuity} in the last equality.
    		Therefore we have almost surely
    		\[
    		 \sup_{\overset{\ell,\ell'\in I}{|\ell'-\ell|
    		\leq \delta}} 	\sup_{t\in [0,\infty]}|{^\Lambda}{}{\mathop{(Z^{\ell'})}}_t
    							-{^\Lambda}{}{\mathop{(Z^{\ell})}}_t|
    					\leq \sup_{t\in [0,\infty]} {^\Lambda}{}{Z(\delta)}_t,
    		\]
    		and now it suffices to argue that ${^\Lambda}{}{Z(\delta)}_t\rightarrow 0$ almost surely uniformly in $t\in [0,\infty]$.
    		For this, we use Doob's maximal martingale inequality, suitably generalized for $\Lambda$-martingales
    		(\citing{DM82}{Appendix 1, (3.1)}{394}). More precisely, for any $\lambda \in [0,\infty)$ we have by dominated convergence that
    		\[
    			\lambda \WM\left(\sup_{t\in [0,\infty]} {^\Lambda}{}{Z(\delta)}_t> \lambda\right)\leq \mathbb{E}\left[{^\Lambda Z}(\delta)_\infty\right]
    			=\mathbb{E}\left[Z(\delta)\right]\overset{\delta\downarrow 0}{\longrightarrow} 0,
    		\]
    		which finishes our proof.
    	\end{proof}
		
	\section{A stochastic representation theorem and universal stopping signals}\label{sec:rep}
        	In this section we want to study an optimal stopping problem over divided stopping times, where we try to find the optimal time to collect a terminal reward given by a $\Lambda$-measurable $(X_t)_{t\in [0,\infty)}$ when before one receives ``running rewards'' represented by some function $g(\ell)$. Specifically, we want to solve for any $\ell\in \mathbb{R}$ the optimal stopping problem with value
			\begin{align}\label{eq:0}
				\sup_{\tau\in \stmd}
				\mathbb{E}\left[X_\tau+\int_{[0,\tau)}g_t(\ell)\mu(\mathrm{d} t)\right].
			\end{align}
			To make this precise, let us next make the following assumptions.
		    As in the previous chapters, $\Lambda$ is a $\WM$-complete Meyer-$\sigma$-field
		    and we denote by $\FA:=(\FA_t)_{t\geq 0}$ a filtration satisfying the usual conditions such that $\mathcal{P}(\FA)\subset \Lambda\subset \mathcal{O}(\FA)$ (see Theorem \ref{meyer_thm_4}). The $\mu$ on $[0,\infty)$ and the random field $g:\Omega\times [0,\infty)\times \mathbb{R}\rightarrow \mathbb{R}$  satisfy the following assumption:
		    \begin{Ass}\label{sto:frame_ass}
				\begin{compactenum}[(i)]				
    				\item 	$\mu$ is a random Borel-measure on $[0,\infty)$ with $\mu(\{\infty\}):=0$.
					\item 	The random field $g:\Omega\times [0,\infty)\times \mathbb{R} \rightarrow \mathbb{R}$ satisfies:
					\begin{compactenum}
						\item 	For each $\omega\in \Omega$, $t\in [0,\infty)$, the 
							 	function $g_t(\omega,\cdot):\mathbb{R}\rightarrow \mathbb{R}$ is continuous and strictly increasing from $-\infty$ to $\infty$. \label{sto:ass_bullet_1}
						\item For each $\ell \in \mathbb{R}$, the \label{sto:ass_bullet_2}	process $g_\cdot(\cdot,\ell): \Omega\times [0,\infty) \rightarrow \mathbb{R}$ is $\FA\otimes\mathcal{B}([0,\infty))$-measurable with
					 	\[
					 		\mathbb{E}\left[\int_{[0,\infty)} |g_t(\ell)|\mu
							(\mathrm{d} t)\right]<\infty.
					 	\]
					\end{compactenum}
				\end{compactenum}
			\end{Ass}			
		    Moreover, the $\Lambda$-measurable process $X$ is chosen in such a way that there exists a $\Lambda$-measurable process $L$ satisfying 
		    \begin{align}\label{sto:frame_gl}
							X_S=\mathbb{E}\left[\int_{[S,\infty)} 
								g_t\left(\sup_{v\in [S,t]} L_v\right)
							 \mu(\mathrm{d} t)\midG \aFA_S\right]
							,\ S\in \stm,\\
							\mathbb{E}\left[\int_{[S,\infty)} \left\lvert 
								g_t\left(\sup_{v\in [S,t]} L_v\right)
							 \right\rvert \mu(\mathrm{d} t)\right]<\infty\;
							\text{ for any $S\in \stm$}.
							 \label{sto:frame_gl_2}
					\end{align}
					
		    In the representation theorem of \cite{BB18} one can find (rather mild) sufficient conditions on $X$ such that the previous representation is possible.
		    The optimal stopping problem \eqref{eq:0} in the optional case, is discussed in \citing{BF03}{Theorem 2}{6}, albeit only for atomless $\mu$ with full support. In this case a sufficient condition for a representation as in \eqref{sto:frame_gl} was proven in \cite{BK04}. Without these regularity properties, optimal stopping times can no longer be expected from a representation as in \eqref{sto:frame_gl}. But we still can describe optimal divided stopping times in terms of the representing process $L$ and thus provide an optimal stopping characterization alternative to the Snell envelope approach of Theorem \ref{optstop_thm_3}:
		    
		    \begin{Thm}\label{thm:opt_stop}
                Let $\Lambda$ be a $\WM$-complete Meyer-$\sigma$-field, $g$ and $\mu$ satisfy Assumption \ref{sto:frame_ass}. Suppose furthermore that $X$ is a $\Lambda$-measurable process of class($D^\Lambda$), which is left-upper-semicontinuous in expectation at every $S\in \stp$ (see Lemma \ref{extension_lem_2} (ii)), and that $X$ allows a representation by a $\Lambda$-measurable process $L$ with
			\eqref{sto:frame_gl} and \eqref{sto:frame_gl_2}. 
			
			Then $X$ is right-upper-semicontinuous in expectation at every $S\in \stm$ (see Lemma \ref{extension_lem_2} (i)) and $L$ is a universal stopping signal for~\eqref{eq:0} in the sense that, for any $\ell \in \mathbb{R}$, the divided stopping times
		    			\begin{align}\label{Main:10}
		    					\tau_\ell^{(i)}:=(T_{\ell}^{(i)},\emptyset,H_\ell^{(i)},
		    				(H_\ell^{(i)})^c),\quad i=1,2,
		    			\end{align}
		    			with 
		    			\begin{align*}
		    				H_\ell^{(1)}&:=\{L_{T_{\ell}^{(1)}}\geq  \ell\}\supset \{L_{T_{\ell}^{(2)}}>  \ell\}=:H_\ell^{(2)},\\
		    				T_\ell^{(1)}&:=\inf\left\{t\geq 0\midG \sup_{v\in [0,t]}L_v
		    				\geq \ell\right\}
								\leq \inf\left\{t\geq 0\midG \sup_{v\in [0,t]}L_v 
		    				> \ell\right\}=:T_\ell^{(2)}
		    			\end{align*}
		    			attain the supremum in \eqref{eq:0}.
            \end{Thm}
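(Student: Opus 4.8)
The plan is to reduce \eqref{eq:0} to a relaxed optimal stopping problem for a single $\Lambda$-measurable process and then to sandwich that process between one explicit $\Lambda$-martingale built from $L$ and the values $Z^{\ell}_{\tau^{(i)}_\ell}$, identifying the optimal value and the optimizers simultaneously. Fix $\ell\in\mathbb{R}$, put $R^{\ell}_t:=\int_{[0,t)}g_s(\ell)\,\mu(\mathrm{d}s)$ and $Z^{\ell}:=X+R^{\ell}$. Since $R^{\ell}$ is $\FA$-predictable, hence $\Lambda$-measurable, and is dominated by the $\mathbb{P}$-integrable variable $\int_{[0,\infty)}|g_s(\ell)|\,\mu(\mathrm{d}s)$ from Assumption \ref{sto:frame_ass}, the process $Z^{\ell}$ is $\Lambda$-measurable of class($D^\Lambda$), and \eqref{eq:0} reads $\sup_{\sigma\in\stmd}\mathbb{E}[Z^{\ell}_\sigma]$ (the running reward integral being read for divided stopping times so that $X_\sigma+\int_{[0,\sigma)}g_t(\ell)\mu(\mathrm{d}t)=Z^{\ell}_\sigma$), whose solvability is guaranteed by Theorem \ref{optstop_thm_3}.

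For the upper bound I would introduce the $\FA$-martingale
\[
  N^{\ell}_t:=\mathbb{E}\!\left[\int_{[0,\infty)}g_s\Big(\ell\vee\sup_{v\in[0,s]}L_v\Big)\,\mu(\mathrm{d}s)\ \middle|\ \FA_t\right],\qquad W^{\ell}:=\mathbb{E}[N^{\ell}_0],
\]
which is uniformly integrable by \eqref{sto:frame_gl_2} and Assumption \ref{sto:frame_ass}. For any finite $T\in\stm$ the projection theorem gives ${}^{\Lambda}N^{\ell}_T=\mathbb{E}[N^{\ell}_\infty\mid\aFA_T]$; splitting the integral at $T$ and using $\ell\vee\sup_{v\in[0,s]}L_v\ge\ell$ on $[0,T)$, $\ell\vee\sup_{v\in[0,s]}L_v\ge\sup_{v\in[T,s]}L_v$ on $[T,\infty)$, the strict monotonicity of $g$ and the representation \eqref{sto:frame_gl}, one obtains ${}^{\Lambda}N^{\ell}_T\ge R^{\ell}_T+X_T=Z^{\ell}_T$ almost surely. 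By Corollary \ref{meyer_cor_1} this upgrades to ${}^{\Lambda}N^{\ell}\ge Z^{\ell}$ up to an evanescent set, and a tower-property computation shows ${}^{\Lambda}N^{\ell}$ is a $\Lambda$-martingale. Since the inequality survives passage to left- and right-limsups, $Z^{\ell}_\sigma\le ({}^{\Lambda}N^{\ell})_\sigma$ for every divided stopping time, and the optional-sampling identity for divided stopping times (Lemma \ref{optstop_pro_6}, applied with $S=0$) yields $\mathbb{E}[Z^{\ell}_\sigma]\le\mathbb{E}[({}^{\Lambda}N^{\ell})_\sigma]=W^{\ell}$ for all $\sigma\in\stmd$.

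It then remains to show $\mathbb{E}[Z^{\ell}_{\tau^{(i)}_\ell}]=W^{\ell}$, which I would derive from the pathwise identity $Z^{\ell}_{\tau^{(i)}_\ell}=({}^{\Lambda}N^{\ell})_{\tau^{(i)}_\ell}$ together with Lemma \ref{optstop_pro_6}. On the ``stop-at'' set $H^{(i)}_\ell$ this is the domination step read with equalities: before $T^{(i)}_\ell$ the running supremum $\sup_{v\in[0,s]}L_v$ stays $\le\ell$ (for $i=1$) resp.\ $<\ell$ (for $i=2$), while $L_{T^{(i)}_\ell}\ge\ell$ resp.\ $>\ell$ forces $\sup_{v\in[T^{(i)}_\ell,s]}L_v=\ell\vee\sup_{v\in[0,s]}L_v$ for $\mu$-almost every $s\ge T^{(i)}_\ell$, so both inequalities above become equalities and $Z^{\ell}_{T^{(i)}_\ell}=({}^{\Lambda}N^{\ell})_{T^{(i)}_\ell}$. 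On the complementary ``stop-right-after'' set $(H^{(i)}_\ell)^c$ the level $\ell$ is attained only in the limit from the right, i.e.\ $\lsr{L}{T^{(i)}_\ell}\ge\ell$, and I would match the right limits $\lsr{(Z^{\ell})}{T^{(i)}_\ell}=\lsr{({}^{\Lambda}N^{\ell})}{T^{(i)}_\ell}$ by a limiting argument along $s_n\downarrow T^{(i)}_\ell$. This is precisely where the right-upper-semicontinuity in expectation of $X$ enters, and I would establish that property first: using Proposition \ref{extension_pro_1}(i) to realise $\lsr{X}{S}$ along a sequence $S_n\downarrow S$, the monotonicity $\sup_{v\in[S,s]}L_v\ge\sup_{v\in[S_n,s]}L_v$, and the assumed left-upper-semicontinuity in expectation (which, via Lemma \ref{extension_lem_2}(ii), ties $L_S$ to $\lsl{L}_S$), one derives $X_S\ge{}^{\Lambda}(\lsr{X}{})_S$, i.e.\ condition (b) of Lemma \ref{extension_lem_2}(i).

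The main obstacle is the joint treatment of this right-limit component and the atoms of $\mu$. The atom at $S$ enters $X_S$ through the term $g_S(L_S)\,\mu(\{S\})$ in \eqref{sto:frame_gl} but drops out of $\lsr{X}{S}$, so $X_S\ge{}^{\Lambda}(\lsr{X}{})_S$ cannot come from monotonicity alone; it is the left-upper-semicontinuity hypothesis that controls this atomic contribution and closes the gap. On $(H^{(i)}_\ell)^c$ the same subtlety reappears as the requirement that stopping immediately after $T^{(i)}_\ell$ captures the crossing $\lsr{L}{T^{(i)}_\ell}\ge\ell$ without losing the mass $\mu(\{T^{(i)}_\ell\})$, which is exactly what the right-upper-semicontinuity of $X$ guarantees, in direct analogy with its role in the optional case of Proposition \ref{extension_pro_2}. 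Finally, the two signals differ only by the strictness of the crossing: $\tau^{(1)}_\ell$ (level $\ge\ell$) yields the smallest optimizer, corresponding to entry into $\{Z^{\ell}=\bar{Z}^{\ell}\}$ as in Theorem \ref{optstop_thm_3}, and $\tau^{(2)}_\ell$ (level $>\ell$) the largest, corresponding to the loss of the $\Lambda$-martingale property as in Theorem \ref{optstop_thm_4}; that both quadruples are genuine divided stopping times follows from Proposition \ref{optstop_pro_5} and Lemma \ref{extension_lem_1}.
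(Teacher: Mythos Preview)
Your martingale-domination strategy via $N^\ell$ is a reasonable alternative packaging of the upper bound, but you have misidentified where the two semicontinuity hypotheses actually do their work, and this leaves a genuine gap.

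\textbf{Right-upper-semicontinuity.} You claim that left-usc in expectation is needed to derive right-usc, and that Lemma~\ref{extension_lem_2}(ii) ``ties $L_S$ to $\lsl{L}_S$''. Neither is correct: that lemma concerns $X$, not $L$, and the paper obtains right-usc \emph{without} using left-usc at all. The argument is a direct reverse-Fatou estimate: for $S_n\downarrow S$ one bounds the integrand $\mathbb{1}_{[S_n,\infty)}g_t(\sup_{v\in[S_n,t]}L_v)$ from above by the integrable majorant $0\vee g_t(\sup_{v\in[0,t]}L_v)$ (available by~\eqref{sto:frame_gl_2} with $S=0$), and then $\limsup_n\mathbb{E}[X_{S_n}]\le\mathbb{E}[X_S]$ follows. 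Your ``atomic contribution'' discussion is a red herring here.

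\textbf{The actual role of left-usc.} In the paper, left-usc is used for a step your proposal omits entirely: since $\lsl{X}\le{}^{\mathcal P}X$ (Lemma~\ref{extension_lem_2}(ii)), any divided stopping time $\tau=(T,H^-,H,H^+)$ can be replaced by $\tilde\tau=(T,\emptyset,H^-\cup H,H^+)$ without decreasing the value in~\eqref{eq:0}. This is essential because the candidates $\tau_\ell^{(i)}$ have empty ``stop-just-before'' component; without the reduction, competitors with $H^-\neq\emptyset$ could beat them. A concrete deterministic example shows the danger: if $L_v\uparrow\ell$ as $v\uparrow 1$ but $L_v<\ell$ for $v\ge 1$, then $T_\ell^{(1)}=1$, $(H_\ell^{(1)})^c=\Omega$, and one checks $\mathbb{E}[Z^\ell_{\tau_\ell^{(1)}}]<W^\ell$. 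This does not contradict the theorem precisely because left-usc fails there, but it shows your equality claim on $(H_\ell^{(i)})^c$ cannot be established by the ``limiting argument along $s_n\downarrow T_\ell^{(i)}$'' you sketch; some form of the left-usc hypothesis must enter, and you have not said how.

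\textbf{Equality on $(H_\ell^{(i)})^c$ and membership in $\stmd$.} The paper handles the $(H_\ell^{(i)})^c$ contribution not by a pathwise identity with a martingale but by showing that the Fatou inequality used for general $\tau$ becomes an equality for $\tau_\ell^{(i)}$ via \emph{dominated} convergence, the domination coming from $g(\ell)\le g_t(\sup_{v\in(T_\ell^{(i)},t]}L_v)\le g_t(\sup_{v\in[0,t]}L_v)$ on that set. Finally, your appeal to Proposition~\ref{optstop_pro_5} to verify $\tau_\ell^{(i)}\in\stmd$ is misplaced: that proposition concerns the Snell-envelope construction $(T_S,H_S^-,H_S,H_S^+)$, not level-passage times of $L$. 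One has to argue directly that $(T_\ell^{(i)})_{H_\ell^{(i)}}$ is a $\Lambda$-stopping time, using that $\{L\ge\ell\}\cap\{\sup_{v\in[0,\cdot)}L_v<\ell\}\in\Lambda$ together with Lenglart's characterization of d\'ebuts whose graph lies in the set.
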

            
            \begin{proof}
                We first show that $X$ is right-upper-semicontinuous in expectation. For that we will adapt the proof of \citing{BK04}{Theorem 2}{1048}, to 
		our stochastic setting.
		 Define for arbitrary  $S\in \stm$ the process $i^S:\Omega\times [0,\infty)
		\rightarrow [0,\infty]$ by
			\begin{align}\label{Gl:c20}
				i^S_t(\omega)
				:=\mathbb{1}_{\stsetRO{S}{\infty}}(\omega,t)
							g_t\left(\omega,\sup_{v\in [S(\omega),t]} L_v(\omega)\right) \leq 0\vee g_t\left(\omega,\sup_{v\in [0,t]} L_v(\omega)\right).
			\end{align}
			Consider now a sequence
											$(S_n)_{n\in \NZ}\subset \stmg{S}$ with
											$\lim_{n\rightarrow \infty} \mu([S,S_n))=0$ almost surely. 
			Combining the estimate \eqref{Gl:c20} with~\eqref{sto:frame_gl_2}, we may use Fatou's lemma
			 to obtain from the representation property of $L$  that 
			\begin{align*}
				\limsup_{n\rightarrow \infty} \mathbb{E}\left[X_{S_n}\right] 
				=&\limsup_{n\rightarrow \infty} \mathbb{E}\left[\int_{[0,\infty)}  i^{S_n}_t
								\mu(\mathrm{d} t)\right] \\
				\leq  &\ \mathbb{E}\left[\limsup_{n\rightarrow \infty}  
				\int_{[0,\infty)} g_t\left(\sup_{v\in\, [S,t]} L_v\right)\mathbb{1}_{[S_n,\infty)}(t)
							\mu(\mathrm{d} t)\right]\\
				=& \ \mathbb{E}\left[\int_{[S,\infty)} g_t\left(\sup_{v\in [S,t]} L_v\right)
							\mu(\mathrm{d} t)\right]
			=  \mathbb{E}\left[X_S\right].
			\end{align*}
			which we wanted to show.

               The next arguments are inspired by \citing{BF03}{Theorem 2}{6}.
		               As a first step we get by Lemma \ref{extension_lem_2} (ii), that left-upper-semi-continuity in expectation is equivalent to $\lsl{X}\leq {^\mathcal{P}X}$ up to an evanescent set. Therefore, for any $\tau=(T,H^-,H,H^+)\in \stmd$ the alternative divided stopping time $\tilde{\tau}:=(T,\emptyset,H^-\cup H,H^+)$ yields at least as high a value in~\eqref{eq:0} as $\tau$ does. Here, $\tilde{\tau}$ is indeed a divided stopping time as by $\WM$-completeness of $\Lambda$ and Theorem \ref{meyer_thm_4} there exists a filtration $\FA:=(\FA_t)_{t\geq 0}$ satisfying the usual conditions such that
				\begin{align}\label{Main:1213}
						\mathcal{P}(\FA)\subset \Lambda \subset \mathcal{O}(\FA).
				\end{align}
By Lemma \ref{extension_lem_1} we then have $\mathcal{P}(\aFA_+)
=\mathcal{P}(\FA)\subset \Lambda$, which implies that the predictable 
$\aFA_+$-stopping time $T_{H^-}$ is also a $\Lambda$-stopping time. This shows that $\tilde{\tau}\in \stmd$ as the other parts of the definition of a divided stopping times are satisfied by having $\tau\in \stmd$.			
									Hence, we can confine ourselves to considering divided stopping times with $H^-=\emptyset$. 
		               
		               Next, we show that $\tau_\ell^{(1)}\in \stmd$. To this end note that by $\{\sup_{v\in [0,\cdot)}L_v<\ell\}\in \mathcal{P}(\aFA_+)\subset \Lambda$ the set $A_\ell:=\{L\geq \ell\}\cap \{\sup_{v\in [0,\cdot)}L_v<\ell\}$ is $\Lambda$-measurable and satisfies for $\omega\in \Omega$ that
		               \[
		               	(T_\ell)_{\{L_{T_\ell^{(1)}}\geq \ell\}}(\omega)=\inf\{t\geq 0|(\omega,t)\in A_\ell\}.
		               \]
										From \citing{DM78}{Theorem 50}{116}, we obtain that $T_\ell^{(1)}$ is an $\FA$-stopping time and by \citing{EL80}{Theorem 2}{503}, it is a stability time. 					
		                As $A_\ell$ contains the graph of $(T_\ell^{(1)})_{\{L_{T_\ell^{(1)}}\geq \ell\}}$
		               we get by 
		               an application of \citing{EL80}{Corollary 2}{504}, that $(T_\ell^{(1)})_{\{L_{T_\ell^{(1)}}\geq \ell\}}$ is a $\Lambda$-stopping time. It follows that $\tau_\ell^{(1)}$ is indeed a divided stopping time. One proves analogously  that $\tau^{(2)}_\ell \in \stmd$.
		               
		               Now it remains to prove that $\tau_\ell^{(i)}$, $i=1,2$, attains the supremum in~\eqref{eq:0}. Let us first argue why the representation property \eqref{sto:frame_gl} allows us to conclude that for any $\tau=(T,\emptyset,H,H^c) \in \stmd$ we have
		               \begin{align}\label{Main:0}
		                   \mathbb{E}\left[X_\tau+\int_{[0,\tau)}g_t(\ell)\mu(\mathrm{d} t)\right]
		                   \leq &\ \mathbb{E}\left[\mathbb{1}_H\int_{[T,\infty)}\left(g_t\left(\sup_{v\in [T,t]}L_v\right)-g_t(\ell)\right)\mu(\mathrm{d} t)\right. \\
		                   &+ \left.\mathbb{1}_{H^c}\int_{(T,\infty)}\left(g_t\left(\sup_{v\in (T,t]}L_v\right)-g_t(\ell)\right)\mu(\mathrm{d} t)\right] +C
		               \end{align}
		               with $C:= \mathbb{E}\left[\int_{[0,\infty)}g_t(\ell)\mu(\mathrm{d} t)\right]$. Indeed, first we obtain by \eqref{sto:frame_gl} and 
		               because $H\in \aFA_T$ with $T_H\in \stm$ that
		               \begin{align}
		                    \mathbb{E}\left[\mathbb{1}_H X_T\right]=\mathbb{E}\left[\mathbb{1}_H\int_{[T,\infty)} g_t\left(\sup_{v\in [T,t]}L_v\right)\mu(\mathrm{d} t)\right].
		               \end{align}
		               Second, we obtain by Proposition \ref{extension_pro_1} (i), that there exists a sequence of $\Lambda$-stopping times
		               $(T_n)_{n\in \NZ}$ such that $T_n\geq T_{H^c}$, $\infty>T_n>T_{H^c}$ on $\{T_{H^c}<\infty\}$, $\lim_{n\rightarrow \infty} T_n=T_{H^c}$ and $X^\ast_{T_{H^c}}          =\lim_{n\rightarrow \infty} X_{T_n}$. Hence, again by \eqref{sto:frame_gl} and using that $X$ is of class($D^\Lambda$) with $X_\infty=X^\ast_\infty=0$, we get by Fatou's lemma that
		                \begin{align}\label{Main:2}
		                    \mathbb{E}\left[\mathbb{1}_{H^c} X_T^\ast\right]=\lim_{n\rightarrow\infty}\mathbb{E}[\mathbb{1}_{H^c}X_{T_n}]
		                    \leq \mathbb{E}\left[\mathbb{1}_{H^c}\int_{(T,\infty)} g_t\left(\sup_{v\in (T,t]}L_v\right)\mu(\mathrm{d} t)\right],
		               \end{align}
		               which finishes the proof of \eqref{Main:0}. 
		               
		               Choose $i\in \{1,2\}$ arbitrary. Now, one can use the monotonicity of $g$ to check that our divided level passage time $\tau_\ell^{(i)}$ of \eqref{Main:10} satisfies
		                \begin{align}
		                  &\mathbb{1}_H\int_{[T,\infty)}\left(g_t\left(\sup_{v\in [T,t]}L_v\right)-g_t(\ell)\right)\mu(\mathrm{d} t)\\&\quad+\mathbb{1}_{H^c}
		                  \int_{(T,\infty)}\left(g_t\left(\sup_{v\in (T,t]}L_v\right)-g_t(\ell)\right)\mu(\mathrm{d} t)\\
		                  &\leq \mathbb{1}_H\int_{[T,\infty)}\left(g_t\left(\sup_{v\in [T,t]}L_v\right)-g_t(\ell)\right)\vee 0\ \mu(\mathrm{d} t)\\&\quad+\mathbb{1}_{H^c}
		                  \int_{(T,\infty)}\left(g_t\left(\sup_{v\in (T,t]}L_v\right)-g_t(\ell)\right)\vee 0\ \mu(\mathrm{d} t)\\                  
		                  &\leq \mathbb{1}_{H^{(i)}_\ell}\int_{[T_\ell^{(i)},\infty)}\left(g_t\left(\sup_{v\in [T_\ell^{(i)},t]}L_v\right)-g_t(\ell)\right)\vee 0\, \mu(\mathrm{d} t) \\
		                  &\qquad +\mathbb{1}_{(H_\ell^{(i)})^c}\int_{(T_\ell^{(i)},\infty)}\left(g_t\left(\sup_{v\in (T_\ell^{(i)},t]}L_v\right)-g_t(\ell)\right)\vee 0\, \mu(\mathrm{d} t)
		               \end{align}
		               with equality in both steps for $\tau$ replaced by $\tau_\ell^{(i)}$.
		               For optimality of $\tau_{\ell}^{(i)}$, it thus suffices to show that we also have equality in \eqref{Main:0} for $\tau=\tau_\ell^{(i)}$. 
		               Recall that the inequality there is due to the application of Fatou's Lemma in \eqref{Main:2}. For $\tau=\tau_\ell^{(i)}$, the integrand is
		               bounded from below by $g(\ell)$ and from above by $g(\sup_{v\in [0,\cdot]}L_v)$, which justifies the application of dominated
		               convergence by assumptions on g and \eqref{sto:frame_gl_2}. This finishes our proof. 
            \end{proof}

%%%%%%%%%%%%%%%%%%%%%%%%%%%%%%%%%%%%%%%%%%%%%%%%%%%%%%%%%%%%%%%%%%%%%%
%%%%%%%%%%%%%%%%%%%%%%%%%%%%%%%%%%%%%%%%%%%%%%%%%%%%%%%%%%%%%%%%%%%%%%

\bibliographystyle{plainnat} \bibliography{Literature_ALL}

\end{document}